	\def\MR#1{}
\newcommand{\kk}{\mathbb{k}}
\newcommand{\KK}{\mathbb{K}}
\newcommand{\bFF}{\mathbb{F}}
\newcommand{\CC}{\mathbb{C}}
\newcommand{\NN}{\normalfont\mathbb{N}}
\newcommand{\ZZ}{\mathbb{Z}}
\newcommand{\MM}{{\normalfont\mathfrak{M}}}
\newcommand{\xx}{{\normalfont\mathbf{x}}}
\newcommand{\dd}{\mathbb{d}}
\newcommand{\mm}{{\normalfont\mathfrak{m}}}
\newcommand{\init}{{\normalfont\text{in}}}
\newcommand{\QQ}{\mathbb{Q}}
\newcommand{\pp}{{\normalfont\mathfrak{p}}}
\newcommand{\aaa}{\mathfrak{a}}
\newcommand{\bbb}{\mathfrak{b}}
\newcommand{\bn}{{\normalfont\mathbf{n}}}
\newcommand{\bm}{{\normalfont\mathbf{m}}}
\newcommand{\nnn}{{\normalfont\mathfrak{n}}}
\newcommand{\fJ}{{\mathfrak{J}}}
\newcommand{\Ker}{\normalfont\text{Ker}}
\newcommand{\Quot}{\normalfont\text{Quot}}
\newcommand{\Syz}{\normalfont\text{Syz}}
\newcommand{\Ass}{{\normalfont\text{Ass}}}
\newcommand{\Sym}{\normalfont\text{Sym}}
\newcommand{\Hom}{\normalfont\text{Hom}}
\newcommand{\LL}{\mathbb{L}}
\newcommand{\HL}{\normalfont\text{H}_{\mm}}
\newcommand{\HH}{\normalfont\text{H}}
\newcommand{\iniTerm}{\normalfont\text{in}}
\newcommand{\bideg}{\normalfont\text{bideg}}
\newcommand{\Spec}{\normalfont\text{Spec}}
\newcommand{\Diff}{{\normalfont\text{Diff}}}
\newcommand{\Princ}{P_{R/A}}
\newcommand{\MaxSpec}{\normalfont\text{MaxSpec}}
\DeclareMathOperator{\lc}{H}
\DeclareMathOperator{\Der}{Der}
\DeclareMathOperator{\gr}{gr}
\DeclareMathOperator{\IM}{Im}
\newcommand{\fps}{{\mathbb {f}^s}}
\def\f0{\mathbf{0}}
\def\1{\mathbf{1}}
\newtheorem*{blanktheorem}{Theorem}
\newtheorem*{GFL}{Generic Freeness Lemma}
\newtheorem{theorem}{Theorem}[section]
\newtheorem{headthm}{Theorem}
\newaliascnt{headcor}{headthm}
\newaliascnt{headconj}{headthm}
\newaliascnt{corollary}{theorem}
\newtheorem{corollary}[corollary]{Corollary}
\newaliascnt{claim}{theorem}
\newaliascnt{lemma}{theorem}
\newtheorem{lemma}[lemma]{Lemma}
\newaliascnt{conjecture}{theorem}
\newaliascnt{proposition}{theorem}
\newtheorem{proposition}[proposition]{Proposition}
\theoremstyle{definition}
\newaliascnt{definition}{theorem}
\newtheorem{definition}[definition]{Definition}
\newaliascnt{notation}{theorem}
\newaliascnt{example}{theorem}
\newaliascnt{examples}{theorem}
\newaliascnt{remark}{theorem}
\newtheorem{remark}[remark]{Remark}
\newaliascnt{question}{theorem}
\newtheorem{question}[question]{Question}
\newaliascnt{questions}{theorem}
\newaliascnt{problem}{theorem}
\newaliascnt{construction}{theorem}
\newaliascnt{setup}{theorem}
\newtheorem{setup}[setup]{Setup}
\newaliascnt{setupdef}{theorem}
\newtheorem{setupdef}[setupdef]{Setup-Definition}
\newaliascnt{algorithm}{theorem}
\newaliascnt{observation}{theorem}
\newtheorem{observation}[observation]{Observation}
\newaliascnt{defprop}{theorem}
\def\equationautorefname~#1\null{(#1)\null}
\def\sectionautorefname~#1\null{Section #1\null}
\def\subsectionautorefname~#1\null{\S #1\null}
\def\surjects{\twoheadrightarrow}
\title{Effective generic freeness and applications to local cohomology}
\author{Yairon Cid-Ruiz}
\address[Cid-Ruiz]{Department of Mathematics, North Carolina State University, Raleigh, NC 27695, USA}
\email{ycidrui@ncsu.edu}
\author{Ilya Smirnov}
\address[Smirnov]{BCAM -- Basque Center for Applied Mathematics, Bilbao, Spain \quad and \quad IKERBASQUE, Basque Foundation for Science, Bilbao, Spain}
\email{ismirnov@bcamath.org}
\date{\today}
\keywords{generic freeness,  local cohomology, Gr\"obner bases, effective computations, differential operators, $F$-modules, smooth algebras, \'etale algebras, Bernstein--Sato polynomials}
\subjclass[2020]{13P10, 13D45, 13C10, 13B40}
\begin{document}

	\maketitle

	\begin{abstract}
		Let $A$ be a Noetherian domain and $R$ be a finitely generated $A$-algebra.
		We study several features regarding the generic freeness over $A$ of an $R$-module.
		For an ideal $I \subset R$, we show that the local cohomology modules $\HH_I^i(R)$ are generically free over $A$ under certain settings where $R$ is a smooth $A$-algebra.
		By utilizing the theory of Gr\"obner bases over arbitrary Noetherian rings, we provide an effective method to make explicit the generic freeness over $A$ of a finitely generated $R$-module. 
	\end{abstract}

	\section{Introduction}
	
	Our starting point is the following classical result of Grothendieck: 

\begin{GFL}
Let $A$ be a Noetherian domain, $R$ be a finitely generated $A$-algebra, and $M$ be a finitely generated $R$-module. 
There is a nonzero element $a \in A$ such that $M \otimes_A A_a$ is a free $A_a$-module.
\end{GFL}

	
%
\noindent
	The main goal of this paper is to extend the above result in the following two directions: 
	\begin{enumerate}[\rm (a)]
		\item\label{goal_a} We extend the above result when $M = \HH_I^i(R)$ is a local cohomology module with support on an ideal $I \subset R$, $A$ contains a field, and $R$ is a smooth $A$-algebra (recall that local cohomology modules are typically not finitely generated).
		\smallskip
		\item\label{goal_b} 
		We give an effective and computable method to choose a specific element $a \in A$ in the above result.
	\end{enumerate}

	\noindent
	The generic freeness of local cohomology modules is an important problem that has been addressed in various contexts by Hochster and Roberts  \cite[Theorem 3.4]{HOCHSTER_ROBERTS_PURITY}, by Koll\'ar \cite[Theorem 78]{kollar2014maps}, and by Smith \cite{KSMITH}. 
	Also, see the recent papers \cite{GEN_FREENESS_LOC_COHOM, FIBER_FULL}.
	
	We divide the Introduction into two subsections addressing these two goals.

	\subsection*{Effective generic freeness}
	Our primary tool to address  \hyperref[goal_b]{Goal~(b)} is the theory of Gr\"obner bases, where we use $A$ as the coefficient ring. 
	The study of Gr\"obner bases over an arbitrary Noetherian coefficient ring (and not just a field) is a well-established and useful technique (see \cite[Chapter 4]{ADAMS_LOUST_GROEBNER}, \cite{VASCONCELOS_FLATNESS_EFF}, \cite{kalkbrener1998algorithmic}, \cite{miller1996analogs}, \cite{HAIMAN_STURMFELS}).
	
	Suppose that $R = A[x_1,\ldots,x_r]$ is a polynomial ring over $A$ and $>$ is a monomial order on $R$ (see \autoref{sect_Grobner} for more details).
	Vasconcelos~\cite{VASCONCELOS_FLATNESS_EFF} noticed that an effective method of determining generic freeness is to compute an initial ideal and then to make invertible the leading coefficients obtained in $A$. A similar idea was later used by Derksen and Kemper in \cite{DerkKemp}.
	
	\begin{blanktheorem}[Vasconcelos \cite{VASCONCELOS_FLATNESS_EFF}]
		Let $I \subset R$ be an ideal and $\init_>(I) = \big(a_1\xx^{\beta_1},\, a_2\xx^{\beta_2}, \,\ldots\,, a_m\xx^{\beta_m} \big)$ be its corresponding initial ideal, where $0\neq a_i \in A$ and $\beta_i \in \NN^r$.
		Choose $0 \neq a \in (a_1) \cap \cdots \cap (a_b) \subset A$.
		Then $R/I \otimes_A A_a$ is a free $A_a$-module. 
	\end{blanktheorem}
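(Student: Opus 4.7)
The plan is to reduce to the classical field-style Gröbner basis argument by exploiting the fact that, after inverting $a$, every leading coefficient $a_i$ becomes a unit in $A_a$.

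First, by definition of $\init_>(I)$ as the ideal generated by all leading terms of elements of $I$, we may pick elements $g_1,\dots,g_m \in I$ with $\init_>(g_i) = a_i\xx^{\beta_i}$; these $g_i$ form a Gröbner basis of $I$. Passing to $R_a := R \otimes_A A_a = A_a[x_1,\dots,x_r]$, every $a_i$ becomes a unit, so after rescaling we may assume each $g_i$ has monic leading term $\xx^{\beta_i}$. Consequently, $\init_>(I)\cdot R_a = \bigl(\xx^{\beta_1},\dots,\xx^{\beta_m}\bigr)$ is a genuine monomial ideal in $R_a$.

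Next, I would run the multivariate division algorithm in $R_a$ with respect to $g_1,\dots,g_m$: at each step, the current leading term $c\,\xx^\alpha$ of a nonzero remainder is either divisible by some $\xx^{\beta_i}$, in which case we subtract $c\,\xx^{\alpha-\beta_i}\,g_i$ (which is legitimate precisely because the leading coefficient $1$ of $g_i$ is a unit in $A_a$), or else $\xx^\alpha$ is a standard monomial, in which case we isolate the term and continue on the tail. Because $>$ is a well-ordering on monomials, the process terminates and expresses every $f \in R_a$ as an element of $I\cdot R_a$ plus an $A_a$-linear combination of the standard monomials $\{\xx^\alpha : \xx^{\beta_i} \nmid \xx^\alpha \text{ for all } i\}$. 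This shows the standard monomials span $R_a/(I\cdot R_a) \cong R/I \otimes_A A_a$ as an $A_a$-module.

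For linear independence, suppose $\sum_\alpha c_\alpha \xx^\alpha \in I \cdot R_a$ is a nontrivial $A_a$-combination of standard monomials. Its leading term is $c_{\alpha_0}\xx^{\alpha_0}$ for the largest $\alpha_0$ with $c_{\alpha_0}\neq 0$, hence lies in $\init_>(I\cdot R_a) = (\xx^{\beta_1},\dots,\xx^{\beta_m})$; but then $\xx^{\alpha_0}$ is divisible by some $\xx^{\beta_i}$, contradicting that it is a standard monomial. Thus $R/I\otimes_A A_a$ is free on the standard monomials, as desired.

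The only genuinely delicate point is ensuring the division algorithm terminates and that $\init_>(I\cdot R_a)$ equals $(\init_>(I))\cdot R_a$; both follow formally because localization commutes with taking leading terms and because the well-ordering of $>$ on $\NN^r$ is preserved on base change. Everything else is a direct transcription of the familiar field-coefficient Gröbner basis proof that standard monomials form a basis of the quotient.
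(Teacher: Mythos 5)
Your proposal is correct and follows essentially the same route as the paper's proof of (the module version of) this statement in \autoref{thm_gen_grob}(i): localize at $a$ so that the initial ideal becomes a genuine monomial ideal generated by the $\xx^{\beta_i}$, then use the division/reduction algorithm against a Gröbner basis with those (now monic) leading terms to express any class in $R/I \otimes_A A_a$ uniquely as an $A_a$-combination of standard monomials. The only cosmetic difference is that the paper picks its Gröbner basis elements after passing to $A_a$ and phrases the reduction by iteratively eliminating the largest non-standard term rather than invoking the division algorithm wholesale, but the two formulations are equivalent.
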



	In \autoref{thm_gen_grob}, we extend the above result to the case of modules, and we provide \emph{generic versions}  (extended to our setting over the coefficient ring $A$) of two important results: \emph{Macaulay's theorem} and \emph{flat degenerations to the initial ideal/module}.
	As a consequence of this result, we obtain new effective proofs for Grothendieck's generic freeness lemma (see \autoref{cor: gen free}) and for a stronger version by Hochster and Roberts (see \autoref{prop_hochster_roberts}).

	Suppose now that $R = A[x_1,\ldots,x_r]$ is a positively graded polynomial ring over $A$ and $\mm = (x_1,\ldots,x_r) \subset R$ is the graded irrelevant ideal.
	We also provide an effective method of determining generic freeness for the local cohomology modules $\HL^i(R/I)$, under the assumption that the corresponding initial ideal is ``square-free'' over the coefficient ring $A$. 
	This generic freeness result is inspired by the work of Conca and Varbaro \cite{CONCA_VARBARO} on square-free Gr\"obner degenerations.

	\begin{headthm}[\autoref{thm_sqr_free_deg}]
		\label{thmA}
		Assume that $A$ contains a field $\kk$.
		Let $I \subset R$ be a homogeneous ideal and
		$\iniTerm_>(I) = \big(a_1\xx^{\beta_1}, \, a_2\xx^{\beta_2},\, \ldots\,, a_b\xx^{\beta_b} \big)$ be its corresponding initial ideal, where $0\neq a_i \in A$ and $\beta_i \in \NN^r$.
		Suppose that each monomial $\xx^{\beta_i}$ is square-free {\rm(}i.e., $\beta_i = (\beta_{i,1}, \ldots,\beta_{i,r}) \in \NN^r$ with $\beta_{i,j} \le 1${\rm)}.
		Choose $0 \neq a \in (a_1) \cap \cdots \cap (a_b) \subset A$.
		Then $\HL^i(R/I) \otimes_{A} A_a$ is $A_a$-free for all $i \ge 0$.
	\end{headthm}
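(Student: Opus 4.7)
The plan is to use the flat Gröbner degeneration from \autoref{thm_gen_grob} to reduce to a Stanley--Reisner quotient, and then to combine a fiberwise application of the Conca--Varbaro theorem on squarefree Gröbner degenerations with Hochster's formula for the local cohomology of Stanley--Reisner rings.

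After inverting $a$, each leading coefficient $a_i$ becomes a unit of $A_a$, so the initial ideal is genuinely a squarefree monomial ideal $J_a := (\xx^{\beta_1}, \ldots, \xx^{\beta_b}) \subset R_a = A_a[x_1,\ldots,x_r]$. By \autoref{thm_gen_grob}, there is a homogeneous $A_a[t]$-flat degeneration of $R_a/I_a$ to the Stanley--Reisner quotient $R_a/J_a$; in particular each graded piece of $R_a/I_a$ is a finitely generated free $A_a$-module. Because $A_a$ is a $\kk$-algebra and $J_a$ is squarefree, Hochster's combinatorial formula for the local cohomology of a Stanley--Reisner ring extends from $\kk$ to $A_a$ by scalar extension, showing that each graded piece $\HL^i(R_a/J_a)_j$ is a free $A_a$-module of finite rank, and that this computation commutes with arbitrary base change along $A_a \to B$.

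For each $\pp \in \Spec(A_a)$, the specialization $R/I \otimes_{A_a} \kappa(\pp)$ is a homogeneous ideal in $\kappa(\pp)[x_1,\ldots,x_r]$ whose initial ideal remains the squarefree monomial ideal $J_a \otimes_{A_a} \kappa(\pp)$. Applying the Conca--Varbaro theorem over the field $\kappa(\pp)$ then yields
\[
\dim_{\kappa(\pp)} \HL^i\bigl(R/I \otimes_{A_a} \kappa(\pp)\bigr)_j \;=\; \dim_{\kappa(\pp)} \HL^i\bigl(R/J \otimes_{A_a} \kappa(\pp)\bigr)_j \;=\; \rank_{A_a} \HL^i(R_a/J_a)_j,
\]
a number independent of $\pp$.

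Finally, the degree-wise $A_a$-freeness of $R_a/I_a$ propagates, as a direct limit of free modules in each fixed graded degree, to degree-wise $A_a$-flatness of the terms of the graded Čech complex on $x_1, \ldots, x_r$ applied to $R_a/I_a$. A hyperhomology spectral sequence argument then identifies $\HL^i(R/I \otimes_{A_a} \kappa(\pp))_j$ with $\HL^i(R_a/I_a)_j \otimes_{A_a} \kappa(\pp)$, so the displayed equality becomes the statement that the fiber dimension of each $\HL^i(R_a/I_a)_j$ is constant on $\Spec(A_a)$. Combined with finite generation and flatness, constant fiber dimension forces $A_a$-freeness, yielding the theorem. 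The most delicate step is this last one: verifying that the graded Čech cohomology genuinely commutes with base change to residue fields of $A_a$, so that one can transfer the Conca--Varbaro fiberwise dimension equality into a freeness statement for $\HL^i(R/I) \otimes_A A_a$.
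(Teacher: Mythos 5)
Your proposal takes a genuinely different route from the paper's, so let me first compare, then point out where yours has gaps.

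The paper keeps the degeneration parameter $t$ in sight the whole way through: it works over $B = A_a[t]$, uses the Conca--Varbaro cohomological fullness of $\overline{R}/\aaa$ and $\overline{R}/\iniTerm(\aaa)$ fiberwise to verify the surjectivity criterion (iii) of \autoref{thm_fib_full_mod}, concludes that $S/J$ is fiber-full over $B$ (hence $\HL^i(S/J)$ is $B$-flat and commutes with arbitrary base change), then upgrades $B$-flatness to $B$-\emph{freeness} via Bass's theorem (not finitely generated case) and a graded Nakayama argument using the exact sequence $0 \to M(-1) \xrightarrow{\cdot t} M \to N \to 0$ with $N$ $A_a$-free, and only at the very end specializes $t=1$. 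You instead try to compute directly over $A_a$: compare fiber dimensions of $\HL^i(R/I \otimes \kappa(\pp))$ against those of $\HL^i(R/J)$ via Conca--Varbaro in each fiber, and deduce local freeness from constant fiber dimension. If it worked this would be more direct and avoid the fiber-full formalism entirely. But there are two genuine gaps.

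First, the crucial base change step is circular as written. You assert that ``a hyperhomology spectral sequence argument identifies $\HL^i(R/I \otimes_{A_a} \kappa(\pp))_j$ with $\HL^i(R_a/I_a)_j \otimes_{A_a} \kappa(\pp)$.'' For the graded \v{C}ech complex $C^\bullet_j$ (a bounded complex of flat $A_a$-modules in each degree $j$), the hyperhomology spectral sequence has $E_2$-page $\Tor^{A_a}_{-p}(\HH^q(C^\bullet_j), \kappa(\pp)) \Rightarrow \HH^{p+q}(C^\bullet_j \otimes_{A_a} \kappa(\pp))$. This degenerates to the isomorphism you want precisely when $\HH^q(C^\bullet_j) = \HL^q(R_a/I_a)_j$ is flat over $A_a$ --- which is what you are trying to prove. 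To get the base change identification from the constancy of fiber dimensions you would instead need a bona fide Grothendieck/Mumford cohomology-and-base-change/Grauert argument (a bounded complex of flat modules with finitely generated cohomology and constant fiber cohomology dimension has flat, base-change-compatible cohomology). That machinery exists but is not what a hyperhomology spectral sequence gives you by itself; the paper sidesteps this entirely by invoking the surjectivity criterion of \autoref{thm_fib_full_mod}, which is the packaged form of exactly this argument.

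Second, even granting the base change identification, your final step overshoots. Constant fiber dimension plus finite generation of each graded piece gives that $\HL^i(R_a/I_a)_j$ is finitely generated and flat, i.e., projective, over $A_a$; it does not give free. Over a general Noetherian domain there are finitely generated projective modules that are not free (e.g., non-principal ideals in a number ring), and nothing in your $A_a$ forbids this. The paper's route through $B = A_a[t]$ is what fixes this: a finitely generated projective graded $B$-module $M$ sitting in a sequence $0 \to M(-1) \to M \to N \to 0$ with $N$ $A_a$-free is forced to be $B$-free by graded Nakayama, and the non-finitely-generated case is handled by Bass; then tensoring a free $B$-module by $B/(t-1) \cong A_a$ preserves freeness. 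Without the extra $t$-variable this reduction is unavailable, and the freeness conclusion does not follow from your argument.
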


	The assumption that $A$ contains a field cannot be dropped in \autoref{thmA} (see \autoref{rem_triang_PP_2}).
	In \autoref{sect_determinant}, we apply these results to study certain specializations of determinantal ideals.

	\subsection*{Applications to local cohomology}
	We now describe our contributions towards \hyperref[goal_a]{Goal (a)}.
	To the best of our knowledge, the previous most comprehensive result regarding the generic freeness of local cohomology modules is the following:

	\begin{blanktheorem}[Smith \cite{KSMITH}]
		Let $I \subset R$ be an ideal such that \emph{$R/I$ is a finitely generated $A$-module}. 
		Then there is a nonzero element $a \in A$ such that $\HH_I^i(M) \otimes_A A_a$ is a free $A_a$-module for all $i \ge 0$.
	\end{blanktheorem}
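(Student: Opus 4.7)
The first key input is that finiteness of $R/I$ over $A$ propagates to all powers of $I$. Since $R$ is Noetherian and $I$ is finitely generated, each successive quotient $I^j/I^{j+1}$ is a finitely generated $R/I$-module, hence a finite $A$-module; by induction on $n$ every $R/I^n$ is a finitely generated $A$-module. Consequently, for the finitely generated $R$-module $M$, each Ext module $\Ext_R^i(R/I^n, M)$ is annihilated by $I^n$ and is therefore a finitely generated $R/I^n$-module, in particular a finite $A$-module. Writing
\[
\HH_I^i(M) \;=\; \varinjlim_n \Ext_R^i(R/I^n, M)
\]
exhibits the local cohomology as a directed colimit of finite $A$-modules, each of which is generically $A$-free by Grothendieck's generic freeness lemma. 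The difficulty is that this gives one element $a_n \in A$ per $n$ and per $i$, not a uniform choice.

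To produce a uniform $a \in A$, I would reduce to a polynomial-ring situation via Noether normalization. After inverting some nonzero $a_0 \in A$, one can find $y_1,\ldots,y_d \in I$ such that $S := A_{a_0}[y_1,\ldots,y_d]\hookrightarrow R_{a_0}$ is a polynomial subring, $R_{a_0}$ is module-finite over $S$, and $\sqrt{I R_{a_0}} = \sqrt{(y_1,\ldots,y_d) R_{a_0}}$. Such $y_i$ exist because the generic fiber of $\Spec R \to \Spec A$ contains $V(I)$ as a zero-dimensional subscheme: so $I R_{\mathrm{Frac}(A)}$ has height equal to $d := \dim R_{\mathrm{Frac}(A)}$, and a standard combination of Noether normalization and prime avoidance over the generic fiber, cleared of denominators, provides the required $y_1,\ldots,y_d$ after possibly enlarging $a_0$. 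By flat base change for local cohomology together with independence of generators and invariance of local cohomology under finite ring extensions,
\[
\HH_I^i(M) \otimes_A A_{a_0} \;=\; \HH_{(y_1,\ldots,y_d) S}^i(M_{a_0}),
\]
where $M_{a_0}$ is a finitely generated module over the polynomial ring $S$.

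The main obstacle is the remaining claim: for a finitely generated module $N$ over the polynomial ring $S = A[y_1,\ldots,y_d]$, the local cohomology modules $\HH_{(y_1,\ldots,y_d)}^i(N)$ are generically $A$-free. I would establish this by building, after inverting a further nonzero element of $A$, a bounded resolution of $N$ by finitely generated free $S$-modules, using Grothendieck's generic freeness inductively to make each successive syzygy $A$-free (hence free as an $S$-module after localization), and then computing $\HH_{(y)}^i(N)$ from this resolution via the explicit and manifestly $A$-free description of the local cohomology of the polynomial ring itself ($\HH_{(y)}^i(S)=0$ for $i<d$, and $\HH_{(y)}^d(S)$ is the free $A$-module of Laurent tails in the $y_j$). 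An alternative route, closer in spirit to the methods of this paper, is to apply the Gr\"obner-basis-over-$A$ machinery (analogous to \autoref{thmA}) directly to $N$ viewed as an $S$-module. Either approach produces a further element of $A_{a_0}$ to invert; multiplying it with $a_0$ yields a single $a \in A$ for which $\HH_I^i(M)\otimes_A A_a$ is $A_a$-free for every $i\ge 0$.
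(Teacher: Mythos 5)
First, a point of orientation: the paper does not prove this statement at all --- it is quoted as a known result of Smith \cite{KSMITH} --- so your argument has to stand on its own, and it does not: the Noether-normalization reduction is not merely under-justified but impossible in general. You ask, after inverting a single $a_0\in A$, for elements $y_1,\ldots,y_d\in I$ that are algebraically independent over $A$, make $R_{a_0}$ module-finite over $S=A_{a_0}[y_1,\ldots,y_d]$, and satisfy $\sqrt{(y_1,\ldots,y_d)R_{a_0}}=\sqrt{IR_{a_0}}$. Module-finiteness forces $d=\dim R_K$ with $K=\Quot(A)$, so on the generic fiber you are demanding that the finite set $V(IR_K)$ be a set-theoretic complete intersection of $d$ elements of $IR_K$ (which moreover define a finite map to $\mathbb{A}^d_K$), and zero-dimensional closed subschemes of a $d$-dimensional affine variety need not be set-theoretic complete intersections. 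Concretely, let $E$ be an elliptic curve over $\CC$ with origin $O$, let $R_0$ be the coordinate ring of $E\setminus\{O\}$, let $p$ be a point with $[p]-[O]$ non-torsion, and put $A=\CC[t]$, $R=R_0\otimes_{\CC}A$, $I=\mathfrak{m}_pR$. Then $R/I\cong A$ is finite over $A$ and $d=1$, but for no $0\neq a_0\in A$ is there $y\in R_{a_0}$ with $\sqrt{(y)R_{a_0}}=\sqrt{IR_{a_0}}$: over $K=\CC(t)$ such a $y$ would be a nonconstant function on $E_K$, regular away from $O$ and vanishing set-theoretically exactly at $p$, so $n([p]-[O])$ would be principal for some $n\ge 1$, contradicting non-torsionness. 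Hence the displayed identification $\HH_I^i(M)\otimes_AA_{a_0}=\HH_{(y_1,\ldots,y_d)S}^i(M_{a_0})$ cannot be arranged by your route, and no amount of prime avoidance or clearing of denominators repairs this. (Your height claim for $IR_K$ also tacitly assumes equidimensionality, but that is minor by comparison.)

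The final step has problems of its own, even where the reduction is available. The parenthetical ``$A$-free (hence free as an $S$-module after localization)'' is false: an $S$-module that is free over $A$ need not be free over $S$ --- already the syzygy $(y_1,y_2)\subset\kk[y_1,y_2]$ of the residue field is $\kk$-free but not $S$-free. One can obtain a bounded resolution by finitely generated projective $S$-modules generically (via $\pd_S N\le \pd_A N+d$ once $N$ has been made $A$-free), but then $\HH_{(y)}^i(N)$ is the homology of $\HH_{(y)}^d$ applied to that resolution, a complex of $A$-free modules of infinite rank, and generic freeness does not pass to homology of such complexes automatically; since $M_{a_0}$ carries no grading over $S$, neither \autoref{thmA} nor \autoref{cor_gen_free_grad} applies off the shelf, and this unaddressed step is precisely the substance of Smith's theorem. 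Note also that the reduction which does work uses the hypothesis differently and needs no localization: because $R/I$ is module-finite over $A$, one can choose finitely many elements of $I$ that generate $I$ as an ideal and generate an $A$-subalgebra $T\subseteq R$ over which $R$ is module-finite (correct algebra generators $r_j$ of $R$ by the relations $r_ir_j-\sum_l a_{ijl}r_l\in I$ furnished by module generators of $R/I$, and add ideal generators of $I$); a surjection from a polynomial ring onto $T$ then reduces the statement, on the nose, to a finitely generated (ungraded) module over $A[x_1,\ldots,x_n]$ supported in $(x_1,\ldots,x_n)$ --- which is exactly the case your outline leaves open.
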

	
	\noindent
	A typical example where the above result applies is when $R = A[x_1,\ldots,x_r]$ is a polynomial ring and $I = (x_1,\ldots,x_r) \subset R$ (e.g., as in \autoref{thmA}).
	If we drop the assumption that $\Spec(R/I) \rightarrow \Spec(A)$ is a finite morphism, then there are known examples where one does \emph{not} have generic freeness for local cohomology modules (see \autoref{rem_Katzman}).
	
	In the following theorem, for any ideal $I \subset R$, we settle the generic freeness of $\HH_I^i(R)$ under certain assumptions that include the smoothness of the morphism $\Spec(R) \rightarrow \Spec(A)$.
	
	\begin{headthm}[\autoref{thm_local_cohom}]
		\label{thmB}
		Assume that $A$ contains a field $\kk$ and $R$ is a smooth $A$-algebra. 
		Suppose one of the following two conditions: 
		\begin{enumerate}[\rm (a)]
			\item $\kk$ is a field of characteristic zero, or
			\item $\kk$ is a field of positive characteristic and the regular locus ${\rm Reg}(A) \subset \Spec(A)$ contains a nonempty open subset.
		\end{enumerate}
		Then, for any ideal $I \subset R$, there is a nonzero element $a \in A$ such that $\HH_I^i(R) \otimes_A A_a$ is a free $A_a$-module for all $i \ge 0$. 
	\end{headthm}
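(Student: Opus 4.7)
The strategy I would follow is to equip $\HH_I^i(R)$ with an additional algebraic structure over which it becomes finitely generated, and then to appeal to the Generic Freeness Lemma (together with the effective Gröbner-style versions developed earlier in the paper). The two hypotheses on the characteristic of $\kk$ strongly suggest splitting the argument: in characteristic zero one uses the ring $D := \Diff_{R/A}$ of $A$-linear differential operators, and in positive characteristic one uses Frobenius together with Lyubeznik's theory of $F$-modules. In either case, the \v{C}ech complex exhibits $\HH_I^i(R)$ as a subquotient of a finite direct sum of localizations $R_f$, so the task reduces to a controlled analysis of each $R_f$ carrying a compatible finite-type structure.

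In the characteristic zero case, smoothness of $R/A$ makes $D$ a well-filtered ring with $R$-coherent order pieces $\{D^{\le n}\}$. The idea is to invoke a relative version of Bernstein--Sato theory: after inverting a nonzero element $a \in A$, there exists a functional equation $b_f(s)\, f^s = P(s)\, f^{s+1}$ with $b_f(s) \in A_a[s]$ and $P(s) \in D_a[s]$, which presents $R_f$ as $D_a \cdot f^{-N}$ for some $N$, filtered by the finitely generated $R_a$-modules $D^{\le n}_a \cdot f^{-N}$. Each successive quotient is a finitely generated $R_a$-module, so the Generic Freeness Lemma produces generic freeness at each step. One must then argue that only finitely many successive generic localizations are needed; this should follow from a Noetherian-type stabilisation of the associated graded module over the generic fibre $\mathrm{Frac}(A) \otimes_A R$.

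In positive characteristic, the smoothness of $R$ over $A$ together with the assumption that $\mathrm{Reg}(A)$ contains a nonempty open subset ensures that, after inverting some $a \in A$, the ring $R_a$ is regular, so by Kunz's theorem the Frobenius $F \colon R_a \to R_a$ is flat. Lyubeznik's theory then presents $\HH_I^i(R)$ as a direct limit $\varinjlim_n F^{n*} M$ of Frobenius pullbacks of a finitely generated $R$-module $M$ equipped with an injective structural morphism $\beta \colon M \to F_* M$. Applying Grothendieck's Lemma to $M$, after a further localization we may assume $M$ is $A_a$-free; flatness of $F$ over $A_a$ propagates freeness to each $F^{n*} M$, and directed colimits preserve freeness over $A_a$, giving the conclusion.

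The main obstacle in both regimes is the \emph{passage from $R$-finite to $A$-uniform control}: finiteness of $\HH_I^i(R)$ holds only over the enlarged ring $D$ (respectively, the Frobenius-equivariant ring), and converting that finiteness into a single generic localization $A_a$ valid for all graded pieces (respectively, all Frobenius iterates) is the delicate point. In characteristic zero this amounts to stabilising the Bernstein--Sato filtration after inverting finitely many elements; in positive characteristic it requires that a single $a \in A$ simultaneously witnesses Kunz's theorem, Lyubeznik's structural map, and the generic freeness of the underlying finitely generated $R$-module $M$.
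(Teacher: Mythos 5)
Your high-level strategy matches the paper's: use the \v{C}ech complex with an enriched module structure ($D$-modules in characteristic $0$, Lyubeznik's $F$-modules in characteristic $p$), reduce to generic freeness over the enriched ring, and treat the two characteristics separately. However, in both cases there are technical gaps at precisely the point you flag as delicate, and in the positive-characteristic case the gap is essential.

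In characteristic zero you correctly identify that a relative Bernstein--Sato functional equation presents $R_f$ as a cyclic $D_{R/A}$-module, and that one then has to control all of the order-filtration pieces with a single localization. But your suggested fix, ``Noetherian-type stabilisation of the associated graded module over the generic fibre $\operatorname{Frac}(A)\otimes_A R$,'' is not how one closes the gap. The clean argument is: the order filtration on a finitely generated left $D_{R/A}$-module $M$ has associated graded $\gr_D(M)$ that is a \emph{finitely generated graded module over the commutative finitely generated $A$-algebra} $\gr(D_{R/A})\cong\Sym_R(\Der_{R/A})$. The graded form of Grothendieck's lemma (\autoref{cor_gen_free_grad}) then furnishes \emph{one} element $a\in A$ making every graded piece of $\gr_D(M)\otimes_A A_a$ simultaneously $A_a$-free, so all the short exact sequences $0\to M_{i-1}\to M_i\to M_i/M_{i-1}\to 0$ split after localizing once (this is \autoref{thm: noncom gen freeness}). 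No passage to $\operatorname{Frac}(A)$ or stabilization argument is needed or relevant.

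In positive characteristic the gap is more serious. You write that ``flatness of $F$ over $A_a$ propagates freeness to each $F^{n*}M$.'' Kunz gives flatness of $F_R$ as an endomorphism of $R$, but it says nothing directly about the $A$-module structure: $F_R(-)=R'\otimes_R(-)$ with the $A$-action coming from the left factor of $R'$, and $F$ does not commute with the structure map $A\to R$, so $F_R(-)$ does \emph{not} commute with base change along $A\to B$ (that would involve the Frobenius of $B$ as well). There is no a priori reason that a single $a$ witnessing $A_a$-freeness of $M$ also witnesses freeness of $F_R^e(M)$ for all $e$, and this is exactly the uniformity the paper must establish. The actual proof first produces a single $a$ so that $M\otimes_A A_a$ and $F/N^{[p^e]}\otimes_A A_a$ are $A_a$-free \emph{simultaneously for all $e\geq 0$}, where $Q=\operatorname{coker}(M\to F_R(M))\cong F/N$; this uses the Buchberger-type result that Frobenius powers of a Gr\"obner basis remain a Gr\"obner basis after one localization (\autoref{cor_Grob_Frob_pow}), a genuinely nontrivial ingredient you have omitted. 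Only then does induction on the short exact sequences $0\to F_R^e(M)\to F_R^{e+1}(M)\to F/N^{[p^e]}\to 0$, which split after localizing, give that each $F_R^e(M)\otimes_A A_a$ is free. Finally, your last step ``directed colimits preserve freeness over $A_a$'' is false in general (directed colimits preserve flatness, not freeness); what saves the argument is that the transition maps split with free cokernels, so the colimit is a direct sum of free $A_a$-modules.
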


	Notice that \autoref{thmB} is applicable in the particular case when $A$ is an algebra finitely generated over any field $\kk$ (see \autoref{rem_reg_locus}). 
	In light of \autoref{thmB}, it is natural to ask about generic freeness of local cohomology in more general settings.
	Due to \cite[Example 3.3]{BBLSZ}, there are smooth algebras over $\ZZ$ where the local cohomology $\HH_I^i(R)$ is generically flat but not generically free over $\ZZ$.
	Therefore we should ask the following question:
	
	\begin{question}
		Suppose that $R$ is a smooth $A$-algebra.
		Let $I \subset R$ be an ideal and $i \ge 0$.
		Does there exist a nonzero element $0 \neq a \in A$ such that $\HH_I^i(R) \otimes_{A} A_a$ is a flat $A_a$-module?
	\end{question}

	\noindent
	We have a partial answer when $A = \ZZ$: due to \cite{BBLSZ} we know that $\HH_I^i(R)$ has finitely many associated primes, and hence by inverting a suitable element $0 \neq a \in \ZZ$, we can make $\HH_I^i(R)$ torsion-free and thus $\ZZ$-flat.
	Furthermore, the argument of \cite{BBLSZ} allows to deduce finiteness of associated primes from generic flatness. 
	Thus, we settle a new case of Lyubeznik's conjecture: \autoref{thm_finite_ass} gives finiteness of associated primes of local cohomology in a smooth algebra over a Dedekind domain of characteristic $0$.	
	
	Our proof of \autoref{thmB} is inspired by the works of Lyubeznik \cite{Lyubeznik_DMOD, Lyubeznik_FMOD} on the finiteness of associated primes of local cohomology. 
	For the positive characteristic case of \autoref{thmB}, the theory of $F$-modules (as developed by Lyubeznik \cite{Lyubeznik_FMOD}) is our main tool (see \autoref{subsect_pos_char}).
	
	For the characteristic zero case of \autoref{thmB}, we obtain several results regarding the ring of differential operators $D_{R/A}$.
	For instance, under the assumption that $R$ is a smooth $A$-algebra, we show that $D_{R/A}$ equals the derivation ring $\Delta(R/A)$, and that for any $f \in R$ the localization $R_f$ is \emph{generically} a finitely generated left module over $D_{R/A}$.
	These type of results are classical when $A$ is a field (see \cite[Chapter 15]{McCONNELL_ROBSON}).
	Our main result in this direction is the following theorem.

	\begin{headthm}[\autoref{thm_gen_prop_smooth_A}]
		\label{thmC}
		Assume that $A$ contains the field $\QQ$ of rational numbers and $R$ is a smooth $A$-algebra.
		Then the following statements hold: 
		\begin{enumerate}[\rm (i)]
			\item $D_{R/A} = \Delta(R/A)$. 
			In particular,
			$
			\gr(D_{R/A}) = \bigoplus_{m=0}^\infty D^{m}_{R/A}/D^{m-1}_{R/A}
			$ 
			is a Noetherian commutative ring and $D_{R/A}$ is a Noetherian ring.
			\item $D_{R/A}$ is strongly right Noetherian.
			\item 
			For any $f \in R$, there exists a nonzero element $a \in A$ such that $R_f \otimes_{A} A_a$ is a finitely generated left module over $D_{R/A} \otimes_{A} A_a$.
		\end{enumerate}
	\end{headthm}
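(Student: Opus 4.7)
The plan is to treat the three parts in turn, with (i) providing the technical foundation for the others.

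For (i), the inclusion $\Delta(R/A) \subseteq D_{R/A}$ is automatic, so the content is the reverse inclusion together with the Noetherianity claims. The crux is the symbol isomorphism $\Sym_R \Der_A(R) \xrightarrow{\sim} \gr(D_{R/A})$. Since smoothness of $R/A$ implies $\Der_A(R)$ is finitely generated projective, and since formation of $D^m_{R/A}$ is compatible with \'etale base change, it suffices to verify this \'etale-locally where $R = A[x_1, \ldots, x_n]$. There every differential operator has the form $\sum_\alpha r_\alpha \tfrac{1}{\alpha!}\partial^\alpha$, crucially using characteristic zero to invert $\alpha!$, and $\gr(D_{R/A})$ becomes the polynomial ring on the symbols of $\partial_1, \ldots, \partial_n$. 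The equality $D_{R/A} = \Delta(R/A)$ then follows by induction on order: given $D \in D^m_{R/A}$, its symbol in $\Sym^m_R \Der_A(R)$ is realized by some element of $\Delta(R/A) \cap D^m_{R/A}$, and the difference lies in $D^{m-1}_{R/A} \subseteq \Delta(R/A)$ by induction. Noetherianity of $\gr(D_{R/A}) \cong \Sym_R \Der_A(R)$ is immediate since $\Der_A(R)$ is finitely generated over the Noetherian ring $R$, and Noetherianity of $D_{R/A}$ follows by a standard filtered argument.

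For (ii), the assertion is that $D_{R/A} \otimes_A B$ is right Noetherian for every Noetherian commutative $A$-algebra $B$. In the smooth setting, differential operators commute with base change, giving $D_{R/A} \otimes_A B \cong D_{R \otimes_A B / B}$. Since $R \otimes_A B$ is smooth over $B$ and $B \supseteq \QQ$ is inherited from $A$, part (i) applied to the pair $(B, R \otimes_A B)$ yields the claim.

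Part (iii) is the main obstacle. I would work over the fraction field $K = \mathrm{Frac}(A)$ and invoke the classical existence of a Bernstein--Sato polynomial: since $R \otimes_A K$ is smooth over the characteristic zero field $K$, there exist $P(s) \in (D_{R/A} \otimes_A K)[s]$ and a nonzero $b(s) \in K[s]$ satisfying $P(s) f^{s+1} = b(s) f^s$. Clearing denominators and inverting the leading coefficient of $b(s)$, this functional equation persists in $(D_{R/A} \otimes_A A_a)[s]$ for a suitable nonzero $a \in A$. The essential input is Kashiwara's theorem guaranteeing that every root of $b(s)$ is a negative rational; combined with $\QQ \subseteq A_a$, this lets me choose $N$ large enough that every $b(-N-k)$ for $k \geq 1$ is a unit in $A_a$. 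Iterating the identity $f^{-N-k} = b(-N-k)^{-1} P(-N-k) f^{-N-k+1}$ expresses each $f^{-j}$ with $j \geq N$ as an element of $(D_{R/A} \otimes_A A_a) \cdot f^{-N}$, and since $f^{-j} = f^{N-j} \cdot f^{-N} \in R \cdot f^{-N}$ for $j < N$, this yields $R_f \otimes_A A_a = (D_{R/A} \otimes_A A_a) \cdot f^{-N}$, in fact cyclic. The delicate step is obtaining one $a$ that inverts all the denominators $b(-N-k)$ uniformly in $k$: without the rationality of the roots, these could span a non-finitely-generated set of denominators and the finite localization argument would collapse.
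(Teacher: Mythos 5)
Your proof is correct in essence, and parts (i)--(ii) closely mirror the paper's, but part (iii) takes a genuinely different route. For (i), you argue via the symbol isomorphism $\Sym_R\Der_{R/A}\xrightarrow{\sim}\gr(D_{R/A})$ and an induction on order, whereas the paper proves the \'etale-transport isomorphism $R_{g_i}\otimes_{P_i}D_{P_i/A}\cong D_{R_{g_i}/A}$ (its Theorem 5.8) and carries $D=\Delta$ across it directly; both hinge on the same \'etale base-change compatibility of $D^m_{R/A}$, which you invoke as known but which is a nontrivial ingredient the paper establishes carefully. For (iii), you work globally over $K=\Quot(A)$ with a single Bernstein--Sato functional equation for $f$ in $R\otimes_A K$, while the paper instead localizes to each \'etale chart $P_i\to R_{g_i}$, applies the Bernstein--Sato equation in the polynomial ring $P_i$, pushes it through the \'etale map via an integral equation for $1/f$ over a suitable $P_{i,g}$, and then patches the charts using a partition of unity. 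The authors in fact remark at the end of \S 5.1 that your global route is available once existence and rationality of Bernstein--Sato polynomials are known for smooth $K$-algebras (their Proposition 5.18 and Theorem 5.19); these are slightly nonstandard extensions of the polynomial-ring case, so be explicit that you are invoking them rather than the textbook Weyl-algebra statement. Your observation that rationality of the roots is exactly what makes a single localization $A_a$ suffice (since then $b(-N-k)\in\QQ^\times\subset A^\times$ automatically) is precisely the point of the argument. One small simplification: since $b(s)\in\QQ[s]$, no inversion of coefficients of $b$ is ever needed; you only clear the finitely many denominators appearing in $P(s)$.
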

			
	Our proof of \autoref{thmC} is based on the following ideas. 
	From the smooth morphism $\Spec(R) \rightarrow \Spec(A)$, we obtain an affine open covering $\Spec(R_{g_i})$ of $\Spec(R)$ with \'etale morphisms $\Spec(R_{g_i}) \rightarrow \mathbb{A}_A^{n_i}$ (see \autoref{rem_cover_smooth_etale}).
	The ring differential operators $D_{P/A}$ of a polynomial ring $P$ over $A$ is well-understood (it is a relative Weyl algebra, see \autoref{rem_relative_Weyl_alg}), and then we can determine the ring of differential operators $D_{R_{g_i}/A}$ by studying the behavior of differential operators under an \'etale ring map (see \autoref{thm_diff_ops_etale}).
	This study of differential operators under \'etale ring maps follows from the work of M\'asson \cite{MASSON}.
	The finiteness result in part (iii) of \autoref{thmC} is a consequence of the rationality of the roots of Bernstein--Sato polynomials (see \autoref{prop_finite_gen_loc}, \autoref{thm: BS rationality}).
	
	\subsection*{Outline}
	The basic outline of this paper is as follows.
	In \autoref{sect_Grobner}, we give our approach to Gr\"obner bases over a Noetherian commutative coefficient ring.
	We provide the proof of \autoref{thmA} in \autoref{sect_sqr_grob}.
	In \autoref{sect_gen_free_filtered}, we study the generic freeness of certain not necessarily commutative filtered algebras.
	In \autoref{sect_diff_ops_smooth}, we study differential operators in a characteristic zero smooth setting and we prove \autoref{thmC}.
	The proof of \autoref{thmB} is given in \autoref{sect_local_cohom}.
	Finally, \autoref{sect_determinant} shows how to apply some of the methods developed in this paper to study certain specializations of determinantal ideals.
	
	\smallskip
	
	\noindent
	\textbf{Convention.} Unless specified otherwise, a ring is always assumed to be commutative. 
	All rings are assumed to be unitary.
	
	\section{Gr\"obner bases  \& Grothendieck's generic freeness lemma} 
	\label{sect_Grobner}

	In this section, we quickly review the basic theory of Gr\"obner bases over Noetherian rings with the aim of obtaining an effective proof of Grothendieck's generic freeness lemma.
	For more details regarding the theory of Gr\"obner bases over arbitrary Noetherian rings, the reader is referred to \cite[Chapter 4]{ADAMS_LOUST_GROEBNER}.
	The following setup and definitions are used throughout this section.

	\begin{setupdef}
		\label{setup_Grobner_deform}
		Let $A$ be a Noetherian domain and  $R = A[x_1,\ldots,x_r]$ be a polynomial ring over $A$.
		Let $F$ be a  free $R$-module $F = \bigoplus_{i=1}^\ell R e_i$ and $>$ be a \emph{monomial order on $F$}.
		A \emph{monomial in $F$} is an element of the form $m = \xx^\bn e_i = x_1^{n_1}\cdots x_r^{n_r} e_i \in F$ and a \emph{term in $F$} is an element of the form $m'  = am \in F$,  where $a \in A$, $\bn = (n_1,\ldots,n_r) \in \NN^r$ and $1 \le i \le \ell$.
		A \emph{monomial submodule of $F$} is a submodule generated by monomials.
		The monomial order $>$ is characterized by the following two conditions: 
		\begin{enumerate}[\rm (i)]
			\item $>$ is a total order on the set of all monomials in $F$.
			\item If $m_1$, $m_2$ are monomials in $F$ and $n\ne1$ is a monomial in $R$, then 
			$$
			m_1 > m_2 \qquad  \text{ implies that } \qquad nm_1 > nm_2 > m_2.
			$$ 
		\end{enumerate}
		We will always assume that the monomial order on $F$ is \emph{compatible} with a monomial order on $R$ (that we also denote as $>$) in the following sense: $\xx^\bn e_i > \xx^\bm e_j$ if and only if $i < j$, or $i = j$ and $\xx^\bn > \xx^\bm$.
		An element $w\in F$ can be written \emph{uniquely} as $w = \sum_{i = 1}^b a_im_i$ where the $m_i$'s are different monomials, and  its \emph{initial term} $\init_>(w)$ is given by the term with largest corresponding monomial.
		For a given $R$-submodule $M \subset F$, the corresponding \emph{initial module} is given by 
		$$
		\init_>(M) \,\coloneqq\, \left(\init_>(w) \mid w \in M\right).
		$$
		Notice that, contrary to the case where $A$ is a field, the initial submodule $\init_>(M)$ is \emph{not} necessarily a monomial submodule as it is generated by terms.
		A set of elements $\{w_1, \ldots, w_b\}$ in an $R$-submodule $M \subset F$ is said to be a \emph{Gr\"obner basis for $M$} if $\init_>(M) = \left(\init_>(w_1), \ldots, \init_>(w_b)\right)$.
		Whenever the monomial order used is clear from the context we will drop reference to it. 
		By abusing notation, we compare terms according to their respective monomials.
	\end{setupdef}

\begin{remark}
	\label{rem_basics_mons}
	The following statements hold.
	\begin{enumerate}[\rm (i)]
		\item Let $m \in F$ and $m_1, \ldots, m_b \in F$ be terms.
		If 
		$m \in (m_1, \ldots, m_b)$, then we can write $m = \sum_{i = 1}^b h_i m_i$ where the $h_i$'s are terms in $R$.
		\item If $w \in F$ and $f \in R$, then $\iniTerm(fw) = \iniTerm(f)\iniTerm(w)$.
	\end{enumerate}	
\end{remark}	
\begin{proof}
	
	(i) By the assumption we can write $m = \sum_{i = 1}^b f_i m_i$, where $f_i$ is a polynomial in $R$. 
However, after rewriting $m$ as a sum of terms and grouping 
the terms whose monomial is equal to the one of $m$, we will obtain an expression that will have (at most) one term $h_i \in R$ from each $f_i$. 

	(ii) Write decompositions $w = am + w'$ and $f = bh + f'$ where $a,b \in A$ are nonzero elements, and $m \in F$ and $h \in R$ are the maximal monomials of $w$ and $f$, respectively. 
	Since $A$ is a domain, $ab \neq 0$ and so we get $\iniTerm(fw) = am \cdot bh =  \iniTerm(f)\iniTerm(w)$.
\end{proof}	
	
	\begin{lemma}
		Let $M \subset F$ be an $R$-submodule.
		If $\{w_1,\ldots,w_b\} \in M$ is a Gr\"obner basis for $M$, then $M$ is generated by the elements $w_1,\ldots,w_b$.
	\end{lemma}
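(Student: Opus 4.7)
The plan is to argue by contradiction, using that the monomial order is a well-ordering on the set of monomials in $F$ together with \autoref{rem_basics_mons}. Set $N = (w_1,\ldots,w_b)$ and suppose for contradiction that $N \subsetneq M$. The first step would be to note that the monomial order on $F$ is a well-ordering on the set of monomials in $F$: condition (ii) of \autoref{setup_Grobner_deform} forces $n \cdot m > m$ for every nontrivial monomial $n \in R$ and every monomial $m \in F$, and the classical Dickson's lemma argument carries over unchanged. Consequently the set of monomials of $\init_>(v)$, as $v$ ranges over $M \setminus N$, has a minimum; pick $w \in M \setminus N$ whose initial term $\init_>(w) = c\,m$ realizes this minimum.

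Next I would exploit the Gr\"obner basis hypothesis: since $c\,m = \init_>(w) \in \init_>(M) = (\init_>(w_1),\ldots,\init_>(w_b))$, part (i) of \autoref{rem_basics_mons} produces a decomposition $c\,m = \sum_{i=1}^{b} h_i\,\init_>(w_i)$ with each $h_i$ either zero or a single term of $R$. The extra observation one needs from the proof of that remark is that each nonzero summand $h_i\,\init_>(w_i)$ has monomial equal to $m$, since any other monomial appearing would have no way to cancel on the left.

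The heart of the argument is then to check that $w' \coloneqq w - \sum_{i=1}^{b} h_i\,w_i$ satisfies $\init_>(w') < \init_>(w)$ or $w' = 0$. For each nonzero $h_i$, \autoref{rem_basics_mons}(ii) gives $\init_>(h_i w_i) = h_i\,\init_>(w_i)$, of monomial $m$, while every other term of $h_i w_i$ has monomial strictly below $m$ (using that multiplication by a monomial preserves strict order). Summing over $i$, the contributions of monomial $m$ collect to exactly $c\,m = \init_>(w)$, so the leading term of $w$ is killed upon subtraction, and every surviving term of $w'$ has monomial strictly less than $m$. Since $w, w_i \in M$ we have $w' \in M$, and since $w \notin N$ we have $w' \notin N$. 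Thus $w'$ is an element of $M \setminus N$ with a strictly smaller initial monomial than $w$, contradicting the choice of $w$ — unless $w' = 0$, in which case $w = \sum h_i w_i \in N$, again a contradiction.

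I expect the only real subtleties to be (a) verifying that the module monomial order is a well-ordering, so that the minimum in the opening step actually exists, and (b) being careful that in the decomposition produced by \autoref{rem_basics_mons}(i) each summand $h_i\,\init_>(w_i)$ contributes only to the monomial $m$; with those two points secured, the rest is a direct adaptation of the classical division-algorithm argument from the field case.
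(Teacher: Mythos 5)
Your proof is correct and follows essentially the same route as the paper's: both rely on \autoref{rem_basics_mons}(i) to decompose $\init_>(w)$ as $\sum_i h_i\,\init_>(w_i)$ with term coefficients, use \autoref{rem_basics_mons}(ii) to conclude that subtracting $\sum_i h_i w_i$ strictly lowers the leading monomial, and terminate via the well-foundedness of the monomial order. You package this as a minimal-counterexample argument (picking $w \in M \setminus N$ with smallest leading monomial and deriving a contradiction), whereas the paper performs the reduction iteratively and invokes the descending chain condition to guarantee finitely many steps; these are the same argument in two equivalent phrasings.
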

	\begin{proof}
	Let $w \in M$ be a nonzero element. 
	The assumption and \autoref{rem_basics_mons}(i) yield the existence of terms $h_1, \ldots, h_b \in R$ such that $\init(w) = \sum_{i = 1}^b h_i \init(w_i)$. 
	We have that $h_i \iniTerm(w_i) = \iniTerm(h_iw_i)$ (see \autoref{rem_basics_mons}(ii)).
	Therefore, it follows that $w' = w - \sum_{i = 1}^b h_iw_i$ satisfies $\iniTerm(w') < \iniTerm (w)$. 
	Since a monomial order satisfies the descending chain condition (see, e.g., \cite[Lemma 15.2]{EISEN_COMM}, \cite[\S 2.1]{HERZOG_HIBI_MONOMIALS}), we can show that $w \in \left(w_1,\ldots,w_b\right)$ by repeating the above procedure finitely many times. 	
	\end{proof}
	
	\begin{proposition}
		Let $M \subset F$ be an $R$-submodule.
		If an element $f \in R$ is such that $\iniTerm (f)$ is a non-zero-divisor over $F/\iniTerm(M)$, then $f$ is a non-zero-divisor over $F/M$.
	\end{proposition}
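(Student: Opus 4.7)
The plan is a proof by contradiction via infinite descent along the well-order on monomials. I would suppose that there exists $w \in F \setminus M$ with $fw \in M$, and among all such counterexamples choose one whose initial term $\init(w)$ has minimal underlying monomial; this is legitimate because a monomial order on $F$ restricts to a well-order on the monomials.

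From $fw \in M$ and \autoref{rem_basics_mons}(ii) (which uses that $A$ is a domain), one reads off $\init(f)\,\init(w) = \init(fw) \in \init(M)$, and the hypothesis on $\init(f)$ then forces $\init(w) \in \init(M)$. Next I would choose a Gr\"obner basis $\{w_1,\dots,w_b\}$ of $M$; such a basis exists because $R = A[x_1,\dots,x_r]$ is Noetherian (so $F$ is a Noetherian $R$-module), and any finite generating set of $\init(M)$ can be refined to a finite collection of initial terms of elements of $M$. Applying \autoref{rem_basics_mons}(i) to $\init(w) \in (\init(w_1),\dots,\init(w_b))$, I obtain an expression $\init(w) = \sum_i h_i\, \init(w_i)$ with each $h_i$ a term in $R$; moreover the constructive proof of \autoref{rem_basics_mons}(i) yields such an expression in which every nonzero summand $h_i\, \init(w_i)$ shares the monomial of $\init(w)$.

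The descent step is to set $w' := w - \sum_i h_i w_i$. Since $\sum_i h_i w_i \in M$, one has $w' \in M$ if and only if $w \in M$, so $w' \notin M$; and $fw' \in M$ is immediate. By \autoref{rem_basics_mons}(ii), $\init(h_i w_i) = h_i\, \init(w_i)$, and these leading contributions all share the monomial of $\init(w)$ and sum to $\init(w)$; they therefore cancel when forming $w - \sum_i h_i w_i$, while the tails of each $h_i w_i$ lie strictly below that monomial. Consequently, either $w' = 0$ --- in which case $w = \sum_i h_i w_i \in M$, a direct contradiction --- or $\init(w')$ has strictly smaller monomial than $\init(w)$, contradicting the minimal choice.

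The step I expect to be the main obstacle is the last one, namely guaranteeing that the initial monomial strictly decreases after subtraction. Over a field one simply rescales a single Gr\"obner basis element to absorb $\init(w)$; over the coefficient ring $A$ one may need to combine several of the $h_i\, \init(w_i)$ to assemble $\init(w)$, and a priori some of these products could involve monomials strictly larger than that of $\init(w)$ that merely cancel in the sum. The refined form of \autoref{rem_basics_mons}(i), which forces the monomials of the nonzero summands to match that of $\init(w)$, is precisely what rules out this pathology and keeps the descent going.
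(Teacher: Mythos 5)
Your proof is correct and follows essentially the same route as the paper's: pass from $fw\in M$ to $\init(w)\in\init(M)$ via Remark~2.2(ii) and the non-zero-divisor hypothesis, express $\init(w)$ in terms of a Gr\"obner basis using Remark~2.2(i), subtract to strictly decrease the leading monomial, and conclude by the descending chain condition (framed here equivalently as a minimal counterexample). The worry you raise at the end about larger monomials cancelling is exactly the issue the refined form of Remark~2.2(i) is designed to rule out, and you invoke it correctly.
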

	\begin{proof}
	Suppose there is an element $w \in F$ such that $fw \in M$. 
	By \autoref{rem_basics_mons}(ii), $\iniTerm(fw) = \iniTerm(f) \iniTerm(w)$ and so the hypothesis implies that $\iniTerm(w) \in \iniTerm(M)$.
	Let $\{w_1,\ldots,w_b\}$  be a Gr\"obner basis for $M$. 
	We can find terms $h_i \in R$ such that $\iniTerm(w) = \sum_{i = 1}^b h_i\iniTerm(w_i)$. 
	Let $w' = w - \sum_{i = 1}^b h_i w_i$. 
	It is then clear that $fw' \in M$ and that $\iniTerm (w) > \iniTerm (w')$. 
	Again, since a monomial order satisfies a descending chain condition, by repeating the above process we obtain that $w \in M$.
\end{proof}

	We now discuss a process of homogenization for an $R$-submodule  $M \subset F$. 
	Let $\omega = (\omega_1, \ldots, \omega_r)\in \ZZ_+^{r}$ and $\mathbb{d} = (d_1, \ldots, d_\ell)\in \ZZ_+^{\ell}$ be two weight vectors. 
	The corresponding $(\omega,\mathbb{d})$-degree of the monomial $\xx^\bn e_k = x_1^{n_{1}} \cdots x_r^{n_{r}} e_k \in F$ is given by 
	$$
	\deg_{\omega,\mathbb{d}}(\xx^\bn e_k) \,\coloneqq\,    \omega \cdot \bn + d_k \;=\; n_{1} \omega_{1} + \cdots + n_r \omega_{r} + d_k.  
	$$
	For an element $w \in F$, $\deg_{\omega,\dd}(w)$ is the maximum $(\omega,\dd)$-degree of the terms of $w$ and  $\iniTerm_{\omega,\dd}(w)$ is the sum of all the terms of $w$ of maximal $(\omega,\dd)$-degree.
	Consider the polynomial ring $S = R[t]$ and the corresponding free $S$-module $F[t] = F \otimes_R S$.
	For an element $w =  \sum_{j} a_j \, \xx^{\bn_j} \, e_{k_j} \in F$, the corresponding $(\omega,\dd)$-homogenization is given by
	$$
	\hom_{\omega,\dd}(w) \,\coloneqq\, \sum_{j} a_j \, \xx^{\bn_j} \, t^{{\deg_{\omega,\dd}(w)} - {\deg_{\omega,\dd}(\xx^{\bn_j}e_{k_j})}} \,  e_{k_j} \; \in  F[t].
	$$
	For an $R$-submodule $M \subset F$, we define $\hom_{\omega,\dd}(M) \subset F[t]$ as  the $S$-submodule generated by $\hom_{\omega,\dd}(w)$ for all $w \in M$.
	After considering $S$ as a graded polynomial ring with $[S]_0 = A$, $\deg(x_i) = \omega_i$ and $\deg(t) = 1$, we obtain that $\hom_{\omega,\dd}(M)$ is a graded $S$-submodule of the graded free $S$-module $F[t] = \bigoplus_{i = 1}^\ell Se_i \cong \bigoplus_{i = 1}^\ell S(-d_i)$.	

	The following theorem shows that, in our current relative setting over a Noetherian domain $A$,  the theory of Gr\"obner degenerations \emph{generically over A} behaves just as the classical setting over a field. 
	As a direct consequence, we obtain an effective proof of Grothendieck's generic freeness lemma which is suitable for computations.

\begin{theorem}[Generic deformation to the initial module]
	\label{thm_gen_grob}
	Assume \autoref{setup_Grobner_deform}.
	Let $M \subset F$ be an $R$-submodule and suppose that $\iniTerm(M) = \left(a_1m_1,\ldots,a_bm_b\right)$ where $0\neq a_i \in A$ and $m_i \in F$ is a monomial.
   Choose $0 \neq a \in (a_1) \cap \cdots \cap (a_b) \subset A$.
	Then the following statements hold:
	\begin{enumerate}[\rm (i)]
		\item {\rm(}Generic Macaulay's theorem; {\rm cf.~\cite[Theorem 15.3]{EISEN_COMM}}{\rm)} $F/M \otimes_A A_a$ is a free $A_a$-module with a basis given by the set of monomials in $\left(F \otimes_A A_a\right) \setminus \left(m_1,\ldots,m_b\right)$.
		\item {\rm(}Generic flat degenerations; {\rm cf.~\cite[Theorem 15.17]{EISEN_COMM}}{\rm)} 
		There exist weight vectors $\omega = (\omega_1, \ldots, \omega_r)\in \ZZ_+^{r}$ and $\dd = (d_1, \ldots, d_\ell)\in \ZZ_+^{\ell}$ such that, over the graded polynomial ring $S = R[t]$ with $[S]_0 = A$, $\deg(x_i) = \omega_i$ and $\deg(t) = 1$, the graded $S$-submodule $E = \hom_{\omega,\dd}(M) \subset F[t] \cong \bigoplus_{i = 1}^\ell S(-d_i)$ satisfies the following properties:
		\begin{enumerate}[\rm (a)]
			\item $F[t]/E \otimes_{A[t]} A[t]/(t) \cong F/\iniTerm (M)$.
			\item $F[t]/E \otimes_{A[t]} A[t, t^{-1}] \cong F/M \otimes_{A} A[t, t^{-1}]$.
			\item $F[t]/E \otimes_A A_a$ is a free $A_a[t]$-module.			
		\end{enumerate}
	\end{enumerate}
\end{theorem}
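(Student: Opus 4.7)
For part (i), I would first lift the generators of $\iniTerm(M)$ to a Gr\"obner basis: pick $g_1,\ldots,g_b \in M$ with $\iniTerm(g_i) = a_im_i$. After tensoring with $A_a$, each $a_i$ becomes a unit, so in the localized polynomial ring $R_a = A_a[x_1,\ldots,x_r]$ the initial module $\iniTerm(M\otimes_A A_a)$ is the \emph{monomial} submodule generated by $m_1,\ldots,m_b$. This is the crucial point: by inverting $a$ we reduce to the familiar situation of a Gr\"obner basis with unit leading coefficients, where the classical division algorithm of \cite[Chapter 4]{ADAMS_LOUST_GROEBNER} applies verbatim. Spanning then follows from iteratively subtracting suitable multiples $h_ig_i$ (using \autoref{rem_basics_mons}) until the remainder is supported on the standard monomials; termination uses the descending chain condition on monomial orders. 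For linear independence, if an $A_a$-combination $w' = \sum c_\alpha n_\alpha$ of standard monomials lies in $M\otimes_A A_a$, its initial term $c_\alpha n_\alpha$ would lie in $(m_1,\ldots,m_b)$, contradicting that $n_\alpha$ is a standard monomial.

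For part (ii), the plan is to represent the given monomial order $>$ by weight data on the Gr\"obner basis. Concretely, since we have a finite set $g_1,\ldots,g_b$, I would choose $\omega \in \ZZ_+^r$ and $\dd \in \ZZ_+^\ell$ large enough so that $\iniTerm_{\omega,\dd}(g_i) = \iniTerm_>(g_i) = a_im_i$ for every $i$; such a choice is possible because only finitely many monomials appear in the $g_i$. Setting $E = \hom_{\omega,\dd}(M)$, the homogenizations $\hom_{\omega,\dd}(g_i)$ generate $E$ as an $S$-module (a standard check, cf.~\cite[Theorem 15.17]{EISEN_COMM}). Part (a) is immediate: modulo $(t)$ the homogenization of $g_i$ collapses to $\iniTerm_{\omega,\dd}(g_i) = a_im_i$, and these generate $\iniTerm(M)$. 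Part (b) is the classical dehomogenization: setting $t=1$ on each $\hom_{\omega,\dd}(g_i)$ returns $g_i$, yielding $F[t]/E \otimes_{A[t]} A[t,t^{-1}] \cong F/M \otimes_A A[t,t^{-1}]$.

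For part (ii)(c), the strategy is to apply part (i) to the submodule $E \subset F[t]$ viewed over the polynomial ring $R[t] = A[x_1,\ldots,x_r,t]$. Extend the monomial order on $F$ to a monomial order on $F[t]$ by declaring $t$ to be lexicographically smaller than every $x_i$; then for each generator $\hom_{\omega,\dd}(g_i)$, the leading term is precisely $a_i m_i$ (no $t$ appears), so $\{\hom_{\omega,\dd}(g_i)\}$ is a Gr\"obner basis of $E$ with $\iniTerm(E) = (a_1m_1,\ldots,a_bm_b)$, viewed inside $F[t]$. Applying part (i) in this extended setting yields that $F[t]/E \otimes_A A_a$ is $A_a$-free on the monomials of $F[t]\otimes_A A_a$ not in $(m_1,\ldots,m_b)$, i.e.\ on $\{t^k n_\alpha \mid k\ge 0,\ n_\alpha \text{ a standard monomial of } F/M\otimes_A A_a\}$. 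Repackaging this basis shows that $F[t]/E \otimes_A A_a$ is in fact free as an $A_a[t]$-module.

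The main obstacle I anticipate is the bookkeeping for part (ii)(c): one must verify both that the generators of $E$ are obtained by homogenizing the chosen Gr\"obner basis (as opposed to needing new elements), and that the leading coefficients propagate unchanged to the extended order on $F[t]$. Once this is set up correctly, the freeness statement follows formally from part (i). The choice of weights $(\omega,\dd)$ matching $>$ on the finite set $\{g_1,\ldots,g_b\}$ is standard but deserves explicit justification.
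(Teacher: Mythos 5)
Your plan takes essentially the same route as the paper. Part (i) is on target (invert $a$, reduce to a monomial initial submodule, run the division process with the DCC for termination, and use the initial term for independence), and for part (ii) the choice of weights $(\omega,\dd)$ representing $>$ on the Gr\"obner basis and the verification of (a) and (b) match the paper's argument almost line for line. In (ii)(c) your extended order on $F[t]$ (compare the $\xx$-part first, then $t$-degree) is phrased a bit loosely, but it is a legitimate variant of the paper's order $>'$ (compare total $(\omega,\dd,1)$-degree first, then $>$ as tie-breaker); since $E$ is a graded submodule of $F[t]$, the two orders agree on every graded piece, so either choice works.

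There is, however, one genuine gap in (ii)(c). The inference ``the leading term of $\hom_{\omega,\dd}(g_i)$ is $a_im_i$, so $\{\hom_{\omega,\dd}(g_i)\}$ is a Gr\"obner basis of $E$'' is a non-sequitur: knowing the initial terms of a generating set does not determine $\iniTerm(E)$; that is exactly what Buchberger's criterion or an equivalent argument is needed for. (The same remark applies to your parenthetical that the $\hom_{\omega,\dd}(g_i)$ generate $E$ --- in the paper that is a \emph{consequence} of the Gr\"obner basis property, not an independent ``standard check.'') The missing step, which the paper supplies, is a dehomogenization argument: every homogeneous $z\in E$ factors as $z=t^{m}\hom_{\omega,\dd}(w)$ with $w=z|_{t=1}$, and $w\in M$ because $E=\hom_{\omega,\dd}(M)$ and setting $t=1$ maps each $S$-combination of homogenizations into $M$; under either extended order this yields $\iniTerm(z)=t^{m}\,\iniTerm_{>}(w)$ with $m \ge 0$, which lies in $(a_1m_1,\ldots,a_bm_b)F[t]$ since $\{g_i\}$ is a Gr\"obner basis for $M$. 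Note that $m$ can be strictly positive for general $w\in M$ --- your ``no $t$ appears'' only holds for the chosen $g_i$, by the way the weights were picked --- which is why the claim about $\iniTerm(E)$ requires this argument and not just an inspection of the generators. You correctly flag something like this as the main obstacle, but treat it as bookkeeping to be deferred; it is the entire content of (ii)(c), and without it the appeal to part (i) is unsupported.
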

\begin{proof}
	Throughout the proof we substitute $A$ by its localization $A_a$.
	Therefore we may assume that $\iniTerm(M) = (m_1,\ldots,m_b)$ and that $F/\iniTerm(M)$ is a free $A$-module with basis $B$, where $B$ is the set of monomials not in the initial module $\iniTerm(M)$.
	We choose a Gr\"obner basis $\{w_1,\ldots,w_b\}$ for $M$ such that $m_i = \iniTerm(w_i)$.
	
	(i) Let $w \in F \setminus M$.
	Choose the maximal term $m \in F$ of $w$ such that $m \in \iniTerm(M)$.
	Since $\iniTerm(M)$ is assumed to be an actual monomial submodule, we have $m = h m_i$ for some term $h \in R$ (cf.~\autoref{rem_basics_mons}(i)).
	Let $w' = w - hw_i$ and $m' \in F$ be the maximal term of $w'$ such that $m' \in \iniTerm(M)$.
	Notice that $[w] = [w'] \in F/M$ and that $m > m'$.
	Once again, by the descending chain condition of a monomial order, we can repeat this process finitely many times and obtain an element $z \in F$ whose terms only involve monomials in $B$ and such that $[w] = [z] \in F/M$.
	This shows that $B$ is a generating set of $F/M$ as an $A$-module.
	
	It is straightforward to check that the monomials in $B$ are $A$-linearly independent inside $F/M$.
	Indeed, if there is a dependence relation $w = \sum_{i=1}^k c_i\mu_i \in M$ with $0 \neq c_i \in A$ and $\mu_i \in B$, then we would obtain the contradiction $c_i\mu_i = \iniTerm(w) \in \iniTerm(M)$ for some $i$.
	
	(ii)
	Notice that there exists a weight vector $\omega \in \ZZ_+^{r}$ such that  $\omega \cdot \bn > \omega \cdot \bm$ for any pair $(\xx^\bn e_k, \xx^\bm e_k)$ in the following finite set
	$$
	\big\{ (\xx^\bn e_k, \xx^\bm e_k) \,\mid\, \text{$\xx^\bn e_k$ and  $\xx^\bm e_k$ are monomials of the same $w_i$ and  $\xx^\bn e_k > \xx^\bm e_k$}  \big\}.
	$$	
	Indeed, by utilizing Farkas' lemma, the proof follows as for the case of ideals (see \cite[Proposition 1.11]{STURMFELS_GROBNER}, \cite[Lemma 3.1.1]{HERZOG_HIBI_MONOMIALS}).
	Then, we can choose $\dd = (d_1, \ldots,d_\ell)$ with differences $d_k-d_{k+1}>0$ as large as needed so that we obtain $\iniTerm_>(w_i) = \iniTerm_{\omega,\dd}(w_i)$ for all $1 \le i \le b$.
	
	Let $E = \hom_{\omega,\dd}(M) \subset F[t]$.
	We shall prove that $E$ satisfies all the claimed properties.
	It is clear that property (a) holds. 
	Consider the automorphism $\varphi$ on $F[t,t^{-1}]$ given by $\varphi(\xx^\bn e_k) = t^{\deg_{\omega,\dd}(\xx^\bn e_k)}\xx^\bn e_k$, and notice that it takes  $E \otimes_{A[t]} A[t,t^{-1}]$ into $M \otimes_{A} A[t,t^{-1}]$.
	Therefore, $\varphi$ induces the isomorphism of property (b).
	Notice that for both properties (a) and (b) we do not need to substitute $A$ by its localization $A_a$.
	It remains to check property (c).
	
	We now define a monomial order $>'$ on $F[t]$ by setting that $\xx^\bn t^n e_k >' \xx^\bm t^m e_l$ if and only if 
	\begin{itemize}[--]
		\item $\deg_{\omega,\dd}(\xx^\bn e_k) + n> \deg_{\omega,\dd}(\xx^\bm e_l) + m$, or
		\item $\deg_{\omega,\dd}(\xx^\bn e_k) + n = \deg_{\omega,\dd}(\xx^\bm e_l) + m$ and $\xx^\alpha e_k > \xx^\beta e_l$.
	\end{itemize}
	By construction, for any $z \in E = \hom_{\omega,\dd}(M)$, we obtain that 
	$$
	\iniTerm_{>'}(z) \;=\;  t^{m} \iniTerm_>(w)
	$$
	for some $w \in M$ and $m \ge 0$.
	Therefore, since $\{w_1,\ldots,w_b\}$ is a Gr\"obner basis for $M \subset F$ with respect to $>$, it follows that $\{\hom_{\omega,\dd}(w_1),\ldots, \hom_{\omega,\dd}(w_b)\}$ is a Gr\"obner basis for $E \subset F[t]$ with respect to $>'$.
	Hence, we have $\iniTerm_{>'}(E) = \left(m_1, \ldots, m_b\right) \subset F[t]$.
	By utilizing part (i) applied to $E \subset F[t]$, the set of monomials in $F[t] \setminus \iniTerm_{>'}(E)$ provides an $A$-basis for $F[t] / E$, and so it follows that $F[t] / E$ is a free $A[t]$-module with a basis given by the monomials in $B$.	
	This settles property (c).
\end{proof}

	A direct consequence of the above theorem is the following celebrated result of Grothendieck \cite[Lemme 6.9.2]{EGA4_II}. 
	
	\begin{corollary}[Grothendieck's generic freeness lemma]\label{cor: gen free}
	Let $A$ be a Noetherian domain, $B$ be a finitely generated $A$-algebra, and $M$ be a finitely generated $B$-module. 
		Then there exists  a nonzero element $a \in A$ such that $M \otimes_{A} A_a$ is a free $A_a$-module.
	\end{corollary}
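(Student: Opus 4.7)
The plan is to bootstrap from Theorem~\ref{thm_gen_grob}(i), which already delivers generic freeness for quotients of a free module over a polynomial ring by a submodule whose initial module is finitely generated by terms. So the whole task reduces to presenting $M$ in this form. First I would use the finite generation of $B$ over $A$ to write $B = A[x_1,\ldots,x_r]/J$ for some polynomial ring $R = A[x_1,\ldots,x_r]$ and some ideal $J \subset R$. Then, since $M$ is a finitely generated $B$-module, it is in particular a finitely generated $R$-module, so I can choose a surjection $F = R^{\ell} \twoheadrightarrow M$ with kernel $N \subset F$, giving $M \cong F/N$. Finally fix a monomial order on $F$ compatible with a monomial order on $R$ in the sense of \autoref{setup_Grobner_deform}.

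Next I would verify that $\init_>(N)$ admits a finite generating set consisting of terms $a_i m_i$ with $0 \neq a_i \in A$ and $m_i \in F$ a monomial, which is the shape required to invoke \autoref{thm_gen_grob}(i). Since $A$ is Noetherian, Hilbert's basis theorem makes $R$ and hence $F$ Noetherian, so $\init_>(N)$ is finitely generated as an $R$-submodule of $F$. Given any finite generating set, each generator is a sum of terms, and using that $\init_>(N)$ is by definition the $R$-submodule generated by the single terms $\{\init_>(w) : w \in N\}$, every such term is itself an $R$-multiple of an $\init_>(w)$ and thus already lies in $\init_>(N)$. Breaking each generator into its constituent terms therefore produces a finite generating set of $\init_>(N)$ by terms $a_1 m_1,\ldots,a_b m_b$.

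With this presentation in hand, I choose $0 \neq a \in (a_1) \cap \cdots \cap (a_b) \subset A$, which is nonzero because $A$ is a Noetherian domain and each $(a_i)$ is a nonzero ideal. Then \autoref{thm_gen_grob}(i) applied to $N \subset F$ gives directly that $M \otimes_A A_a \cong F/N \otimes_A A_a$ is a free $A_a$-module, with an explicit basis consisting of the monomials in $F \otimes_A A_a$ not lying in $(m_1,\ldots,m_b)$. I do not anticipate a genuine obstacle: the substantive content has been packaged into \autoref{thm_gen_grob}, and all that remains is the reduction to a finitely generated initial module. This route also makes the element $a$ explicit once a Gr\"obner basis for $N$ is computed, which is exactly the effectivity emphasized in the introduction.
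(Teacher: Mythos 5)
Your proof is correct and follows exactly the route of the paper: present $M \cong F/N$ with $F$ a finite free module over a polynomial ring $R = A[x_1,\ldots,x_r]$ and apply \autoref{thm_gen_grob}(i). One small imprecision in your middle paragraph: it is not generally true that a term appearing in a generator of $\init_>(N)$ is an $R$-multiple of a single $\init_>(w)$ --- over a coefficient ring the leading coefficients can combine, e.g.\ $5x \in (2x,3x)=(x) \subset \ZZ[x]$ is not a $\ZZ[x]$-multiple of $2x$ alone or of $3x$ alone. The fact you need, namely that $\init_>(N)$ admits a finite generating set of terms $\init_>(w_1),\ldots,\init_>(w_b)$, follows more directly: $\init_>(N)$ is by definition generated by the set $\{\init_>(w):0\neq w \in N\}$, and a Noetherian module generated by a set is generated by a finite subset of it --- this is exactly the existence of a finite Gr\"obner basis as in \autoref{setup_Grobner_deform}, and it is what the hypothesis of \autoref{thm_gen_grob} tacitly asserts.
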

	\begin{proof}
		The result follows from \autoref{thm_gen_grob} after choosing a  surjection $R = A[x_1,\ldots,x_r] \twoheadrightarrow B$ from a polynomial ring and presenting $M$ as a quotient $M \cong F/N$ with $F$ a free $R$-module of finite rank.
	\end{proof}

	We also have the following graded version that provides generic freeness for all the graded parts of a graded module. 
	
	\begin{corollary}
		\label{cor_gen_free_grad}
		Let $A$ be a Noetherian domain, $B = \bigoplus_{i=0}^\infty B_i$ be a positively graded finitely generated $A$-algebra with $A$ acting on $B_0$, and $M$ a finitely generated graded $B$-module. 
		Then there exists  a nonzero element $a \in A$ such that $M_\nu \otimes_{A} A_a$ is a free $A_a$-module for all $\nu \in \ZZ$.
	\end{corollary}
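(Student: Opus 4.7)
The plan is to reduce to \autoref{thm_gen_grob}(i) applied in a graded setup, and then observe that the monomial basis produced there splits according to degree, so a single localization works for every graded piece simultaneously.

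First I would present $B$ as a graded quotient of a graded polynomial ring. Choose homogeneous $A$-algebra generators $x_1,\ldots,x_r$ of $B$ with weights $\omega_i = \deg(x_i) \ge 0$; this is possible because $B$ is positively graded and finitely generated over $A$, and we can include degree-zero generators to account for the $A$-algebra $B_0$. Grading $R = A[x_1,\ldots,x_r]$ by $\deg(x_i)=\omega_i$, the surjection $R\twoheadrightarrow B$ becomes graded, so $M$ is a finitely generated graded $R$-module. Present it as $M \cong F/N$, where $F = \bigoplus_{i=1}^\ell R(-c_i)e_i$ is a finite graded free $R$-module and $N \subset F$ is a graded $R$-submodule.

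Next I would fix a monomial order $>$ on $F$ that is compatible with this grading, in the sense that $\deg(m)>\deg(m')$ implies $m>m'$ for monomials $m,m' \in F$ (for example, compare first by $(\omega,\dd)$-degree and break ties by any refinement). Since $N$ is graded, it admits a Gr\"obner basis of homogeneous elements, so $\iniTerm_>(N) = (a_1m_1,\ldots,a_bm_b)$ for some nonzero $a_i \in A$ and \emph{homogeneous} monomials $m_i \in F$. I would then pick $0\neq a \in (a_1)\cap\cdots\cap(a_b)$ and invoke \autoref{thm_gen_grob}(i) to conclude that $M \otimes_A A_a = F/N \otimes_A A_a$ is $A_a$-free with basis $\mathcal{B}$ equal to the set of monomials of $F\otimes_A A_a$ outside $\iniTerm_>(N)$.

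Finally, because every monomial in $F$ is homogeneous, the set $\mathcal{B}$ partitions by degree as $\mathcal{B} = \bigsqcup_{\nu \in \ZZ} \mathcal{B}_\nu$, and the induced splitting of $M\otimes_A A_a$ coincides with its graded decomposition. Hence $M_\nu \otimes_A A_a$ is $A_a$-free with basis $\mathcal{B}_\nu$ for every $\nu \in \ZZ$. The only delicate point is the bookkeeping: choosing a grading-compatible monomial order and verifying that $\iniTerm_>(N)$ is generated by genuinely homogeneous monomial terms, so that the partition of the basis by degree makes sense. Once this is in place the corollary is an immediate consequence of the generic Macaulay statement already established in \autoref{thm_gen_grob}(i).
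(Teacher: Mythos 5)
Your proof is correct and follows essentially the same strategy as the paper: present $M$ as a graded quotient $F/N$ over a graded polynomial ring, apply the generic Macaulay statement (\autoref{thm_gen_grob}(i)) to get a monomial $A_a$-basis $\mathcal{B}$ of $M\otimes_A A_a$, and observe that this basis partitions by degree. The one place you do more work than necessary is the insistence on a grading-compatible monomial order and on a homogeneous Gr\"obner basis for $N$: as the paper observes, none of that is needed. A monomial $\xx^\bn e_j$ in $F$ is automatically homogeneous with respect to \emph{any} grading on $R$ and $F$, regardless of the monomial order used to compute $\iniTerm_>(N)$. So if $\mathcal{B}$ is the monomial basis from \autoref{thm_gen_grob}(i) (under an arbitrary monomial order), one has $\bigoplus_{m\in\mathcal{B},\,\deg m=\nu}A_a\,m\subseteq M_\nu\otimes_A A_a$; summing over $\nu$ and comparing with the direct sum $M\otimes_A A_a=\bigoplus_\nu M_\nu\otimes_A A_a$ forces equality degree by degree. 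The concern you flag --- that the reduction procedure inside the proof of \autoref{thm_gen_grob}(i) might not preserve degree when the Gr\"obner basis is not homogeneous --- is real at the level of the reduction algorithm, but immaterial to the conclusion: one only needs that the resulting basis consists of homogeneous elements, not that the reduction stays in a fixed degree. So your proof is sound; the paper's is the same argument pared down by this observation.
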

  	\begin{proof}
		We may choose a graded surjection $R = A[x_1,\ldots,x_r] \surjects B$ with $R$ a positively graded polynomial ring over $A$, and a graded presentation $M \cong F/N$ with $F$ a graded free $R$-module.
		Putting aside the grading, \autoref{thm_gen_grob}(i) already implies that, after localizing at a nonzero element $a \in A$, $F/N$ becomes a free $A$-module with a basis given by the monomials not in an initial submodule.
		However, since monomials are clearly homogeneous with respect to any grading, the result of the corollary follows.
  	\end{proof}
	
	The same argument easily recovers a stronger result due to Hochster and Roberts \cite[Lemma~8.1]{Hochster_Roberts_Invariants}. 
	
	\begin{proposition}[Hochster -- Roberts]
		\label{prop_hochster_roberts}
Let $A$ be a Noetherian domain, $B$ be a finitely generated $A$-algebra, and
$C$ be a finitely generated $B$-algebra.
Let $E$ be a finitely generated $C$-module, $N \subset E$ be a finitely generated $A$-submodule, and $M \subset E$ be a finitely generated $B$-submodule.
	Set $D = E/(N+M)$.
	Then there exists  a nonzero element $a \in A$ such that $D \otimes_{A} A_a$ is a free $A_a$-module.
	\end{proposition}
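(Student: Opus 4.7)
The plan is to adapt the Gr\"obner-basis argument used for \autoref{cor: gen free}. Write $C = P/I_C$ where $P = A[x_1,\ldots,x_r]$ is a polynomial ring and, without loss of generality, the images of $x_1,\ldots,x_s$ generate $B \subseteq C$ as an $A$-subalgebra; set $P' = A[x_1,\ldots,x_s]$. Present $E$ as $F/K$ with $F = \bigoplus_{k=1}^\ell P\,e_k$ a finitely generated free $P$-module and $K$ a $P$-submodule containing $I_C\cdot F$, and lift the generators of $M$ and $N$ to $\tilde m_1,\ldots,\tilde m_p$ and $\tilde n_1,\ldots,\tilde n_q$ in $F$. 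Then $D = F/L$ with
\[
L \;=\; K \;+\; \sum_{i=1}^{p} P'\,\tilde m_i \;+\; \sum_{j=1}^{q} A\,\tilde n_j,
\]
and the only reason \autoref{thm_gen_grob} does not apply directly is that $L$ is merely an $A$-submodule of $F$, not a $P$-submodule.

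I would then fix a block monomial order on $F$ in which $x_{s+1},\ldots,x_r$ dominate $x_1,\ldots,x_s$, and compute three partial Gr\"obner bases in parallel: (i) a $P$-Gr\"obner basis of $K$ with leading terms $a^K_\alpha\mu^K_\alpha$; (ii) a $P'$-Gr\"obner basis of the $P'$-submodule $\sum_i P'\tilde m_i$, regarding $F$ as a free $P'$-module of countably infinite rank (Buchberger's algorithm still terminates since the starting set is finite and monomials are well-ordered), with leading terms $a^M_\beta\mu^M_\beta$; (iii) after iteratively reducing the $\tilde n_j$'s against (i), (ii), and against each other, representatives with pairwise distinct leading monomials $\mu^N_\gamma$ and leading coefficients $a^N_\gamma$. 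Localize $A$ at the nonzero element $a = \prod_\alpha a^K_\alpha\cdot\prod_\beta a^M_\beta\cdot\prod_\gamma a^N_\gamma$ (together with any further leading coefficients that appear in the course of the reductions), so that every leading coefficient becomes a unit in $A_a$.

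The key claim is that, after this localization, the \emph{initial $A$-submodule} $\init_A(L) \coloneqq \bigl(\init(w) : w \in L\bigr)_A$ coincides with the monomial $A_a$-submodule of $F\otimes_A A_a$ generated by all $P$-multiples of the $\mu^K_\alpha$, all $P'$-multiples of the $\mu^M_\beta$, and the finitely many $\mu^N_\gamma$. Granting this, the Macaulay-style argument from the proof of \autoref{thm_gen_grob}(i) transports verbatim: the complement monomials are $A_a$-linearly independent in $D\otimes_A A_a$ (any dependence would yield an element of $L$ whose leading monomial lies outside $\init_A(L)$, a contradiction), and they span $D\otimes_A A_a$ by the iterated leading-term reduction, which terminates by the well-ordering of monomials. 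Hence $D \otimes_A A_a$ is $A_a$-free, as required.

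The main obstacle is establishing the monomial-submodule description of $\init_A(L)$. This amounts to showing that the three partial Gr\"obner bases glue into a closed ``mixed'' Gr\"obner basis of $L$ as an $A$-submodule, i.e., that S-polynomials across families reduce to zero modulo the three bases after inverting only finitely many additional elements of $A$. The block monomial order is chosen precisely so that a leading-term collision between $\xx^{\mathbf{b}}\tilde m'_\beta$ (whose leading monomial retains a fixed $(x_{s+1},\ldots,x_r)$-shape inherited from $\tilde m'_\beta$) and $\xx^{\mathbf{n}} k_\alpha$ forces both to live in a common $P'$-stratum of $F$, so the cross-family S-polynomial reduces via an S-polynomial already resolved within a single family. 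Termination of this Buchberger-style loop is controlled by the observation that every leading monomial ever produced lies in the monomial support of $\init_P\bigl(K + \sum P\tilde m_i + \sum P\tilde n_j\bigr)$, which is a finitely generated monomial $P$-submodule of $F$ by Noetherianness; this bounds the combinatorics and forces the procedure to stabilize after finitely many reductions.
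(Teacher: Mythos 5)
Your overall set-up matches the paper's: you present $D = F/L$ with $L = K + \sum_i P'\tilde m_i + \sum_j A\tilde n_j$ (the paper writes $L = L_3 + L_2 + L_1$ with $L_i$ finitely generated over $R$, $R'$, $A$ respectively), aim to identify a mixed initial module, and invert the resulting leading coefficients. The two arguments diverge at the crucial finiteness step, and yours has a genuine gap there.

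The paper never runs a Buchberger loop on $L$. Instead it takes the initial $A$-module $H = \iniTerm(L_1+L_2+L_3)$ and exhibits the decomposition $H = \iniTerm(G_1) + \iniTerm(G_2) + \iniTerm(L_3)$, where $G_1$ is the $A$-module generated by those $w_1+w_2+w_3$ with $\iniTerm(w_1) \ge \iniTerm(w_2+w_3)$ and $G_2$ is the $R'$-module generated by those $w_2+w_3$ with $\iniTerm(w_2)\ge\iniTerm(w_3)$. The point of these definitions is that they \emph{force} the bounding monomial in $G_1$ to come from $L_1$, whose generators involve only finitely many distinct monomials; and similarly for $G_2$ relative to $L_2$. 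This is what makes $\iniTerm(G_1)$ finitely generated over $A$ and $\iniTerm(G_2)$ finitely generated over $R'$, so there are only finitely many leading coefficients to invert. The Macaulay-style basis argument then proceeds exactly as in \autoref{thm_gen_grob}(i).

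Your termination argument does not deliver this. You claim the Buchberger-style loop stabilizes because ``every leading monomial ever produced lies in the monomial support of $\init_P\bigl(K+\sum P\tilde m_i+\sum P\tilde n_j\bigr)$, which is a finitely generated monomial $P$-submodule.'' But a finitely generated monomial $P$-submodule of $F$ contains infinitely many distinct monomials (e.g.\ $(x_1)\subset A[x_1]$ contains $x_1, x_1^2,\dots$), so confinement to such a submodule does not bound the number of distinct leading monomials, hence does not bound the number of new $A$-family elements $\mu^N_\gamma$ or the set of leading coefficients you would have to invert. The ascending-chain argument that works for $P$-Gr\"obner bases requires Noetherianity of the module in question, and here $\init_A(L)$ is an $A$-submodule of the non-finitely-generated $A$-module $F$, so there is no ascending chain condition to invoke. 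The cross-family ``stratum'' heuristic you offer is not proved and, even if it handled $K$--versus--$M$ collisions, it says nothing about collisions that feed new elements into the $A$-family.

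The missing idea is exactly the one encoded in the paper's $G_1$: an element of $L$ whose leading term is governed by its $L_1$-part necessarily has its leading monomial in the finite set of monomials appearing in the finitely many generators of $L_1$. That confinement, not membership in a Noetherian $P$-module, is what bounds the $A$-family and makes the whole procedure finite. If you replace your $P$-Noetherian termination claim with this finite-monomial-support observation and define the $A$- and $P'$-strata closures as the paper does (rather than trying to produce them algorithmically via a mixed Buchberger loop), the argument closes up; the block order you chose is unnecessary for this.
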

	\begin{proof}
	We can choose polynomial rings $R = A[x_1,\ldots,x_s, x_{s+1}, \ldots, x_r]$ and $R' = A[x_1,\ldots,x_s] \subset R$
	so that we have surjections $R \twoheadrightarrow C$ and $R' \twoheadrightarrow B$. 
	We can also find a free $R$-module $F$ and finitely generated submodules $L_1 \subset F$, $L_2 \subset F$ and $L_3 \subset F$ over $A$, $R'$ and $R$, respectively, such that we have an isomorphism 
	$$
	D = E/\left(N + M\right) \; \cong \; F/\left(L_1+L_2+L_3\right).
	$$
	We continue using a monomial $>$ order on $F$ as in \autoref{setup_Grobner_deform}.
	Let $H = \iniTerm(L_1+L_2+L_3)\subset F$ be the $A$-module generated by $\iniTerm(w)$ for all $w \in L_1+L_2+L_3$.
	Similarly, we define $\iniTerm(L_1)$, $\iniTerm(L_2)$ and $\iniTerm(L_3)$ as modules over $A$, $R'$ and $R$, respectively.
	
	Let $G_1 \subset F$ be the $A$-submodule generated by the elements 
	$$
	w_1+w_2+w_3 \in F \;\text{ such that }\; w_1\in L_1, w_2\in L_2, w_3\in L_3 \;\text{ and }\; \iniTerm(w_1) \ge \iniTerm(w_2+w_3);
	$$
	notice that $G_1$ is a finitely generated $A$-module because the monomials appearing in $w_2+w_3$ are smaller or equal that some monomial appearing in an element $w_1 \in L_1$, and the list of monomials appearing in elements of $L_1$ is finite.
	Similarly, we define the $R'$-submodule $G_2 \subset F$ generated by the elements 
	$$
	w_2+w_3 \in F \;\text{ such that }\; w_2\in L_2, w_3\in L_3 \;\text{ and }\; \iniTerm(w_2) \ge \iniTerm(w_3);
	$$
	by a similar argument, $G_2$ is finitely generated $R'$-module. 
	
	By construction, we can see that $H = \iniTerm(G_1) + \iniTerm(G_2) + \iniTerm(L_3)$.
	We have that $\iniTerm(G_1)$, $\iniTerm(G_2)$ and $\iniTerm(L_3)$ are finitely generated modules over $A$, $R'$ and $R$, respectively, and thus we may write $$
	\iniTerm(G_1) = \sum_{i = 1}^h A \cdot  c_i \xx^{\alpha_i}, \quad \iniTerm(G_2) = \sum_{i = 1}^k R' \cdot  d_i \xx^{\beta_i} \quad \text{and} \quad \iniTerm(L_3) = \sum_{i = 1}^l R \cdot f_i \xx^{\gamma_i},
	$$ 
	where $c_i,d_i,f_i \in A$ and $\alpha_i, \beta_i, \gamma_i \in \NN^r$.
	Take a nonzero element $a  \in (c_1) \cap \cdots \cap (c_h) \cap (d_1) \cap \cdots \cap (d_k) \cap (f_1) \cap \cdots \cap (f_l) \subset A$.
	Finally, we can utilize the argument in the proof of \autoref{thm_gen_grob}(i) to show that 
	$$
	D \otimes_{A} A_a \; \cong \; F/\left(L_1+L_2+L_3\right) \otimes_{A} A_a
	$$
	is a free $A_a$-module with a basis given by the set of monomials not in $H$.
	\end{proof}	

	We now briefly discuss the extension of the Buchberger criterion into our current setting.
	As long as linear equations are solvable in $A$ (e.g., when $A$ is a PID or a polynomial ring  over a field) this leads to an algorithm for computing Gr\"obner bases (see \cite[\S 4.2]{ADAMS_LOUST_GROEBNER}).
	
	\begin{definition}
		Let $w \in F$ and $ \mathcal{G} = \{w_1,\ldots,w_k\} \in F$.
		We say that \emph{$w$ reduces to $z \in F$ modulo $\mathcal{G}$} if we can write 
		$$
		w = f_1w_1 + \cdots + f_kw_b + z
		$$
		for some elements $f_1,\ldots,f_b \in S$ such that $\iniTerm(f_iw_i) \le \iniTerm(w)$ and no nonzero term of $z$ belongs to the submodule $\left(\iniTerm(w_1),\ldots,\iniTerm(w_b)\right) \subset F$.
	\end{definition}
	
		\begin{theorem}[{cf.~\cite[Theorem 15.8]{EISEN_COMM},~\cite[Theorem 4.2.3]{ADAMS_LOUST_GROEBNER}}]
			\label{thm_Buchberger_crit}
		Let $M \subset F$ be an $R$-submodule, and $\mathcal{G} = \{w_1,\ldots,w_b\} \subset M$ be a generating set of $M$.
		Then $\mathcal{G}$ is a Gr\"obner basis for $M$ if and only if for all $(h_1,\ldots,h_b) \in R^b$ such that
		$$
		h_1\iniTerm(w_1) + \cdots+ h_b\iniTerm(w_b) \,=\,0 \quad \text{\rm(i.e., $(h_1,\ldots,h_b) \in \Syz(\iniTerm(w_1),\ldots,\iniTerm(w_b))$)}
		$$  
		and each $h_i$ is a term, we have that $h_1w_1+\cdots + h_bw_b$ reduces to zero modulo $\mathcal{G}$.
	\end{theorem}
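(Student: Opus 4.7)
The plan is to prove the two implications separately. The forward direction is a direct application of the division algorithm against $\mathcal{G}$; the converse is a Buchberger-style minimality argument adapted to the coefficient ring $A$, where the multiplicativity $\init(fw)=\init(f)\init(w)$ from \autoref{rem_basics_mons}(ii) is the key technical input.

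For the forward direction, I would assume $\mathcal{G}$ is a Gr\"obner basis, fix a term-syzygy $(h_1,\ldots,h_b)$ with $\sum h_i\init(w_i)=0$, and set $w=\sum h_iw_i\in M$. The goal is to run the standard division algorithm against $\mathcal{G}$: starting from $p=w$, while $p\ne 0$, if $\init(p)\in\init(M)=(\init(w_1),\ldots,\init(w_b))$, then \autoref{rem_basics_mons}(i) writes $\init(p)=\sum h_i'\init(w_i)$ with each $h_i'$ a term, and I replace $p$ by $p-\sum h_i'w_i$, strictly decreasing $\init(p)$; otherwise I move $\init(p)$ into a remainder $z$ and continue with $p-\init(p)$. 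The descending chain condition on monomials forces termination, producing $w=\sum f_jw_j+z$ with $\init(f_jw_j)\le\init(w)$ and no nonzero term of $z$ in $\init(M)$. Since $z=w-\sum f_jw_j\in M$, if $z\ne 0$ then $\init(z)\in\init(M)$, a contradiction; hence $z=0$ and $w$ reduces to zero modulo $\mathcal{G}$.

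For the converse, assuming the reduction hypothesis, I would take $w\in M$ and choose a representation $w=\sum g_iw_i$ minimizing the maximal monomial $\mu$ among $\{\init(g_iw_i)\}_i$ (such a representation exists by the descending chain condition). Let $S\coloneqq\{i:\init(g_iw_i)\text{ has monomial }\mu\}$; set $h_i=\init(g_i)$ for $i\in S$ and $h_i=0$ otherwise. By \autoref{rem_basics_mons}(ii) and the domain hypothesis, for $i\in S$ the coefficient of $\mu$ in $g_iw_i$ equals the coefficient of $\init(g_i)\init(w_i)$, while for $i\notin S$ all monomials of $g_iw_i$ lie strictly below $\mu$; summing, the coefficient of $\mu$ in $w$ equals that of $\sum h_i\init(w_i)$. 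If $\sum h_i\init(w_i)\ne 0$, then $\init(w)=\sum h_i\init(w_i)\in(\init(w_1),\ldots,\init(w_b))$, as desired. Otherwise $\sum h_i\init(w_i)=0$, and the hypothesis yields $\sum h_iw_i=\sum f_iw_i$ with each $\init(f_iw_i)$ of monomial strictly below $\mu$. Substituting gives $w=\sum(g_i-h_i+f_i)w_i$; since $\init((g_i-h_i)w_i)$ has monomial below $\mu$ by construction of $h_i$, and each $\init(f_iw_i)$ does likewise, the new representation has all $\init(\cdot\,w_i)$ of monomial strictly below $\mu$, contradicting the minimality of $\mu$.

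The main obstacle will be the bookkeeping in the converse direction: correctly identifying the $h_i$'s so that $\sum h_i\init(w_i)$ captures exactly the monomial-$\mu$ contribution to $w$, and verifying the strict decrease of the maximal monomial after substitution. Both steps rest on $\init(fw)=\init(f)\init(w)$, which in turn depends on $A$ being a domain; in a non-domain coefficient ring the criterion must be augmented to handle zero-divisor and annihilator phenomena.
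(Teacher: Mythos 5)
Your proposal is correct and follows essentially the same approach as the paper: the paper also isolates the top-monomial support $\mathfrak{J}$, observes that when $m > \init(w)$ the resulting vector of leading terms is a syzygy, and applies the reduction hypothesis to drive $m$ strictly down, differing from yours only in phrasing this as an iterative descent (terminating by DCC on monomials) rather than as a minimal-counterexample contradiction. The extra detail you supply for the forward direction (the explicit division algorithm against $\mathcal{G}$) is precisely the standard argument the paper compresses into ``it is clear.''
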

	\begin{proof}
		If $\mathcal{G}$ is a Gr\"obner basis,  then it is clear that any such linear combination, as it is an element in $M$,  reduces to zero modulo $\mathcal{G}$.
		Hence we only need to prove the reverse implication, and we assume that $h_1w_1+\cdots + h_bw_b$ reduces to zero modulo $\mathcal{G}$ for any vector of terms $(h_1,\ldots,h_b) \in \Syz(\iniTerm(w_1),\ldots,\iniTerm(w_b))$.
		
		Choose a nonzero  $w \in M$ and write $w= f_1w_1 + \cdots + f_bw_b$.
		Let $m = \max\{\iniTerm(f_1w_1),\ldots,\iniTerm(f_bw_b)\}$ (i.e.,~the maximal respective monomial appearing among $\iniTerm(f_1w_1),\ldots,\iniTerm(f_bw_b)$).
		Notice that $m \ge \iniTerm(w)$, and that $\iniTerm(w) \in (\iniTerm(w_1),\ldots,\iniTerm(w_b))$ when  $m$ and $\iniTerm(w)$ have the same monomial.
		Therefore we assume that $m > \iniTerm(w)$.
		
		Let $\fJ = \{i \mid \text{ $m$ and $\iniTerm(f_iw_i)$ have the same monomial}\}$.
		We consider the vector $(h_1,\ldots,h_b) \in R^b$ such that  $h_i = \iniTerm(f_i)$ if $i \in \fJ$ and $h_i=0$ otherwise.
		Since $m > \iniTerm(w)$, we necessarily  have that $(h_1,\ldots,h_b) \in \Syz(\iniTerm(w_1),\ldots,\iniTerm(w_b))$.
		Then the hypothesis yields the existence of elements $g_1,\ldots,g_b \in R$ such that 
		$$
		h_1w_1 + \cdots + h_bw_b \;= \; g_1w_1 + \cdots + g_bw_b
		$$
		and $\iniTerm(h_1w_1 + \cdots + h_bw_b) \ge \iniTerm(g_iw_i)$.
		Set $f_i' = f_i-h_i+g_i$ and $m' = \max\{\iniTerm(f_1'w_1),\ldots,\iniTerm(f_b'w_b)\}$.
		By combining everything we obtain the equality
		$$
		w \;=\; f_1'w_1 + \cdots + f_b'w_b
		$$
		and the strict inequality $m > m'$.
		After finitely many reductions, we must obtain an expression $w = \gamma_1w_1 + \cdots + \gamma_bw_b$ with $\gamma_i \in R$ and such that $\iniTerm(w)$ and $\max\{\iniTerm(\gamma_1w_1),\ldots,\iniTerm(\gamma_bw_b)\}$ have the same monomial.
		This concludes the proof that $\mathcal{G}$ is a Gr\"obner basis for $M$.
	\end{proof}

	We now apply the above theorem to study the behavior of initial submodules under taking Frobenius powers in a positive characteristic setting. 
	If $A$ contains a field of characteristic $p > 0$ and $M \subset F$ is an $R$-submodule, we denote by $M^{[p]} \subset F$ the $R$-submodule generated by the $p$-th power $w^p = (f_1^p,\ldots,f_\ell^p)$ for all $w = (f_1,\ldots,f_\ell) \in M$.

	\begin{corollary}	
		\label{cor_Grob_Frob_pow}
		Suppose that $A$ contains a field of characteristic $p >0$.
		Let $M \subset F$ be an $R$-submodule and suppose that $\iniTerm(M) = \left(a_1m_1,\ldots,a_bm_b\right)$ where $0\neq a_i \in A$ and $m_i \in F$ is a monomial.
		Choose $0 \neq a \in (a_1) \cap \cdots \cap (a_b) \subset A$.
		Then, for all $e \ge 0$,  we have the equality $\iniTerm(M^{[p^e]} \otimes_{A} A_a) = \iniTerm(M)^{[p^e]}\otimes_{A} A_a$ and  that $F/M^{[p^e]} \otimes_{A} A_a$ is a free $A_a$-module.
	\end{corollary}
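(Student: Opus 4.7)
The plan is to show that, after localizing at $a$, the Frobenius powers $\{w_1^{[p^e]}, \ldots, w_b^{[p^e]}\}$ of a Gr\"obner basis $\{w_1, \ldots, w_b\}$ of $M$ with $\init(w_i) = a_i m_i$ form a Gr\"obner basis of $M^{[p^e]}$ whose leading terms are $\{a_i^{p^e} m_i^{[p^e]}\}$. Both conclusions of the corollary will then follow at once by applying \autoref{thm_gen_grob}(i) to $M^{[p^e]}$.

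After localizing at $a$ and rescaling each $w_i$ by the unit $a_i^{-1} \in A_a$, I may assume $\init(w_i) = m_i$ over $A_a[\xx]$. Since $A$ contains a field of characteristic $p$, the identities $(w+v)^{[p^e]} = w^{[p^e]} + v^{[p^e]}$ and $(rw)^{[p^e]} = r^{p^e} w^{[p^e]}$, applied coordinate-wise in $F$, immediately give the generation statements
\[
M^{[p^e]} \;=\; (w_1^{[p^e]}, \ldots, w_b^{[p^e]})_R \qquad \text{and} \qquad \init(M)^{[p^e]} \otimes_A A_a \;=\; (m_1^{[p^e]}, \ldots, m_b^{[p^e]}).
\]
Writing $m_i = \xx^{\beta_i} e_{k_i}$, the position-over-term compatibility of $>$ forces the coordinates of $w_i$ at positions $k < k_i$ to vanish. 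Since taking $p^e$-th powers preserves the monomial order (if $\xx^\alpha > \xx^\beta$ then $\xx^{p^e\alpha} > \xx^{p^e\beta}$ by multiplicativity), the $k_i$-th coordinate of $w_i^{[p^e]}$ has leading monomial $\xx^{p^e\beta_i}$, yielding $\init(w_i^{[p^e]}) = m_i^{[p^e]}$.

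Next I verify Buchberger's criterion (\autoref{thm_Buchberger_crit}) for $\{w_i^{[p^e]}\}$. Since the leading terms are monomials, any term-valued syzygy decomposes as a sum of Koszul pair syzygies, so it suffices to examine $S$-elements of pairs $(i,j)$ with $k_i = k_j$. Letting $u_{ij}, v_{ij}$ be the monomials in $R$ such that $u_{ij} m_i = v_{ij} m_j = \mathrm{lcm}(m_i, m_j)$, the characteristic $p$ identities yield the key equality
\[
u_{ij}^{p^e}\, w_i^{[p^e]} \;-\; v_{ij}^{p^e}\, w_j^{[p^e]} \;=\; \bigl(u_{ij} w_i - v_{ij} w_j\bigr)^{[p^e]}.
\]
A standard representation $u_{ij} w_i - v_{ij} w_j = \sum_k f_{ijk} w_k$ with $\init(f_{ijk} w_k) \le \init(u_{ij} w_i - v_{ij} w_j)$, which exists since $\{w_k\}$ is a Gr\"obner basis, lifts under $[p^e]$ to $\sum_k f_{ijk}^{p^e} w_k^{[p^e]}$, and the bounds on initial terms are preserved because $[p^e]$ respects the monomial order. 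This confirms Buchberger's criterion and gives $\init(M^{[p^e]} \otimes_A A_a) = (m_1^{[p^e]}, \ldots, m_b^{[p^e]}) = \init(M)^{[p^e]} \otimes_A A_a$.

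Finally, \autoref{thm_gen_grob}(i) applied to $M^{[p^e]}$ over $A_a$ (whose initial module is monomial with unit coefficients) yields that $F/M^{[p^e]} \otimes_A A_a$ is $A_a$-free. The principal technical hurdle is the Buchberger verification, i.e.~ensuring that a standard representation of an $S$-element lifts cleanly under $[p^e]$; this rests on the characteristic $p$ Frobenius identities together with the multiplicativity of the monomial order.
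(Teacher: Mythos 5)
Your proof is correct and follows essentially the same route as the paper's: after inverting $a$ so that the leading coefficients become units, both arguments show that the Frobenius powers of a Gr\"obner basis of $M$ form a Gr\"obner basis of $M^{[p^e]}$ by verifying the Buchberger criterion (\autoref{thm_Buchberger_crit}), exploiting that the divided Koszul syzygies of the monomial leading terms and the standard representations both behave well under coordinatewise Frobenius, and then invoking \autoref{thm_gen_grob}(i). The only cosmetic differences are that the paper argues for $p$ and inducts up to $p^e$ whereas you handle $p^e$ directly, and you rescale to make each $\iniTerm(w_i)$ a monic monomial before proceeding; neither affects the substance.
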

	\begin{proof}
		Since taking $p$-th powers commutes with localization, we may substitute $A$ by $A_a$.
		Thus we assume that $\iniTerm(M) = (m_1,\ldots,m_b) \subset F$ is a monomial submodule.
		Choose $w_1,\ldots,w_b \in M$ with $m_i = \iniTerm(w_i)$, and set $\mathcal{G} = \{w_1,\ldots,w_b\}$.
		We have that $\Syz(m_1,\ldots,m_b)$ is generated by  the divided Koszul relations of the $m_i$'s; the same proof of \cite[Lemma 15.1]{EISEN_COMM} applies in our case.
		As these relations commute with taking $p$-th powers, we have that $\Syz(m_1^p,\ldots,m_b^p) = \Syz(m_1,\ldots,m_b)^{[p]}$. 
		By \autoref{thm_Buchberger_crit}, for any vector of terms $(h_1,\ldots,h_b) \in \Syz(m_1,\ldots,m_b)$ we have that $h_1w_1+\cdots + h_bw_b$ reduces to zero modulo $\mathcal{G}$, and in particular this implies that $h_1^pw_1^p+\cdots+h_b^pw_b^p$ reduces to zero modulo $\mathcal{G}^p = \{w_1^p,\ldots,w_b^p\}$.
		But then \autoref{thm_Buchberger_crit} implies that $\mathcal{G}^p$ is a Gr\"obner basis for $M^{[p]}$, and this gives the claimed equality $\iniTerm(M^{[p]}) = \iniTerm(M)^{[p]}$.
		
		By induction, we obtain that $\iniTerm(M^{[p^e]}) = \iniTerm(M)^{[p^e]}$ for all $e \ge 0$.
		Therefore \autoref{thm_gen_grob} yields that $F/M^{[p^e]}$ is a free $A$-module for all $e \ge 0$.
	\end{proof}

	\section{Generic square-free Gr\"obner degenerations}
	\label{sect_sqr_grob}
	
	In this section, we also provide a specific generic freeness result for local cohomology modules under the existence of a square-free Gr\"obner degeneration.
	Throughout this section we use the following specialization of  \autoref{setup_Grobner_deform}.

	\begin{setup}
		\label{setup_sqfree_Grob}
		Let $A$ be a Noetherian domain,  $R =  A[x_1,\ldots,x_r]$ be a positively graded polynomial ring over $A$, and $\mm = [R]_+= (x_1,\ldots,x_r)$ be the graded irrelevant ideal.
		As in \autoref{setup_Grobner_deform}, let $>$ be a monomial order on $R$.
	\end{setup}

	The main goal of this section is to prove the following theorem. 
	It is inspired by the work of Conca and Varbaro \cite{CONCA_VARBARO} on square-free Gr\"obner degenerations.

	\begin{theorem}
		\label{thm_sqr_free_deg}
		Assume \autoref{setup_sqfree_Grob} and suppose that $A$ contains a field $\kk$.
		Let $I \subset R$ be a homogeneous ideal and
		$\iniTerm(I) = \big(a_1\xx^{\beta_1}, \,\ldots\,, a_b\xx^{\beta_b} \big)$ be its corresponding initial ideal, where $0 \neq a_i \in A$ and $\beta_i \in \NN^r$.
		Suppose that each monomial $\xx^{\beta_i}$ is square-free.
		Choose $0 \neq a \in (a_1) \cap \cdots \cap (a_b) \subset A$.
		Then $\HL^i(R/I) \otimes_{A} A_a$ is $A_a$-free for all $i \ge 0$.
	\end{theorem}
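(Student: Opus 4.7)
The strategy is to combine the generic flat Gröbner degeneration of \autoref{thm_gen_grob}(ii) with the particularly good behavior of local cohomology for Stanley--Reisner rings, in the spirit of Conca--Varbaro's work on square-free degenerations \cite{CONCA_VARBARO}. After replacing $A$ by $A_a$, I may assume that $J := \iniTerm_>(I) = (\xx^{\beta_1}, \ldots, \xx^{\beta_b})$ is a genuine square-free monomial ideal in $R$. The argument then has two parts: \textbf{(A)} establish $A$-freeness of the Stanley--Reisner local cohomology $\HL^i(R/J)$; and \textbf{(B)} transfer this freeness to $\HL^i(R/I)$ through the flat degeneration.

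For step \textbf{(A)} I exploit the hypothesis that $A$ contains the field $\kk$. Writing $R = \kk[\xx] \otimes_\kk A$ and $J = J_0 \cdot R$ with $J_0 = (\xx^{\beta_1}, \ldots, \xx^{\beta_b}) \subset \kk[\xx]$, the flatness of $A$ over $\kk$ yields
\[
\HL^i(R/J) \;\cong\; \HL^i(\kk[\xx]/J_0) \otimes_\kk A.
\]
The first factor is a graded $\kk$-vector space (with graded components computable by Hochster's formula), so the whole module is $A$-free. This is precisely where the ``$A$ contains a field'' hypothesis is indispensable; cf.~\autoref{rem_triang_PP_2}.

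For step \textbf{(B)}, I invoke \autoref{thm_gen_grob}(ii) to produce a positively graded ring $S = R[t]$ and a graded ideal $E \subset S$ such that $S/E$ is $A[t]$-free, with $(S/E)/(t) \cong R/J$ and $(S/E) \otimes_{A[t]} A[t,t^{-1}] \cong (R/I) \otimes_A A[t,t^{-1}]$, and apply $\HL^i$ (taken with respect to $\mm \subset R \subset S$) to this family. Flat base change along $A[t] \to A[t,t^{-1}]$ immediately gives $\HL^i(S/E) \otimes_{A[t]} A[t,t^{-1}] \cong \HL^i(R/I) \otimes_A A[t,t^{-1}]$. The main obstacle is to control the non-flat specialization at $t=0$, that is, to prove that
\[
\HL^i(S/E) \otimes_{A[t]} A[t]/(t) \;\xrightarrow{\,\sim\,}\; \HL^i(R/J)
\]
is an isomorphism; this is the relative cohomological ``fiber-fullness'' of the square-free degeneration. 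My plan is to reduce modulo each prime $\pp \subset A$, where the statement becomes Conca--Varbaro's theorem over the residue field $\kappa(\pp)$ (the reduction is legitimate because our choice of $a$ makes each $a_i$ a unit, so $\iniTerm$ commutes with passage to any $\kappa(\pp)$), and then to lift the fiberwise isomorphism to the total module using the $A[t]$-freeness of $S/E$ together with the $A$-freeness obtained in step \textbf{(A)}. Once fiber-fullness is in place, every graded component of $\HL^i(R/I)$ has constant fiber rank over $\Spec(A)$ matching that of $\HL^i(R/J)$, and a standard constancy-of-rank argument combined with the finite generation of each graded component $[\HL^i(R/I)]_\nu$ as an $A$-module forces $A$-freeness.
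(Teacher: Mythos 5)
Your overall strategy is the same as the paper's: both use the flat Gr\"obner degeneration of \autoref{thm_gen_grob}(ii), both invoke Conca--Varbaro's cohomological fullness for square-free monomial ideals (the paper's \autoref{lem_sqr_free}), and both reduce to establishing fiber-fullness of the degeneration family over $B = A[t]$ via the criterion in \autoref{thm_fib_full_mod}. Your step \textbf{(A)} (writing $\HL^i(R/J) \cong \HL^i(\kk[\xx]/J_0) \otimes_\kk A$ to get $A$-freeness of the special fiber) is correct and does appear in the paper's argument.

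The gap is in the final sentence: ``a standard constancy-of-rank argument combined with the finite generation of each graded component $[\HL^i(R/I)]_\nu$ as an $A$-module forces $A$-freeness.'' This is not true. A finitely generated $A$-module with constant fiber rank over the Noetherian domain $A$ is projective (equivalently locally free), but not free in general --- a non-principal ideal in a Dedekind domain with nontrivial class group has constant rank $1$ and is not free. Fiber-fullness by itself only yields $A$-\emph{flatness} of $\HL^i(R/I)$; freeness over arbitrary Noetherian $A$ requires extra work, and this is precisely the part the statement is about (it is not asserting mere generic flatness). The paper closes this gap as follows: after proving $S/J$ is fiber-full over $B=A[t]$, one obtains split short exact sequences $0 \to \HL^i(S/J)(0,-1) \xrightarrow{\cdot t} \HL^i(S/J) \to \HL^i(R/\iniTerm(I)) \to 0$; fixing a degree $\mu$, setting $M = [\HL^i(S/J)]_{(\mu,*)}$ and $N = [\HL^i(R/\iniTerm(I))]_{(\mu,*)}$ (a finitely generated graded $B$-module and its cokernel), one uses the $A$-freeness of $N$ (your step \textbf{(A)}) and the resulting decomposition $M = W \oplus tM$ to run a graded Nakayama argument producing an explicit $B$-basis of $M$; and finally specializes along $B \twoheadrightarrow B/(t-1) \cong A$ to get $A$-freeness of $\HL^i(R/I)$. (Bass's theorem on big projectives gives a shortcut in the non-finitely-generated case, but the Nakayama argument is what makes the conclusion uniform.) You should replace your last step with an argument of this kind rather than appeal to constancy of rank.
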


	We point out that the theorem is sharp in the following sense.
	
	\begin{remark}[{\cite[Remark 3]{REISNER_CM}}]
		\label{rem_triang_PP_2}
		The condition that $A$ contains a field cannot be avoided in \autoref{thm_sqr_free_deg}.
		Let $R = \ZZ[x_1,\ldots,x_6]$ and take the square-free monomial ideal 
		$$
		I \;=\; \left(x_1x_2x_3, \,x_1x_2x_4,\, x_1x_3x_5,\, x_1x_4x_6,\, x_1x_5x_6, \, x_2x_3x_6,\, x_2x_4x_5,\, x_2x_5x_6,\, x_3x_4x_5,\, x_3x_4x_6\right)
		$$
		corresponding to the minimal triangulation of the projective plane.
		By \cite[Remark 3]{REISNER_CM}, for a field $\KK$, we have that $R/I \otimes_\ZZ \KK$ is Cohen-Macaulay if and only if $\text{char}(\KK) \neq 2$.
		Therefore, it cannot happen that $\HL^i(R/I)$ is $\ZZ$-flat for all $i \ge 0$.
	\end{remark}
	
	We  state the following result that will be useful for us.
	
	\begin{theorem}[{\cite[Theorem A]{FIBER_FULL}}]
		\label{thm_fib_full_mod}
		Let $B$ be a Noetherian ring and $S$ be a positively graded finitely generated $B$-algebra.
		Let $\MM = [S]_+$ be the graded irrelevant ideal of $S$.
		Let $M$ be a finitely generated graded $S$-module and suppose that $M$ is flat over $B$. 
		Then the following conditions are equivalent: 
		\begin{enumerate}[\rm (i)]
			\item $\HH_{\MM}^i(M)$ is a flat $B$-module for all $i \ge 0$.
			\item $\HH_{\MM}^i\left(M \otimes_{B} B_\pp/\pp^qB_\pp\right)$ is $(B_\pp/\pp^qB_\pp)$-flat for all $i \ge 0$,  $q \ge 1$ and $\pp \in \Spec(B)$.
			\item The natural map $\HH_{\MM}^i\left(M \otimes_{B} B_\pp/\pp^qB_\pp\right) \rightarrow \HH_{\MM}^i\left(M \otimes_{B} B_\pp/\pp B_\pp\right)$ is surjective for all $i \ge 0$, $q \ge 1$ and $\pp \in \Spec(B)$.
		\end{enumerate}
		Moreover, when any of the above equivalent conditions is satisfied, we have the following base change isomorphism
		$$
		\HH_{\MM}^i(M) \otimes_{B} C \;\xrightarrow{\cong}\; \HH_{\MM}^i(M \otimes_{B} C)
		$$
		for all $i \ge 0$ and any $B$-algebra $C$.
	\end{theorem}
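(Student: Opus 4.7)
The plan is to combine the Gröbner flat degeneration of \autoref{thm_gen_grob} with the fiber-full criterion \autoref{thm_fib_full_mod}, using the squarefree hypothesis and Hochster's formula over the coefficient ring $A \supset \kk$.

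After localizing at $a$, I may assume $A = A_a$; then $R/I$ is $A$-free by \autoref{thm_gen_grob}(i). Since $I$ is homogeneous, I replace each element $w_i$ of a Gröbner basis by its homogeneous component of degree $\deg\iniTerm(w_i)$, obtaining a homogeneous Gröbner basis $\{w_1,\ldots,w_b\} \subset I$. Applying \autoref{thm_gen_grob}(ii), I form $E = (\hom(w_1), \ldots, \hom(w_b)) \subset S := A[t][x_1,\ldots,x_r]$; because each $w_i$ is homogeneous in the given grading of $R$, each $\hom(w_i)$ is homogeneous in the corresponding grading of $S$ with $\deg t = 0$, so $E$ is a homogeneous ideal and $M := S/E$ is a graded $S$-module. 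The conclusions of \autoref{thm_gen_grob}(ii) yield that $M$ is $A[t]$-free with $M \otimes_{A[t]} A[t]/(t) \cong R/\iniTerm(I)$ and $M \otimes_{A[t]} A[t,t^{-1}] \cong (R/I) \otimes_A A[t,t^{-1}]$.

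Next, I apply \autoref{thm_fib_full_mod} to $M$ over $B := A[t]$ with $\MM = [S]_+ = (x_1,\ldots,x_r)$, aiming to deduce $A[t]$-flatness of $\HH_\MM^i(M)$. I verify criterion (iii): for every $\pp \in \Spec A[t]$ and $q \ge 1$, the natural map $\HH_\MM^i(M/\pp^q M) \twoheadrightarrow \HH_\MM^i(M/\pp M)$ is surjective. For primes $\pp$ with $t \notin \pp$, property (b) of the degeneration identifies $M \otimes_{A[t]} A[t,t^{-1}]$ with the pullback of $R/I$, and the surjectivity reduces to Grothendieck's generic freeness (\autoref{cor_gen_free_grad}). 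The decisive case is $\pp = (t)$, whose special fiber is the squarefree monomial quotient $R/\iniTerm(I)$: here the squarefree assumption together with $\kk \subset A$ is essential. Adapting the Conca--Varbaro squarefree Gröbner-degeneration argument \cite{CONCA_VARBARO} to the coefficient ring $A \supset \kk$ (as in \cite{FIBER_FULL}), together with Hochster's description of the local cohomology of $R/\iniTerm(I)$, yields the required surjectivity $\HH_\MM^i\bigl(S/(E + t^{q+1}S)\bigr) \twoheadrightarrow \HH_\MM^i\bigl(S/(E + t^q S)\bigr)$ for every $q \ge 1$. The theorem then gives that $\HH_\MM^i(M)$ is $A[t]$-flat and the base-change isomorphism applies.

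Finally, I promote flatness to freeness. By Hochster's formula for squarefree monomial quotients over $A \supset \kk$, each graded piece of $\HH_\mm^i(R/\iniTerm(I))$ decomposes as a direct sum of modules of the form $\tilde H^j(\Delta_W; \kk) \otimes_\kk A$, hence is $A$-free. Each graded piece $[\HH_\MM^i(M)]_\nu$ is a finitely generated flat $A[t]$-module whose specialization at $t=0$ is this explicit $A$-free module; the Hochster-type basis of the special fiber lifts through the flat family to an $A[t]$-free basis of $[\HH_\MM^i(M)]_\nu$. Specializing at $t=1$ via property (b) then yields $[\HH_\mm^i(R/I)]_\nu \cong [\HH_\MM^i(M)]_\nu \otimes_{A[t]} A[t]/(t-1)$ as an $A$-free module, completing the proof. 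The main obstacle is the verification of criterion (iii) at $\pp = (t)$: this is the heart of squarefree fiber-fullness, adapted from field coefficients to $A \supset \kk$, and the sharpness hypotheses of the theorem (as in \autoref{rem_triang_PP_2}) become essential precisely here.
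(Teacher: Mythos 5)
Your proposal does not address the statement it was supposed to prove. The statement is \autoref{thm_fib_full_mod}: for a Noetherian ring $B$, a positively graded finitely generated $B$-algebra $S$ with irrelevant ideal $\MM$, and a finitely generated graded $B$-flat $S$-module $M$, the equivalence of (i) flatness of all $\HH_\MM^i(M)$ over $B$, (ii) flatness of $\HH_\MM^i(M\otimes_B B_\pp/\pp^q B_\pp)$ over the Artinian fibers, and (iii) surjectivity of the comparison maps to the closed fibers, together with the arbitrary base-change isomorphism. What you wrote instead is an outline of the proof of \autoref{thm_sqr_free_deg} (the square-free Gr\"obner degeneration theorem), and in the middle of it you invoke \autoref{thm_fib_full_mod} itself as a black box (``I apply \autoref{thm_fib_full_mod} to $M$ over $B:=A[t]$\dots''). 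Relative to the assigned statement this is circular: you assume exactly what was to be proved. Note also that the general statement has nothing to do with Gr\"obner bases, square-free monomials, Hochster's formula, or a field contained in the base --- it holds over any Noetherian $B$ --- so none of the machinery in your outline is even of the right shape to establish it. In the paper this theorem carries no internal proof; it is quoted from the reference {\cite[Theorem A]{FIBER_FULL}}, where the argument runs through a deformation-theoretic/flatness analysis (local criterion of flatness, graded duality, and the inverse systems $\Ext_S^i(S/\MM^n,M)$), which is the kind of input a genuine proof would need.

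Separately, even read as a proof of \autoref{thm_sqr_free_deg}, your sketch has gaps compared with the paper's argument: for primes $P$ with $t\notin P$ the needed surjectivity over $B_P/P^qB_P$ does not ``reduce to Grothendieck's generic freeness'' --- the paper derives it from cohomological fullness of $\overline{R}/\aaa$ via Conca--Varbaro, using that $\iniTerm(\aaa)$ is square-free in the fiber, exactly as in the case $t\in P$; the homogenization $\hom(w_i)$ is not homogeneous with $\deg t=0$ (the paper makes $S=R[t]$ bigraded with $\bideg(t)=(0,1)$); and the passage from $B$-flatness to $B$-freeness is handled in the paper by finite generation of the graded pieces plus Bass's theorem and a Nakayama argument on the split short exact sequences induced by multiplication by $t$, not by lifting a Hochster-type basis through the family.
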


		\begin{definition}
		Under the notation of \autoref{thm_fib_full_mod}, we say that  a finitely generated graded $S$-module is \emph{fiber-full over B} if $M$ is $B$-flat and $\HH_{\MM}^i(M)$ is $B$-flat for all $i \ge 0$.
	\end{definition}

	Condition (iii) of the above theorem is a relaxation of the closely related notions of \emph{algebras having liftable local cohomology} introduced by Koll\'ar and Kov\'acs \cite{KOLLAR_KOVACS} and \emph{cohomologically full rings} introduced by Dao, De Stefani and Ma \cite{COHOM_FULL_RINGS}.
	The term \emph{fiber-full} was coined by Varbaro in \cite[Definition 3.8]{CONFERENCE_LEVICO}.
	Further developments were made with the construction of the \emph{fiber-full scheme} \cite{FIB_FULL_SCHEME, cid2022local}.
	
In order to state the following lemma, we recall that 
a positively graded ring $R$ over a field 
is \emph{cohomologically full} if for any equicharateristic local ring $T$ a surjection 
$\phi \colon (T, \mathfrak{n}) \twoheadrightarrow R_\MM$
which induces an isomorphism $T/\sqrt{0} T \cong R_\MM/\sqrt{0}R_\MM$ must also induce surjective natural maps $\HH_{\mathfrak{n}}^i (T) \twoheadrightarrow \HH^i_{\MM} (R)$ for all $i$.
	
	\begin{lemma}[{\cite[Proposition 2.3, Proposition 3.3]{CONCA_VARBARO}}]
		\label{lem_sqr_free}
		Let $S$ be a positively graded polynomial ring over a field and $>$ be a monomial order on $S$.
		Let $\aaa \subset S$ be a homogeneous ideal.
		If $\iniTerm(\aaa)$ is a square-free monomial ideal, then both  $S/\aaa$ and $S/\iniTerm(\aaa)$ are cohomologically full rings.
	\end{lemma}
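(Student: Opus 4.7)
The plan is to establish the two claims in sequence: first $S/\iniTerm(\aaa)$ by combinatorial means, then $S/\aaa$ by Gr\"obner degeneration combined with \autoref{thm_fib_full_mod}.

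For $S/\iniTerm(\aaa)$, which is a (reduced) Stanley-Reisner ring, cohomological fullness can be verified via Hochster's combinatorial description of the $\ZZ^r$-graded local cohomology through the \v{C}ech complex on $x_1,\ldots,x_r$. Given a local surjection $\phi \colon (T,\nn) \twoheadrightarrow (S/\iniTerm(\aaa))_\MM$ with $T/\sqrt{0}T \cong (S/\iniTerm(\aaa))_\MM$, the induced surjection $\HH_\nn^i(T) \twoheadrightarrow \HH_\MM^i(S/\iniTerm(\aaa))$ follows by a transparent comparison of \v{C}ech cochains; alternatively one may invoke $F$-purity of Stanley-Reisner rings in positive characteristic and reduce modulo $p$ in characteristic zero.

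For $S/\aaa$, I would use the graded version of \autoref{thm_gen_grob} (over the field $A=k$) to produce a flat $k[t]$-module $M \coloneqq S[t]/\hom_{\omega,\dd}(\aaa)$ whose fiber at $t=0$ is $S/\iniTerm(\aaa)$ and whose restriction to $\Spec k[t,t^{-1}]$ is the trivial family $S/\aaa \otimes_k k[t,t^{-1}]$. Then I would apply \autoref{thm_fib_full_mod} with $B = k[t]$ to deduce $k[t]$-freeness of $\HH_\MM^i(M)$. Condition (iii) of that theorem is trivial at the generic point of $k[t]$; at any maximal ideal $\pp \neq (t)$ the family is the trivial product after localization (by part (b) of \autoref{thm_gen_grob}, since $t$ is a unit in $k[t]_\pp$), so (iii) reduces to the reduction map $k[t]_\pp/\pp^q k[t]_\pp \twoheadrightarrow k[t]_\pp/\pp k[t]_\pp$ tensored with $S/\aaa$; and at $\pp = (t)$ the required surjectivity $\HH_\MM^i(M/t^qM) \twoheadrightarrow \HH_\MM^i(S/\iniTerm(\aaa))$ is precisely an instance of the already-proved cohomological fullness of $S/\iniTerm(\aaa)$ applied to the nilpotent thickenings $M/t^qM \twoheadrightarrow S/\iniTerm(\aaa)$. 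Hence $\HH_\MM^i(M)$ is $k[t]$-free and the base change isomorphism of \autoref{thm_fib_full_mod} yields $\HH_\MM^i(S/\aaa) \cong \HH_\MM^i(M) \otimes_{k[t]} k[t]/(t-1)$; in particular $S/\aaa$ and $S/\iniTerm(\aaa)$ share the same $\MM$-local cohomology Hilbert function.

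To upgrade this to cohomological fullness of $S/\aaa$, given any test surjection $\phi \colon (T,\nn) \twoheadrightarrow (S/\aaa)_\MM$ satisfying the defining hypotheses, I would construct a $k[t]$-flat deformation $\widetilde T$ of $T$ surjecting onto $M$ whose reduction modulo $t$ is an admissible thickening of $S/\iniTerm(\aaa)$, invoke cohomological fullness at the special fiber, and transfer the resulting surjectivity back to $\phi$ by combining $k[t]$-freeness of $\HH_\MM^i(M)$ with Nakayama's lemma. The main obstacle is precisely this lifting step: arranging $\widetilde T$ so that its reduction surjects onto $S/\iniTerm(\aaa)$ (and not onto a strictly larger quotient) requires choosing a presentation of $T$ compatibly with the Gr\"obner degeneration, which is the technical heart of \cite[Proposition~3.3]{CONCA_VARBARO}.
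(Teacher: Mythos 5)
You should first be aware that the paper offers no proof of this lemma: it is imported wholesale from Conca--Varbaro, so your sketch has to stand on its own, and it does not. For $S/\iniTerm(\aaa)$, the primary argument you offer --- that the surjection $\HH^i_{\nn}(T)\twoheadrightarrow \HH^i_{\MM}\bigl((S/\iniTerm(\aaa))_\MM\bigr)$ follows from ``a transparent comparison of \v{C}ech cochains'' --- is not an argument in the required generality: the test ring $T$ in the definition is an \emph{arbitrary} equicharacteristic local ring with the same reduction; it carries no grading, no monomial structure, and no distinguished \v{C}ech complex to compare with. Your fallback (F-purity of Stanley--Reisner rings in characteristic $p$, reduction mod $p$ in characteristic $0$) is the correct route, but as stated it is a citation rather than a proof: it relies on the nontrivial theorem that F-pure (more generally F-full/anti-nilpotent) rings are cohomologically full, and the characteristic-zero case cannot be handled by naively reducing an arbitrary local test surjection mod $p$ --- one needs, e.g., the characterization of cohomological fullness via a single surjection $P/J\twoheadrightarrow P/\aaa$ with $\sqrt{J}=\sqrt{\aaa}$, or Du Bois-type input.

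The more serious gap concerns $S/\aaa$, which is the actual content of the cited Proposition 3.3. Your degeneration step is fine as far as it goes: combining \autoref{thm_gen_grob} with \autoref{thm_fib_full_mod}, using fullness of the special fiber at $(t)$ and the trivial-product structure away from $t$, does show that $\HH^i_\MM\bigl(S[t]/\hom_{\omega,\dd}(\aaa)\bigr)$ is $k[t]$-free and hence that $S/\aaa$ and $S/\iniTerm(\aaa)$ have the same local cohomology Hilbert functions (this is essentially the special case of \autoref{thm_sqr_free_deg} over a field). But equality of Hilbert functions is not cohomological fullness, and the ``upgrade'' you then describe --- lifting an arbitrary test surjection $(T,\nn)\twoheadrightarrow (S/\aaa)_\MM$ to a $k[t]$-flat thickening of the whole family --- is exactly the point you concede you cannot carry out. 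That concession is the theorem: there is no reason such a compatible flat deformation of an arbitrary (non-graded, not finitely generated) local ring $T$ should exist, and this is not how the result is proved in the literature; the published argument runs instead through F-singularity theory (F-fullness of the Stanley--Reisner special fiber, the fact that this property deforms along the degeneration parameter and localizes, hence passes to $S/\aaa$), with reduction mod $p$ handling characteristic zero. So the proposal reproduces the fiber-full bookkeeping that the paper performs elsewhere, but it does not prove the lemma, and since the paper itself only cites Conca--Varbaro here, the honest options are to do the same or to supply the F-purity/F-fullness argument in full.
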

	
	After recalling these needed results we are ready for the proof \autoref{thm_sqr_free_deg}.
	
	\begin{proof}[Proof of \autoref{thm_sqr_free_deg}]
		Let $\delta_i = \deg(x_i) > 0$.
		To simplify the notation, we substitute $A$ by $A_a$ (where $0 \neq a \in A$ is the chosen element), hence we assume that $\iniTerm(I) = (\xx^{\beta_1},\,\ldots\,, \xx^{\beta_b}) \subset R$. 
		By \autoref{thm_gen_grob}, $R/I$ is $A$-free and there exists an ideal $J \subset S=R[t]=A[t, x_1, \ldots, x_r]$ such that the following three conditions are satisfied: (a) $S/J \otimes_{A[t]} A[t]/(t) \cong R/\iniTerm(I)$, (b) $S/J \otimes_{A[t]} A[t, t^{-1}] \cong R/I \otimes_{A} A[t,t^{-1}]$, and (c) $S/J$ is a free $A[t]$-module.
		Furthermore, we can assume that $J \subset S$ is bihomogeneous and $S$ is bigraded with $\bideg(x_i) = (\delta_i, \omega_i)$ and $\bideg(t) = (0, 1)$, where $\omega = (\omega_1,\ldots,\omega_r)$ is the weight vector from \autoref{thm_gen_grob}.
		
		Let $B = A[t]$.
		We first show that $S/J$ is fiber-full over $B$.
		Let $P \in \Spec(B)$, $\pp = P \cap A \in \Spec(A)$, and $\bFF =  A_\pp/\pp A_\pp$.
		Set $\overline{B} = B \otimes_{A} \bFF=\bFF[t]$ and $\overline{R} = R \otimes_A \bFF = \bFF[x_1,\ldots,x_r]$, and consider the prime ideal $\overline{P} = P \overline{B} \in \Spec(\overline{B})$.
		We may assume that $>$ is also a monomial order on $\overline{R}$.
		Let $\aaa = I \overline{R} \subset \overline{R}$ and  $\bbb = \iniTerm(I)\overline{R} = (\xx^{\beta_1}, \ldots, \xx^{\beta_b}) \subset \overline{R}$.
		It is clear that $\bbb \subseteq \iniTerm(\aaa)$.
		From the three conditions (a), (b), (c) that $S/J$ satisfies, it follows that $\overline{R}/\aaa$ and $\overline{R}/\bbb$ have the same Hilbert function.
		But since the Hilbert function of $\overline{R}/\aaa$ also coincides with the one of $\overline{R}/\iniTerm(\aaa)$, we obtain the equality $\iniTerm(\aaa) = \bbb = (\xx^{\beta_1}, \ldots, \xx^{\beta_b}) \subset \overline{R}$.
		
		Let $\nnn  = P + \mm \subset R$.
		We analyze the following two cases: 
		\begin{enumerate}
			\item Suppose that $t \in \overline{P}$ (i.e., $\overline{P} = (t)$).
			In this case, it follows that $S/J \otimes_{B} B_P/PB_P \cong \overline{R}/\iniTerm(\aaa)$.
			Due to \autoref{lem_sqr_free}, we obtain the natural surjection 
			$
			\HH_{\nnn}^i(S/J \otimes_{B} B_P/P^qB_P)	 \surjects \HH_{\nnn}^i(S/J \otimes_{B} B_P/PB_P) \cong \HH_{\nnn}^i(\overline{R}/\iniTerm(\aaa))
			$			
			for all $i \ge 0, q \ge 1$.
			\item Suppose that $t \not\in \overline{P}$.
			Then $S/J \otimes_{B} B_P/PB_P \cong \overline{R}/\aaa$.
			Similarly, \autoref{lem_sqr_free} yields the natural surjection
			$
			\HH_{\nnn}^i(S/J \otimes_{B} B_P/P^qB_P)	 \surjects \HH_{\nnn}^i(S/J \otimes_{B} B_P/PB_P) \cong \HH_{\nnn}^i(\overline{R}/\aaa)
			$			
			for all $i \ge 0, q \ge 1$.
		\end{enumerate}
		Therefore, for all $i \ge 0, q \ge 1, P\in \Spec(B)$, since 
		$
		\HH_{\nnn}^i(S/J \otimes_{B} B_P/P^qB_P)	 \cong \HH_{\mm}^i(S/J \otimes_{B} B_P/P^qB_P),	
		$
		we also get the surjection 
		$$
		\HH_{\mm}^i(S/J \otimes_{B} B_P/P^qB_P)	 \;\surjects\; \HH_{\mm}^i(S/J \otimes_{B} B_P/PB_P).
		$$
		By \autoref{thm_fib_full_mod}, we obtain that $S/J$ is fiber-full over $B$.
		We will now prove that each $\HL^i(S/J)$ is actually $B$-free, and not just $B$-flat.
		If we show that $\HL^i(S/J)$ is $B$-free, by the arbitrary base change property of fiber-full modules (see \autoref{thm_fib_full_mod}), we obtain that 
		$$
		\HL^i(R/I) \;\cong\;  \HL^i\left(S/J \otimes_B B/(t-1)\right) \;\cong\;  \HL^i(S/J) \otimes_B B/(t-1)
		$$
		is free over $A \cong B/(t-1)$.
		
		Under the bigrading of $S$ introduced above,   $\left[\HL^i(S/J)\right]_{(\mu,*)} = \bigoplus_{\nu \in \ZZ} \left[\HL^i(S/J)\right]_{(\mu,\nu)}$ is a finitely generated  $B$-module for all $\mu \in \ZZ$.
		Hence we obtain that $\HL^i(S/J)$ is a projective $B$-module.
		When $\HL^i(S/J)$ is not finitely generated as a $B$-module, a classical result of Bass \cite[Corollary 4.5]{BASS_PROJ} already implies that  it is $B$-free.

 		From the arbitrary base change property of fiber-full modules, we deduce that the long exact sequence in cohomology induced by $0 \rightarrow S/J(0,-1) \xrightarrow{\cdot t} S/J \rightarrow R/\iniTerm(I) \rightarrow 0$ splits into the following short exact sequences of bigraded $S$-modules
		$$
		0 \;\rightarrow\; \HL^i(S/J)(0,-1) \; \xrightarrow{\cdot t} \; \HL^i(S/J) \rightarrow\; \HL^i(R/\iniTerm(I))\; \rightarrow \; 0
		$$
		for $i \ge 0$.
		Fix $\mu \in \ZZ$ and $i \ge 0$. 
		Let $M := \big[\HL^i(S/J)\big]_{(\mu,*)}$ and $N := \big[\HL^i(R/\iniTerm(I))\big]_{(\mu,*)}$.
		We see $B=A[t]$ as a standard graded polynomial ring.
		Thus we have a short exact sequence $0 \rightarrow M(-1) \xrightarrow{\cdot t} M \rightarrow N \rightarrow 0$ of finitely generated graded $B$-modules.
		
		Let $R_0 = \kk[x_1,\ldots,x_r] \subset R$ and $\bbb = (\xx^{\beta_1}, \ldots,\xx^{\beta_b}) \subset R_0$, and denote also by $\mm$ the irrelevant ideal of $R_0$.
		Since $\HL^i(R/\iniTerm(I))\cong \HL^i(R_0/\bbb) \otimes_\kk A$, it follows that $N$ is a free $A$-module.
		Hence we obtain a free $A$-module $W \subset M$ which is isomorphic to $N$ and the equality of $A$-modules
		$
		M = W \oplus t\,M. 
		$
		By Nakayama's lemma,  $M$ is generated as a $B$-module by an $A$-basis of $W$.
		One can check that such generating set is actually a $B$-basis of $M$.
		Therefore, the proof of the theorem is complete.
	\end{proof}

  \section{Generic freeness for certain filtered algebras}
  \label{sect_gen_free_filtered}
  
  In this short section, we discuss the generic freeness of modules over certain (not necessarily commutative) filtered rings.
  The result proven here will play an important role in our study of local cohomology modules in a characteristic zero setting.
  We have the following theorem that follows from \autoref{cor_gen_free_grad}.
     
\begin{theorem}\label{thm: noncom gen freeness}
Let $D = \bigcup_{i \geq 0} D_i$  be a {\rm(}not necessarily commutative{\rm)} filtered algebra over a commutative ring $R = D_0$ 
such that the associated graded ring $\gr (D) \coloneqq \bigoplus_{i \geq 0} D_i/D_{i - 1}$ {\rm(}with $D_{-1} = 0${\rm)} is a finitely generated commutative $R$-algebra.  
Suppose that $R$ is a finitely generated algebra over a Noetherian domain $A$. 
Then, for any finitely generated left $D$-module $M$,  there exists an element $0 \neq a \in A$ such that $A_a \otimes_A M$ is a free $A_a$-module.
\end{theorem}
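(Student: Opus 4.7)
The plan is to reduce to the graded case (\autoref{cor_gen_free_grad}) by filtering $M$ and passing to the associated graded module. The strategy mirrors classical arguments relating $D$-modules to their associated graded modules.

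First, I would fix finitely many generators $m_1, \ldots, m_k$ of $M$ as a left $D$-module and equip $M$ with the induced filtration $M_i = \sum_{j=1}^k D_i m_j$ for $i \ge 0$ (and $M_{-1} = 0$). This is an exhaustive $D$-module filtration compatible with the filtration on $D$, so the associated graded module $\gr(M) = \bigoplus_{i \ge 0} M_i/M_{i-1}$ is a graded $\gr(D)$-module, and it is finitely generated (by the images of $m_1, \ldots, m_k$ in degree $0$) because $\gr(D)$ is Noetherian by hypothesis. Since $\gr(D)$ is a finitely generated commutative $R$-algebra and $R$ is a finitely generated $A$-algebra, $\gr(D)$ is itself a positively graded finitely generated commutative $A$-algebra with $[\gr(D)]_0 = R$ acting on degree $0$ in the usual way.

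Next, I would apply \autoref{cor_gen_free_grad} to the finitely generated graded $\gr(D)$-module $\gr(M)$ over the Noetherian domain $A$. This yields a nonzero $a \in A$ such that each graded piece $M_i/M_{i-1} \otimes_A A_a$ is a free $A_a$-module.

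Finally, I would upgrade this to freeness of $M \otimes_A A_a$ itself. For each $i \ge 0$ the short exact sequence
\[
0 \;\longrightarrow\; M_{i-1} \otimes_A A_a \;\longrightarrow\; M_i \otimes_A A_a \;\longrightarrow\; (M_i/M_{i-1}) \otimes_A A_a \;\longrightarrow\; 0
\]
splits, because the quotient is $A_a$-free hence projective. Choosing splittings and combining them inductively gives an $A_a$-module isomorphism $M_i \otimes_A A_a \cong \bigoplus_{j \le i} (M_j/M_{j-1}) \otimes_A A_a$. Since $M = \bigcup_i M_i$ and tensor product commutes with filtered colimits, taking the colimit produces
\[
M \otimes_A A_a \;\cong\; \bigoplus_{i \ge 0} (M_i/M_{i-1}) \otimes_A A_a,
\]
which is a direct sum of free $A_a$-modules and hence free.

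The only delicate point is making sure the splittings assemble coherently into a single isomorphism on the colimit; this is routine because the splittings on successive stages can be chosen compatibly (equivalently, one notes that a filtered module whose successive quotients are projective is isomorphic to the direct sum of its successive quotients). No part of the argument is genuinely hard once \autoref{cor_gen_free_grad} is in hand; the main conceptual step is recognizing that the hypothesis on $\gr(D)$ is exactly what is needed to import the commutative graded result into a noncommutative filtered setting.
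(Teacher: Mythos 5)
Your proposal is correct and follows essentially the same route as the paper: filter $M$ by $M_i = D_i\langle m_1,\ldots,m_k\rangle$, apply \autoref{cor_gen_free_grad} to the finitely generated graded $\gr(D)$-module $\gr(M)$, split the resulting short exact sequences after the (flat) localization at $a$, and assemble to conclude $M\otimes_A A_a$ is free. (Minor quibble: $\gr(M)$ is finitely generated because the images of $m_1,\ldots,m_k$ in degree $0$ generate it over $\gr(D)$, as you already note in the parenthetical---not because $\gr(D)$ is Noetherian.)
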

\begin{proof}
Fix a system of generators $f_1, \ldots, f_n$ of $M$ and let $M_i = D_i \langle f_1, \ldots, f_n \rangle$.
By  construction, $\gr_D(M) \coloneqq \bigoplus_{i = 0}^\infty M_i/M_{i -1}$ is 
generated over $\gr(D)$ by $f_1, \ldots, f_n \in M_0$.
Since  $\gr (D)$ is a finitely generated $A$-algebra, \autoref{cor_gen_free_grad} gives an element $0 \neq a \in A$ such that the graded pieces of $\gr_D (M) \otimes_{A} A_a$
are free $A_a$-modules. 
Thus, for any $i \ge 0$ there is a short exact sequence of $A_a$-modules
\[
0 \to A_a \otimes_{A} M_{i - 1} \to A_a \otimes_{A}  M_i \to A_a \otimes_{A}  M_{i}/M_{i - 1} \to 0,
\]
as $A_a \otimes_{A}  M_{i}/M_{i - 1}$ is free, the sequence splits. 
Moreover, $M_i = 0$ for all $i < 0$.  It follows by induction that all $A_a \otimes_{A} M_i$ and
$A_a \otimes_{A} M$ are $A_a$-free.  
\end{proof}
  
We will apply this theorem to the subring of the ring of differential operators generated by derivations as the necessary properties are provided by
\autoref{prop_derivation_ring}.
  For completeness, we recall the result of Artin--Small--Zhang \cite{ASZ} proving \emph{generic flatness} over certain \emph{strongly Noetherian algebras}.
  This generic flatness result covers more general non-commutative algebras, but it comes with the price of stronger conditions over the coefficient ring.
  
  \begin{definition}
		Let $D$ be a right Noetherian algebra over a commutative Noetherian ring $A$.
		We say that $D$ is \emph{strongly right Noetherian over $A$} if $D \otimes_{A} B$ is right Noetherian for any commutative Noetherian $A$-algebra $B$.
  \end{definition}

	In \autoref{thm_gen_prop_smooth_A}(ii), we show that the ring of differential operators is strongly right Noetherian whenever we have smoothness over a coefficient ring that is Noetherian and contains the field of rational numbers.
In the commutative case, algebras \emph{essentially} of finite type are strongly Noetherian \cite[Proposition~4.1]{ASZ}.

	\begin{theorem}[Artin--Small--Zhang \cite{ASZ}]
		Let $A$ be a domain of finite type over a field or an excellent Dedekind domain.
		Assume $D$ is a strongly right Noetherian algebra over $A$.
		Then, for any finitely generated right $D$-module $M$, there is an element $0 \neq a \in A$ such that $M \otimes_{A} A_a$ is a flat $A_a$-module.
	\end{theorem}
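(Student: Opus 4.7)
The plan is to proceed by Noetherian induction on finitely generated right $D$-modules, using that $D$ is right Noetherian so every submodule and quotient of $M$ is again finitely generated. A useful preliminary observation is that generic $A$-flatness is preserved by short exact sequences: if $0 \to M' \to M \to M'' \to 0$ is exact and there exist $0 \neq a', a'' \in A$ such that $M' \otimes_A A_{a'}$ and $M'' \otimes_A A_{a''}$ are flat, then $M \otimes_A A_{a'a''}$ is flat. Consequently it suffices to handle successive subquotients in any good filtration of $M$.

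Using this, I would first reduce to the case where $M$ is $A$-torsion free. Treating $A$ as central in $D$ (as is standard when $D$ is called an $A$-algebra), the $A$-torsion submodule $T(M) \subseteq M$ is a $D$-submodule; by Noetherian-ness of $D$ it is finitely generated, and a common multiple of nonzero annihilators of its finitely many generators yields a single $0 \neq a \in A$ killing $T(M)$. Thus $T(M)$ is trivially generically flat, and we may replace $M$ by the torsion-free quotient $M/T(M)$.

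For torsion-free $M$, I would pass to the generic fiber $M_K \coloneqq M \otimes_A K$ over $K = \mathrm{Frac}(A)$. Strong right Noetherian-ness guarantees that $D_K \coloneqq D \otimes_A K$ is right Noetherian, so $M_K$ admits a finite presentation $D_K^{\,p} \to D_K^{\,q} \to M_K \to 0$. Clearing denominators of the entries of a matrix representing the first arrow lets one spread the presentation out over some $A_a$ to a presentation of $M \otimes_A A_a$ as a right $D \otimes_A A_a$-module. The hypotheses on $A$ now enter to conclude flatness: in the excellent Dedekind case, torsion-freeness is already equivalent to flatness over $A_a$; in the finite-type-over-a-field case, one uses Noether normalization to reduce to $A$ being a polynomial ring and induct on $\dim A$, invoking strong Noetherian-ness at each step to ensure that the further base changes $D \otimes_A A_a$, $D \otimes_A K$, and $D \otimes_A (A/\mathfrak{p})$ remain Noetherian so that the induction can be carried out.

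The main obstacle is that classical commutative devices such as associated primes and primary decomposition are not directly available for modules over the noncommutative ring $D$, so Noetherian-ness must be propagated through every base change purely by means of the strong Noetherian hypothesis. This is exactly why the strengthened assumption is indispensable, in contrast with Grothendieck's classical commutative lemma (\autoref{cor: gen free}), where the ordinary Noetherian hypothesis on $R$ already suffices and no strong analogue is required.
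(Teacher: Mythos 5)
Your torsion reduction is correct, and in fact it already finishes the excellent-Dedekind half: after inverting a suitable $a$, the quotient $M/T(M)$ is $A_a$-torsion-free, and over a Dedekind domain (or its localization) torsion-free implies flat. Note that this half of the statement uses only ordinary right Noetherianity of $D$, so the entire burden of the strong Noetherian hypothesis must be carried by the finite-type case --- and there your sketch is not a proof.

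Two concrete gaps. Clearing denominators in a finite presentation of $M_K$ over $D_K$ yields a cokernel $M'$ of a map of free $D \otimes_A A_a$-modules with $M' \otimes_{A_a} K \cong M_K$, but this is not on its own a presentation of $M_a := M \otimes_A A_a$; one still has to build a comparison map $M' \to M_a$ and show it becomes an isomorphism after a further localization, which you do not do. More fundamentally, even a genuine finite presentation of $M_a$ over $D \otimes_A A_a$ carries no flatness information whatsoever --- non-flat finitely presented modules are ubiquitous --- so the key content of the theorem is untouched by this step. The flatness conclusion in the ASZ theorem is extracted from the Jacobson (Nullstellensatz) property of finite-type $\kk$-algebras combined with a semicontinuity analysis of fiberwise invariants of $M \otimes_A \kappa(\mathfrak{m})$ as a module over $D \otimes_A \kappa(\mathfrak{m})$, and strong Noetherianity is exactly what keeps those fibers finitely generated so the analysis can be run; none of that mechanism appears in your sketch. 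Your ``Noether normalization and induct on $\dim A$'' step names neither the inductive quantity nor the inductive step, and gives no way of assembling information across closed subsets of $\Spec A$. (For reference, the paper itself does not prove this theorem --- it is cited from \cite{ASZ} --- so the comparison must be with the actual ASZ argument, which is considerably more delicate than your plan suggests.)
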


	\section{Differential operators and smooth algebras over a coefficient ring}
	\label{sect_diff_ops_smooth}
	
	Here we study several properties of differential operators in smooth algebras over a coefficient ring that is Noetherian and contains the field of rational numbers. 
	Our main result is presented in \autoref{thm_gen_prop_smooth_A}.
	A general and complete reference on the topic of differential operators is \cite[\S 16]{EGAIV_IV}.
	Throughout this section the following setup is in place.
	
	\begin{setup}
		Let $R$ be a commutative ring and $A$ be a subring.
	\end{setup}
	
	For two $R$-modules $M$ and $N$, we regard $\Hom_A(M, N)$ as an $(R\otimes_A R)$-module, by setting 
	$$
	\left((r \otimes_A s) \delta\right)(w) \,=\, r \delta(sw) \quad \text{ for all } \delta \in \Hom_A(M, N), \; w \in M,\; r,s \in R. 
	$$ 
	We use the bracket notation $[\delta,r](w) \coloneqq \delta(rw)-r\delta(w)$ for $\delta \in \Hom_A(M, N)$, $r \in R$ and $w \in M$.
	Unless specified otherwise, whenever we consider an $(R \otimes_A R)$-module as an $R$-module, we do 
	so by letting $R$ act via the left factor of $R \otimes_A R$. 
	Differential operators are defined inductively as follows.
	
	\begin{definition}
		\label{def_diff_ops}
		Let $M, N$ be two $R$-modules.
		The \textit{$m$-th order $A$-linear differential operators}, denoted as
		$\,
		\Diff_{R/A}^m(M, N) \subseteq \Hom_A(M, N)$,
		form an $(R\otimes_A R)$-module that is defined inductively~by
		\begin{enumerate}[\rm (i)]
			\item $\Diff_{R/A}^{0}(M,N) \coloneqq \Hom_R(M,N)$.
			\item $\Diff_{R/A}^{m}(M, N) \coloneqq
			\big\lbrace \delta \in \Hom_A(M,N) \,\mid\, [\delta, r] \in \Diff_{R/A}^{m-1}(M, N) 
			\,\text{ for all }\, r \in R \big\rbrace$.
		\end{enumerate}
		The set of all \textit{$A$-linear differential operators from $M$ to $N$} 
		is the $(R \otimes_A R)$-module
		$
		\Diff_{R/A}(M, N) \coloneqq \bigcup_{m=0}^\infty \Diff_{R/A}^m(M,N).
		$
		To simplify notation, one sets $D_{R/A}^m \coloneqq \Diff_{R/A}^m(R, R)$ and $D_{R/A} \coloneqq \Diff_{R/A}(R, R)$.
		One says that $D_{R/A}$ is the \emph{ring of $A$-linear differential operators of $R$} (which is not necessarily a commutative ring).
		Following the notation of \cite[Chapter 15]{McCONNELL_ROBSON}, we denote by $\Delta(R/A) \subset D_{R/A}$ the $A$-subalgebra generated by $D_{R/A}^1 = R \oplus \Der_{R/A}$, and we call it the \emph{derivation ring of $R$ over $A$}.
	\end{definition}

	\begin{remark}
		\label{rem_relative_Weyl_alg}
		Let $A$ be a ring containing the field $\QQ$ of rational numbers and $R = A[x_1,\ldots,x_r]$ be a polynomial ring.
		Then $D_{R/A}$ coincides with the \emph{relative Weyl algebra}
		$
		A[x_1,\ldots,x_r]\langle \partial_1, \ldots, \partial_r \rangle,
		$
		which is a quotient of the free $A$-algebra generated by $x_1,\ldots,x_r,\partial_1,\ldots,\partial_r$ modulo the two-sided ideal generated by the relations $x_ix_j = x_jx_i$, $\partial_i\partial_j = \partial_j\partial_i$ and $\partial_ix_j = x_j\partial_i + \delta_{i,j}$; here $\delta_{i,j}$ denotes Kronecker's symbol. 
		See, e.g., \cite[Th\'eor\`eme 16.11.2]{EGAIV_IV}, \cite[Example 4]{OBERST_NOETH_OPS}.
	\end{remark}

	To describe differential operators, one uses the module of principal parts.
	Consider the multiplication map 
	$
	\mu_{R/A} \colon R \otimes_A R \rightarrow R, \; 	r \otimes_A s \mapsto rs.
	$
	The kernel of this map is the diagonal ideal $\Delta_{R/A} \coloneqq \Ker(\mu_{R/A})\subset R \otimes_A R$. 
	\begin{definition}
		For an $R$-module $M$, the {\em module of $m$-th principal parts} is defined as	
		$
		P_{R/A}^m(M) \coloneqq \frac{R \otimes_A M}{	\Delta_{R/A}^{m+1}  \left(R \otimes_A M\right)}.
		$
		This is a module over $R \otimes_A R$ and thus over $R$.
		For simplicity of notation, we set $ P_{R/A}^m \coloneqq P_{R/A}^m(R)$.
	\end{definition}

	\begin{remark}
		\label{rem_finite_gen_princ}
		If $A$ is a Noetherian and $R$ is essentially of finite type over $A$, then $P_{R/A}^m$ is a finitely generated $R$-module for all $m \ge 0$ (see, e.g., \cite[Remark 3.3]{NOETH_OPS}).
	\end{remark}

	By an abuse of notation, we also denote by $\mu_{R/A}$ the natural multiplication map $P_{R/A}^m \rightarrow R$.
	For any $R$-module $M$, we consider the universal map 
	$\, d_{R/A}^m \colon M \rightarrow P_{R/A}^m(M), \,\, w  \mapsto \overline{1 \otimes_A w} $.
	The following result is a fundamental characterization of the modules of differential operators.
	
	\begin{proposition}[{\cite[Proposition 16.8.4]{EGAIV_IV}, \cite[Theorem 2.2.6]{AFFINE_HOPF_I}}]
		\label{prop_represen_diff_opp}
		Let $M$ and $N$ be $R$-modules and let $m\ge 0$.
		Then the following map is an isomorphism of $R$-modules:
		$$
			{\big(d_{R/A}^m\big)}^*\, \colon \, \Hom_R\big(P_{R/A}^m(M), N\big) 
			\,\,\xrightarrow{\cong} \,\,\Diff_{R/A}^m(M, N), \quad
			\psi   \,\,\mapsto \, \psi \circ d_{R/A}^m.
			$$
	\end{proposition}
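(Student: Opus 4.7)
The plan is to prove the standard adjointness between differential operators and principal parts, following the strategy in \cite[\S 16]{EGAIV_IV}. The map is given, so we must (a) verify well-definedness, (b) establish injectivity, and (c) establish surjectivity.

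First I would check well-definedness by induction on $m$. For $m=0$, we have $\Princ^{0}(M) = M$ with $d_{R/A}^{0} = \mathrm{id}$, and $\Diff_{R/A}^{0}(M,N) = \Hom_R(M,N)$, so there is nothing to prove. For the inductive step, given $\psi \in \Hom_R(\Princ^{m}(M),N)$, set $\delta = \psi \circ d_{R/A}^{m}$. For $r \in R$ and $w \in M$, a direct computation gives $[\delta,r](w) = \psi\bigl(r \cdot \overline{1\otimes_A w} - \overline{1\otimes_A rw}\bigr)$; the element inside lies in $\Diag \cdot (R\otimes_A M)$, so $[\delta,r]$ factors through $\Princ^{m-1}(M)$ via the canonical surjection $\Princ^{m}(M) \twoheadrightarrow \Princ^{m-1}(M)$, which shows $[\delta,r] \in \Diff_{R/A}^{m-1}(M,N)$ by induction.

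For injectivity I would observe that, because the $R$-module structure on $\Princ^{m}(M)$ is taken via the left factor, every element $\overline{r\otimes_A w}$ equals $r \cdot d_{R/A}^{m}(w)$, so the image of $d_{R/A}^{m}$ generates $\Princ^{m}(M)$ as an $R$-module. Hence $\psi$ is determined by its restriction to this image.

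For surjectivity, given $\delta \in \Diff_{R/A}^{m}(M,N)$, define the $A$-linear map $\tilde\psi \colon R\otimes_A M \to N$, $r\otimes_A w \mapsto r\,\delta(w)$. This is $R$-linear for the left $R$-action, so it suffices to show it vanishes on $\Diag^{m+1}(R\otimes_A M)$, as then it descends to the required $\psi \in \Hom_R(\Princ^{m}(M),N)$ with $\psi \circ d_{R/A}^{m} = \delta$. I would prove the vanishing statement by induction on $m$: the key identity is that for $r \in R$, the element $\tau_r := r\otimes_A 1 - 1\otimes_A r \in \Diag$ satisfies $\tilde\psi\bigl(\tau_r \cdot (s\otimes_A w)\bigr) = -s\,[\delta,r](w)$, and every element of $\Diag$ is an $R$-linear combination of the $\tau_r$. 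Thus for $\xi = \tau_{r_1}\cdots \tau_{r_{m+1}} \cdot (s\otimes_A w)$, iterated application of this identity expresses $\tilde\psi(\xi)$ in terms of the map associated to an iterated commutator $[\cdots[[\delta,r_1],r_2]\cdots,r_{m+1}] \in \Diff_{R/A}^{-1}(M,N) = 0$; the inductive hypothesis applied to $[\delta,r_1] \in \Diff_{R/A}^{m-1}(M,N)$ closes the argument.

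The main obstacle is the last step: correctly organizing the commutator expansion so that one can apply the induction hypothesis uniformly. In particular, one must use that $\Diag^{m+1}(R\otimes_A M)$ is spanned, as an $R$-module via the left factor, by products $\tau_{r_1}\cdots\tau_{r_{m+1}} \cdot (1\otimes_A w)$, which follows from the fact that the $\tau_r$ generate $\Diag$ and the left action absorbs the $R$-coefficients. Once this spanning statement is in place, the commutator identity reduces the problem to operators of order $m-1$ acting on $\Diag^{m}(R \otimes_A M)$, completing the induction.
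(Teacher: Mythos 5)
The paper does not give its own proof of this proposition; it is stated with only a citation to \cite[Proposition 16.8.4]{EGAIV_IV} and \cite[Theorem 2.2.6]{AFFINE_HOPF_I}. Your argument is a direct proof of the cited result, and it is essentially correct and standard, matching the EGA-style treatment: well-definedness by induction on $m$ using the commutator $[\delta,r]$, injectivity because $d_{R/A}^m(M)$ generates $P^m_{R/A}(M)$ over $R$, and surjectivity by descending $\tilde\psi$ through the vanishing on $\Delta_{R/A}^{m+1}(R\otimes_A M)$ via iterated commutators with $\tau_r = r\otimes_A 1 - 1\otimes_A r$. Two small remarks. First, there is a sign slip in the well-definedness step: with the paper's convention $[\delta,r](w)=\delta(rw)-r\delta(w)$, the computation yields $[\delta,r](w)=\psi\bigl(\overline{1\otimes_A rw}-\overline{r\otimes_A w}\bigr)$, the negative of what you wrote; this has no effect on the argument. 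Second, the phrase ``$[\delta,r]$ factors through $P_{R/A}^{m-1}(M)$ via the canonical surjection $P_{R/A}^m(M)\twoheadrightarrow P_{R/A}^{m-1}(M)$'' is not quite the right mechanism. The correct factorization is $[\delta,r]=-\psi\circ\overline{\tau_r}\circ d_{R/A}^{m-1}$, where $\overline{\tau_r}\colon P_{R/A}^{m-1}(M)\to P_{R/A}^m(M)$ is the $R$-linear map induced by multiplication by $\tau_r$ (well-defined because $\tau_r\,\Delta_{R/A}^m\subset\Delta_{R/A}^{m+1}$); this map goes in the opposite direction from the canonical surjection. With that correction the inductive appeal to the level-$(m-1)$ bijection is clean. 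Finally, you should also record the (easy) check that the bijection is $R$-linear for the left $R$-module structures, since the statement asserts an isomorphism of $R$-modules.
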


		We now point out some of the advantages that working with the derivation ring $\Delta(R/A)$ provides.
	In this section, we consider $\Delta(R/A)$ as a filtered ring in terms of the order of the differential operators. 
	That is, we consider the filtration where $F_m(\Delta(R/A))$ is the $R$-module generated by all the products of a most $m$ derivations in $\Der_{R/A}$.
	The corresponding associated graded ring is given by 
	$$
	\gr(\Delta(R/A)) \coloneqq \bigoplus_{m=0}^\infty\, \frac{F_m(\Delta(R/A))}{F_{m-1}(\Delta(R/A))}.
	$$
	\begin{lemma}
		\label{prop_derivation_ring}
		The following statements hold: 
		\begin{enumerate}[\rm (i)]
			\item $\gr(\Delta(R/A))$ is commutative and there is a canonical surjection $\Sym_R(\Der_{R/A}) \surjects \gr(\Delta(R/A))$.
			\item If $A$ is Noetherian and $R$ is a essentially of finite type over $A$, then $\gr(\Delta(R/A))$ and $\Delta(R/A)$ are Noetherian rings.
		\end{enumerate}
	\end{lemma}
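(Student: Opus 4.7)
For part (i), I would invoke the classical commutator computation. For derivations $\partial,\partial'\in \Der_{R/A}$, the difference $[\partial,\partial']=\partial\partial'-\partial'\partial$ is itself an $A$-linear derivation (the Leibniz rules cancel), hence lies in $F_1(\Delta(R/A))$. I would promote this to the general assertion that for $\delta\in F_m(\Delta(R/A))$ and $\delta'\in F_n(\Delta(R/A))$ one has $[\delta,\delta']\in F_{m+n-1}(\Delta(R/A))$, by induction on $m+n$ using the identities $[\delta\delta',\delta'']=\delta[\delta',\delta'']+[\delta,\delta'']\delta'$ and the base case above. This is exactly the statement that symbols commute in $\gr(\Delta(R/A))$. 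Since $\Delta(R/A)$ is generated as an $A$-algebra by $R\subset F_0$ together with $\Der_{R/A}\subset F_1$, its associated graded is generated over $F_0=R$ by the degree-one symbols of derivations, and since those commute, the universal property of the symmetric algebra yields the canonical surjection $\Sym_R(\Der_{R/A})\twoheadrightarrow \gr(\Delta(R/A))$.

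For part (ii), I would first observe $\Der_{R/A}\cong \Hom_R(\Omega_{R/A},R)$. Since $A$ is Noetherian and $R$ is essentially of finite type over $A$, the ring $R$ is itself Noetherian and $\Omega_{R/A}$ is a finitely generated $R$-module (differentials are compatible with localization from the finitely generated case). Hence $\Der_{R/A}$, as a submodule of a $\Hom$ into a Noetherian module from a finitely presented module, is finitely generated over $R$. Consequently $\Sym_R(\Der_{R/A})$ is a finitely generated commutative $R$-algebra and therefore Noetherian, and the surjection from (i) makes $\gr(\Delta(R/A))$ Noetherian as well.

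To deduce that $\Delta(R/A)$ itself is Noetherian, I would apply the standard filtered-to-graded lifting argument. Given a left ideal $J\subseteq \Delta(R/A)$, form
\[
\gr(J)\;:=\;\bigoplus_{m\ge 0}\frac{(J\cap F_m)+F_{m-1}}{F_{m-1}}\;\subseteq\; \gr(\Delta(R/A)).
\]
By Noetherianity of the associated graded, $\gr(J)$ admits finitely many homogeneous generators $\xi_1,\ldots,\xi_k$ of orders $m_1,\ldots,m_k$; choose lifts $f_j\in J\cap F_{m_j}$ whose symbols are $\xi_j$. A straightforward induction on the order of an element $f\in J$ — at each step subtracting a left combination of the $f_j$ matching the top symbol of $f$, thereby strictly decreasing the order — shows that $f_1,\ldots,f_k$ generate $J$. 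The right-ideal argument is identical. Hence $\Delta(R/A)$ is Noetherian.

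The only step that requires any real care is the commutator identity establishing commutativity of $\gr(\Delta(R/A))$; everything else proceeds from the universal property of $\Sym_R$, from standard facts about $\Omega_{R/A}$ in the essentially-finite-type Noetherian setting, and from the classical lifting principle for filtered rings with Noetherian associated graded.
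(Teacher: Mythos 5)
Your proposal is correct and follows essentially the same route as the paper: commutator of derivations is a derivation, hence $\gr(\Delta(R/A))$ is commutative and a quotient of $\Sym_R(\Der_{R/A})$; then $\Der_{R/A}=\Hom_R(\Omega_{R/A},R)$ is finitely generated because $\Omega_{R/A}$ is, and Noetherianity of $\gr$ transfers to $\Delta(R/A)$ by the filtered-to-graded lifting. The only difference is that the paper delegates the commutator and lifting arguments to cited results in McConnell--Robson (Proposition~15.1.19 and Theorem~15.1.20), whereas you spell them out explicitly.
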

	\begin{proof}
		Part (i) follows from \cite[Proposition 15.1.19]{McCONNELL_ROBSON}.
		Notice that, for all $\delta_1, \delta_2 \in \Der_{R/A}$, $r,s \in R$, we have 
		$
		[\delta_1, \delta_2](rs) = r[\delta_1, \delta_2](s) + s[\delta_1, \delta_2](r).
		$
		Hence $[\delta_1, \delta_2] = \delta_1\delta_2 - \delta_2\delta_1 \in \Der_{R/A}$.

		Since $R$ is essentially of finite type over $A$, $\Omega_{R/A}$ is a finitely generated $R$-module (see \autoref{rem_finite_gen_princ}), and so $\Der_{R/A} = \Hom_R(\Omega_{R/A}, R)$ also is.
		Therefore, by part (i), we obtain that $\gr(\Delta(R/A))$ and $\Delta(R/A)$ are Noetherian rings (see \cite[Theorem 15.1.20]{McCONNELL_ROBSON}).
		So the proof of part (ii) is complete.
	\end{proof}

	We say that $A \rightarrow R$ is a \emph{smooth} ring map if $R$ is  \emph{formally smooth} and of finite presentation over $A$.
	Similarly, the ring map $A \rightarrow R$ is \emph{\'etale} if $R$ is  \emph{formally \'etale} and of finite presentation over $A$.
	For more details on these notions, see
	\cite[\href{https://stacks.math.columbia.edu/tag/00TH}{Tag 00TH}]{stacks-project} and \cite[\href{https://stacks.math.columbia.edu/tag/00UP}{Tag 00UP}]{stacks-project}.
	We cover a result of M\'asson \cite{MASSON} regarding the behavior of differential operators under (formally) \'etale ring maps (for completeness, we include a short account working over an arbitrary coefficient ring $A$).
	
	We first point out the following base change property of differential operators under a smooth setting.
	
	\begin{lemma}
		\label{lem_base_change_diff}
		Suppose that $A \rightarrow R$ is formally smooth and  $P_{R/A}^m$ is a finitely generated $R$-module for all $m \ge 0$ .
		Then, for any $A$-algebra $B$, we have a base change isomorphism $D_{R/A} \otimes_{A} B \xrightarrow{\cong} D_{(R\otimes_{A} B)/B}$. 
	\end{lemma}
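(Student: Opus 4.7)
The plan is to use the identification $D_{R/A}^m \cong \Hom_R(P_{R/A}^m, R)$ from \autoref{prop_represen_diff_opp} at each filtered piece and reduce the whole question to a base change statement for $\Hom$. I would proceed in four steps and then pass to the union in $m$.

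First, a purely formal observation: the formation of the module of principal parts commutes with base change along $A \to B$. The diagonal ideal $\Delta_{(R \otimes_A B)/B}$ inside $(R \otimes_A B) \otimes_B (R \otimes_A B) \cong R \otimes_A R \otimes_A B$ is the extension of $\Delta_{R/A}$, so its $(m+1)$-st power is $\Delta_{R/A}^{m+1} \otimes_A B$, and passing to the quotient yields a canonical isomorphism
$$
P_{R/A}^m \otimes_A B \;\xrightarrow{\cong}\; P_{(R \otimes_A B)/B}^m.
$$

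Second, I would show that formal smoothness of $A \to R$ together with the finite generation hypothesis implies that each $P_{R/A}^m$ is a finitely generated projective $R$-module. For $m=1$, formal smoothness yields a split short exact sequence
$$
0 \to \Omega_{R/A} \to P_{R/A}^1 \to R \to 0,
$$
so that $\Omega_{R/A}$ is a direct summand of $P_{R/A}^1$ and hence finitely generated projective. For $m \ge 2$, I would use the $\Delta_{R/A}$-adic filtration on $P_{R/A}^m$, whose successive quotients $\Delta_{R/A}^k / \Delta_{R/A}^{k+1}$ are isomorphic to $\Sym_R^k(\Omega_{R/A})$ under smoothness (see \cite[\S 16]{EGAIV_IV}). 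Each such symmetric power is finitely generated projective, and so the successive extensions split, making $P_{R/A}^m$ finitely generated projective.

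Third, I would invoke the standard base change formula for $\Hom$ with a finitely generated projective first argument: for any $R$-algebra $S$ and any $R$-module $N$,
$$
\Hom_R(M, N) \otimes_R S \;\xrightarrow{\cong}\; \Hom_S(M \otimes_R S,\, N \otimes_R S)
$$
whenever $M$ is finitely generated projective over $R$. Taking $M = P_{R/A}^m$, $N = R$ and $S = R \otimes_A B$, and combining with the base change isomorphism for principal parts from step one, I obtain
$$
D_{R/A}^m \otimes_A B \;\cong\; \Hom_{R \otimes_A B}\!\bigl(P_{(R \otimes_A B)/B}^m,\, R \otimes_A B\bigr) \;\cong\; D_{(R \otimes_A B)/B}^m.
$$
These isomorphisms are compatible with the natural inclusions $D_{R/A}^m \hookrightarrow D_{R/A}^{m+1}$ (arising from the surjections $P_{R/A}^{m+1} \twoheadrightarrow P_{R/A}^m$), so passing to the union over $m$ and using that tensor product commutes with filtered colimits yields the desired isomorphism $D_{R/A} \otimes_A B \xrightarrow{\cong} D_{(R \otimes_A B)/B}$.

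The main obstacle is the second step: establishing projectivity of the principal parts modules. This is exactly where formal smoothness is essential and where one must combine the standard local structure of smooth morphisms with the finite generation hypothesis; the remaining steps are formal manipulations of the representability isomorphism and of base change for $\Hom$.
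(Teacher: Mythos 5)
Your proposal is correct and follows essentially the same route as the paper: it establishes that $P_{R/A}^m$ is finitely generated projective (the paper simply cites \cite[16.10.1--16.10.2]{EGAIV_IV} where you instead spell out the argument via the $\Delta$-adic filtration and symmetric powers of $\Omega_{R/A}$), proves that $P_{R/A}^m$ commutes with base change, and then invokes the representability isomorphism of \autoref{prop_represen_diff_opp} together with $\Hom$ base change for finitely generated projectives. The only remark worth making is that your step~1 wording ("so its $(m+1)$-st power is $\Delta_{R/A}^{m+1}\otimes_A B$") elides the distinction between the extended ideal and the tensor product, but since one is only taking the quotient the conclusion $P_{R/A}^m\otimes_A B\cong P_{(R\otimes_A B)/B}^m$ still holds by right-exactness; the paper handles this a bit more explicitly via short exact sequences and flatness of $R$ over $A$.
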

	\begin{proof}
		Due to \cite[Proposition 16.10.2]{EGAIV_IV} and \cite[D\'efinition 16.10.1]{EGAIV_IV}, we have that $\Princ^m$ is a projective $R$-module for all $m\ge0$.
		Furthermore, $R$ is $A$-flat by \cite[Th\'eor\`eme 19.7.1]{EGAIV_I}.
		Let $R_B = R \otimes_{A} B$.
		The short exact sequence $0 \rightarrow \Delta_{R/A} \rightarrow R\otimes_{A} R \xrightarrow{\mu_{R/A}}R\rightarrow 0$ induces a short exact sequence $0 \rightarrow \Delta_{R/A} \otimes_{A} B\rightarrow R_B\otimes_{B} R_B \xrightarrow{\mu_{R_B/B}}R_B\rightarrow 0$, and so we obtain that $\Delta_{R/A} \otimes_{A} B = \Delta_{R_B/B}$.
		Similarly, by considering the short exact sequence $0 \rightarrow \Delta_{R/A}^{m+1} \rightarrow R\otimes_{A} R \rightarrow P_{R/A}^m \rightarrow 0$ and taking the tensor product $-\otimes_{A} B$, we obtain the isomorphism $P_{R/A}^m \otimes_{A} B \cong P_{R_B/B}^m$.
		Thus \autoref{prop_represen_diff_opp} yields the natural isomorphisms
		$$
		D_{R/A}^m \otimes_{A} B \;\cong\; \Hom_R(P_{R/A}^m, R) \otimes_{A} B \;\cong\; \Hom_{R_B}(P_{R/A}^m \otimes_{A} B, R_B) \;\cong\; D_{R_B/B}^m.
		$$
		So the result follows.
	\end{proof}
	
	\begin{remark}
		\label{rem_existence_dhat_map}
		Let $\varphi\colon R \rightarrow T$ be a formally \'etale ring map of $A$-algebras.
		The kernel of the natural multiplication map $T \otimes_R P_{R/A}^m \rightarrow T \otimes_R R  \cong T$ is a nilpotent ideal. 
		Therefore, from the definition of formally \'etale algebras, we obtain a unique $A$-algebra homomorphism $\widehat{d}_{T/A}^m \colon T \rightarrow  T \otimes_R P_{R/A}^m$ that makes the following diagram commute
		\begin{center}
			\begin{tikzpicture}[baseline=(current  bounding  box.center)]
				\matrix (m) [matrix of math nodes,row sep=5em,column sep=7em,minimum width=2em, text height=1.3ex, text depth=0.25ex]
				{
					T & T \\
					R & T \otimes_R P_{R/A}^m. \\
				};
				\path[-stealth]
				(m-2-1) edge node [left] {$\varphi$} (m-1-1)
				(m-1-1) edge node [above] {$1$} (m-1-2)
				(m-2-1) edge node [above] {$1 \otimes_R d_{R/A}^m$} (m-2-2)
				(m-2-2)  edge  node [right] {$T \otimes_R \mu_{R/A}$} (m-1-2)
				;	
				\draw[->,dashed] (m-1-1)--(m-2-2) node [midway,above] {$\widehat{d}_{T/A}^m$};	
			\end{tikzpicture}	
		\end{center}
		From this commutative diagram we get that $\Delta_{T/A}^{m+1} \,\cdot\, \widehat{d}_{T/A}^m = 0$. 
		Indeed,  $\Ker(T\otimes_R \mu_{R/A})^{m+1} = 0$ and $t\,\widehat{d}_{T/A}^m(1) - \widehat{d}_{T/A}^m(t) \in \Ker(T\otimes_R \mu_{R/A})$ for all $t \in T$.
		This implies that $\widehat{d}_{T/A}^m$ is a differential operator of order at most $m$ (see \cite[Proposition 2.2.3]{AFFINE_HOPF_I}). 
	\end{remark}

	\begin{proposition}
		\label{prop_isom_princ_part_etale}
		Let $\varphi \colon R \rightarrow T$ be a formally \'etale ring map of $A$-algebras.
		Then there is a unique $T$-module isomorphism $\phi^m \colon T \otimes_R P_{R/A}^m \xrightarrow{\,\cong\,} P_{T/A}^m$ making the following diagram commute
		\begin{center}
			\begin{tikzpicture}[baseline=(current  bounding  box.center)]
				\matrix (m) [matrix of math nodes,row sep=5em,column sep=7em,minimum width=2em, text height=1.3ex, text depth=0.25ex]
				{
					T & T \otimes_R P_{R/A}^m \\
				        & P_{T/A}^m. \\
				};
				\path[-stealth]
				(m-1-1) edge node [above] {$d_{T/A}^m$} (m-2-2)
				(m-1-1) edge node [above] {$\widehat{d}_{T/A}^m$} (m-1-2)
				;			
				\draw[->,dashed] (m-1-2)--(m-2-2) node [midway,right] {$\phi^m$};	
			\end{tikzpicture}	
		\end{center}
	\end{proposition}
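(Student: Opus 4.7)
The plan is to define $\phi^m$ by extension of scalars from the natural map on principal parts, deduce the commutativity of the diagram from the unique lifting property of formally étale morphisms, and establish that $\phi^m$ is an isomorphism by constructing an explicit inverse via the universal property in \autoref{prop_represen_diff_opp}.

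First, I would construct $\phi^m$ as follows. The ring map $\varphi \otimes_A \varphi \colon R \otimes_A R \to T \otimes_A T$ sends $\Delta_{R/A}$ into $\Delta_{T/A}$, and therefore $\Delta_{R/A}^{m+1}$ into $\Delta_{T/A}^{m+1}$; it thus descends to an $R$-linear $A$-algebra map $\widetilde{\varphi}^m \colon P_{R/A}^m \to P_{T/A}^m$ (with respect to the left-factor structures). Extending scalars along $\varphi$ gives the desired $T$-linear $A$-algebra homomorphism $\phi^m \colon T \otimes_R P_{R/A}^m \to P_{T/A}^m$, characterized by $\phi^m(t \otimes \overline{1 \otimes_A r}) = t \cdot \overline{1 \otimes_A \varphi(r)}$. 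In particular, $\phi^m \circ (1 \otimes_R d_{R/A}^m) = d_{T/A}^m \circ \varphi$.

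For the commutativity of the diagram, I would observe that both $d_{T/A}^m$ and $\phi^m \circ \widehat{d}_{T/A}^m$ are $A$-algebra homomorphisms $T \to P_{T/A}^m$: the first because $d_{T/A}^m$ agrees with the right-factor inclusion $T \hookrightarrow T \otimes_A T$ modulo $\Delta_{T/A}^{m+1}$; the second because both $\widehat{d}_{T/A}^m$ (by \autoref{rem_existence_dhat_map}) and $\phi^m$ are $A$-algebra maps. Moreover, both are sections of $\mu_{T/A} \colon P_{T/A}^m \to T$: for $\phi^m \circ \widehat{d}_{T/A}^m$ this follows from the identity $\mu_{T/A} \circ \phi^m = T \otimes_R \mu_{R/A}$ (easily verified on the generators $t \otimes \overline{1 \otimes_A r}$) together with the defining property $(T \otimes_R \mu_{R/A}) \circ \widehat{d}_{T/A}^m = 1_T$. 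Finally, precomposing each with $\varphi$ yields the same $R$-algebra map $r \mapsto \overline{1 \otimes_A \varphi(r)}$. Because $\varphi$ is formally étale and $\Ker(\mu_{T/A})$ is nilpotent, such an $A$-algebra lift is unique, forcing $\phi^m \circ \widehat{d}_{T/A}^m = d_{T/A}^m$.

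To see that $\phi^m$ is an isomorphism, I would apply \autoref{prop_represen_diff_opp} to the differential operator $\widehat{d}_{T/A}^m$ of order at most $m$ (cf.~\autoref{rem_existence_dhat_map}), obtaining a $T$-linear map $\psi \colon P_{T/A}^m \to T \otimes_R P_{R/A}^m$ with $\psi \circ d_{T/A}^m = \widehat{d}_{T/A}^m$. Then $(\phi^m \circ \psi) \circ d_{T/A}^m = \phi^m \circ \widehat{d}_{T/A}^m = d_{T/A}^m$, so by the uniqueness in \autoref{prop_represen_diff_opp} we obtain $\phi^m \circ \psi = \mathrm{id}$. Conversely, $\psi \circ \phi^m$ agrees with the identity on the image of $\widehat{d}_{T/A}^m$; since $T \otimes_R P_{R/A}^m$ is generated as a $T$-module by the elements $1 \otimes \overline{1 \otimes_A r} = \widehat{d}_{T/A}^m(\varphi(r))$ with $r \in R$, this forces $\psi \circ \phi^m = \mathrm{id}$. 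The same $T$-generation argument also yields the uniqueness of $\phi^m$ among $T$-linear maps making the diagram commute. The main obstacle is the formal-étale lifting step, which requires careful bookkeeping of the two $R$-algebra structures on $P_{T/A}^m$ (coming from the two tensor factors) to ensure both candidate lifts sit in the correct setup for uniqueness to apply.
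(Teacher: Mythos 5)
Your proposal is correct and follows essentially the same route as the paper's proof: you construct $\phi^m$ as the scalar extension of the natural map $P_{R/A}^m \to P_{T/A}^m$, deduce $\phi^m\circ\widehat{d}_{T/A}^m = d_{T/A}^m$ from the uniqueness in the formal-\'etale lifting property, and invert $\phi^m$ by producing $\psi^m$ from the representability of $\widehat{d}_{T/A}^m$ via \autoref{prop_represen_diff_opp}, using that both principal-parts modules are generated by the images of the universal maps. The only difference is that you spell out the formal-\'etale uniqueness step in more detail (verifying that both candidates are $A$-algebra sections of $\mu_{T/A}$ agreeing on $\varphi(R)$), whereas the paper leaves this to a diagram chase; in substance the arguments coincide.
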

	\begin{proof}
		From \autoref{rem_existence_dhat_map} and \autoref{prop_represen_diff_opp}, there is a unique $T$-linear map $\psi^m \colon P_{T/A}^m \rightarrow T \otimes_R P_{R/A}^m$ such that 
		$$
		\widehat{d}_{T/A}^m \;=\; \psi^m  \circ d_{T/A}^{m}.
		$$ 
		To conclude the proof, we construct  the $T$-linear map $\phi^m \colon T \otimes_R P_{R/A}^m \rightarrow P_{T/A}^m$ as the explicit inverse of $\psi^m$.
		There is a natural induced map $P^m_\varphi \colon P_{R/A}^m \rightarrow P_{T/A}^m,\, \overline{r_1 \otimes_A r_2} \mapsto \overline{\varphi(r_1) \otimes_A \varphi(r_2)}$.
		We then set 
		$$
		\phi^m \colon T \otimes_R P_{R/A}^m \rightarrow P_{T/A}^m, \quad t \otimes w \mapsto tP_{\varphi}^m(w).
		$$
		Notice that we have the following commutative diagram 
		\begin{center}
			\begin{tikzpicture}[baseline=(current  bounding  box.center)]
				\matrix (m) [matrix of math nodes,row sep=4.5em,column sep=7em,minimum width=2em, text height=1.3ex, text depth=0.25ex]
				{
					T & & T \\
					& T \otimes_R P_{R/A}^m & \\
					R & &P_{T/A}^m. \\
				};
				\path[-stealth]
				(m-3-1) edge node [left] {$\varphi$} (m-1-1)
				(m-1-1) edge node [above] {$1$} (m-1-3)
				(m-1-1) edge node [above] {$\widehat{d}_{T/A}^m$} (m-2-2)
				(m-2-2) edge node [above] {$\phi^m$} (m-3-3)
				(m-3-1) edge node [above] {$1 \otimes_R d_{R/A}^m\quad\quad\;\;\;\,$} (m-2-2)
				(m-3-1) edge node [above] {$\phi^m \circ(1 \otimes_R d_{R/A}^m)$} (m-3-3)
				(m-3-3) edge node [right] {$\mu_{T/A}$} (m-1-3)
				(m-2-2) edge node [above] {$T \otimes_R\mu_{R/A}\quad\quad\;\;\;$} (m-1-3)
				;	
			\end{tikzpicture}	
		\end{center}
		Since the map $d_{T/A}^m \colon T \rightarrow P_{T/A}^m$ commutes with the outer square of this commutative diagram and the ring map $\varphi \colon R \rightarrow T$ is formally \'etale, we obtain the equality 
		$$
		d_{T/A}^m \;=\; \phi^m \circ \widehat{d}_{T/A}^m.
		$$ 
		The $T$-module $P_{T/A}^m$ is generated by the images $d_{T/A}^m(t)$ for all $t \in T$.
		Similarly, as $T \otimes_R P_{R/A}^m$ is generated as a $T$-module by the images $1 \otimes_R d_{R/A}^m(r)$ for all $r \in R$, it follows that $T \otimes_R P_{R/A}^m$ is also generated as a $T$-module by the image $\widehat{d}_{T/A}^m(t)$ for all $t \in T$.
		Therefore, the following equalities 
		\begin{equation*}
			\begin{array}{ccc}
				\widehat{d}_{T/A}^m &=\;\, \psi ^m \circ d_{T/A}^m &=\;\, (\psi^m \circ \phi^m) \circ \widehat{d}_{T/A}^m \\
				d_{T/A}^m &=\;\, \phi ^m \circ \widehat{d}_{T/A}^m &=\;\, (\phi^m \circ \psi^m) \circ d_{T/A}^m
			\end{array}
		\end{equation*}
		imply that $\psi^m \circ \phi^m = \text{id}_{T \otimes_R P_{R/A}^m}$ and $\phi^m \circ \psi^m = \text{id}_{P_{T/A}^m}$.
		This completes the proof of the proposition.
	\end{proof}

	\begin{theorem}
		\label{thm_diff_ops_etale}
		Let $\varphi\colon R \rightarrow T$ be a formally \'etale ring map of $A$-algebras.
		The following statements hold:
		\begin{enumerate}[\rm (i)]
			\item There is an induced $A$-algebra map $\varphi \colon D_{R/A} \rightarrow D_{T/A}$ such that, for any $\delta \in D_{R/A}$, the image $\delta' = \varphi(\delta)$ is the unique differential operator in $D_{T/A}$ that satisfies  
			$$
			\delta'\big(\varphi(r)\big) \;=\; \varphi\big(\delta(r)\big) \quad \text{ for all \quad $r \in R$}.
			$$
			
			\item If $P_{R/A}^m$ is a finitely presented $R$-module for all $m \ge 0$, then $\varphi \colon D_{R/A} \rightarrow D_{T/A}$ induces an isomorphism 
			$$
			T \otimes_R D_{R/A} \; \xrightarrow{\;\cong\;} D_{T/A}, \quad t \otimes_R \delta \,\mapsto\, t\varphi(\delta).
			$$		
		\end{enumerate}		
	\end{theorem}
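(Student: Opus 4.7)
The plan is to build the map in (i) using \autoref{prop_represen_diff_opp} together with the isomorphism $\phi^m$ of \autoref{prop_isom_princ_part_etale}, and then to promote this to an isomorphism in (ii) via the standard tensor-Hom compatibility for finitely presented modules.

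For part (i), fix $\delta \in D_{R/A}^m$ and let $\tilde{\delta}\colon P_{R/A}^m \to R$ be the $R$-linear map corresponding to $\delta$ under \autoref{prop_represen_diff_opp}, so that $\delta = \tilde{\delta} \circ d_{R/A}^m$. Base changing along $\varphi$ yields a $T$-linear map $1 \otimes_R \tilde{\delta}\colon T \otimes_R P_{R/A}^m \to T$. Composing with the inverse $(\phi^m)^{-1}\colon P_{T/A}^m \to T \otimes_R P_{R/A}^m$ produces a $T$-linear map $P_{T/A}^m \to T$, which by \autoref{prop_represen_diff_opp} corresponds to an element $\delta' \in D_{T/A}^m$; I set $\varphi(\delta) := \delta'$. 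The compatibility $\delta'(\varphi(r)) = \varphi(\delta(r))$ follows by unwinding the diagram in \autoref{prop_isom_princ_part_etale}: one has $d_{T/A}^m \circ \varphi = \phi^m \circ \widehat{d}_{T/A}^m \circ \varphi = \phi^m \circ (1 \otimes_R d_{R/A}^m)$, and applying $\tilde{\delta}' = (1 \otimes_R \tilde{\delta}) \circ (\phi^m)^{-1}$ gives exactly $\varphi \circ \delta$ on $R$. Uniqueness is immediate, since (as noted in the proof of \autoref{prop_isom_princ_part_etale}) the $T$-module $T \otimes_R P_{R/A}^m$ is generated by $\widehat{d}_{T/A}^m(t)$, $t \in T$, so a $T$-linear functional on $P_{T/A}^m$ is determined by its values on $d_{T/A}^m(\varphi(R))$. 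Multiplicativity of $\varphi$ on $D_{R/A}$ is then a routine consequence of this uniqueness applied to the composition of two operators.

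For part (ii), the hypothesis that each $P_{R/A}^m$ is finitely presented makes the natural tensor-Hom map
$$
T \otimes_R \Hom_R(P_{R/A}^m, R) \;\longrightarrow\; \Hom_T(T \otimes_R P_{R/A}^m, T)
$$
an isomorphism. Chaining this with $\phi^m$ and with the representability \autoref{prop_represen_diff_opp} yields an isomorphism $T \otimes_R D_{R/A}^m \xrightarrow{\cong} D_{T/A}^m$ for each $m$. A direct diagram chase shows that under this identification $t \otimes_R \delta$ is sent to $t \cdot \varphi(\delta)$, matching the formula in the statement. Taking the filtered union over $m$ then gives the desired isomorphism $T \otimes_R D_{R/A} \xrightarrow{\cong} D_{T/A}$.

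I expect the main obstacle to be bookkeeping: reconciling the explicit formula $t \otimes_R \delta \mapsto t\,\varphi(\delta)$ with the composed tensor-Hom isomorphism, and carefully tracking the two distinct $R$-module structures on $R \otimes_A R$ (and hence on $P_{R/A}^m$) so that $\phi^m$ is applied on the correct side. There is no substantial content beyond \autoref{prop_isom_princ_part_etale}; the difficulty is essentially diagrammatic.
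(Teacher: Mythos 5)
Your proposal is correct and follows essentially the same route as the paper: both parts go through \autoref{prop_represen_diff_opp}, the isomorphism $\phi^m$ of \autoref{prop_isom_princ_part_etale}, and (for part (ii)) the flat base change of $\Hom$ out of a finitely presented module; the paper merely packages part (i) as a chain of canonical isomorphisms through $\Diff^m_{R/A}(R,T)$ while you track the corresponding functional $(1\otimes_R\tilde\delta)\circ(\phi^m)^{-1}$ explicitly. One small slip: for uniqueness you should cite the fact (also recorded in the proof of \autoref{prop_isom_princ_part_etale}) that $T\otimes_R P^m_{R/A}$ is generated over $T$ by $1\otimes_R d^m_{R/A}(r)$, $r\in R$, whose image under $\phi^m$ is $d^m_{T/A}(\varphi(r))$, rather than generation by $\widehat{d}^m_{T/A}(t)$, $t\in T$; the former is what lets you conclude a $T$-linear functional on $P^m_{T/A}$ is determined on $d^m_{T/A}(\varphi(R))$.
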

	\begin{proof}
		(i)
		We have a natural map $D_{R/A}^m \rightarrow \Diff_{R/A}^m(R, T), \, \delta \mapsto \varphi \circ \delta$.
		By utilizing \autoref{prop_represen_diff_opp}, \autoref{prop_isom_princ_part_etale} and the Hom-tensor adjointness, we obtain the isomorphisms
		\begin{align*}
			\Diff_{R/A}^m(R, T) \,\cong\, \Hom_R(P_{R/A}^m, T) \,\cong\, \Hom_T(T \otimes_R P_{R/A}^m, T) \,\cong\, \Hom_T(P_{T/A}^m, T) \,\cong\, D_{T/A}^m.
		\end{align*}
		We define $\varphi^m$ as the composition $D_{R/A}^m \rightarrow \Diff_{R/A}^m(R, T) \xrightarrow{\cong} D_{T/A}^m$.
		Finally, the map $\varphi \colon D_{R/A} \rightarrow D_{T/A}$ is given by taking the direct limit $\varphi = \lim_{\rightarrow} \varphi^m$.
		The uniqueness assertion follows from the fact that $d_{T/A}^m$ and $\widehat{d}_{T/A}^m$ solve the same universal problem of representing differential operators in $D_{T/A}^m$ (see \autoref{prop_isom_princ_part_etale}).
		
		(ii) Since $\varphi \colon R \rightarrow T$ is flat (see, e.g., \cite[Th\'eor\`eme 19.7.1]{EGAIV_I}) and $P_{R/A}^m$ is a finitely presented $R$-module, we now obtain the isomorphisms 
		$$
		T \otimes_R D_{R/A}^m \,\cong\, T \otimes_R \Hom_R(P_{R/A}^m, R) \,\cong\,  \Hom_T(T \otimes_R P_{R/A}^m, T) \,\cong\, \Hom_T(P_{T/A}^m, T) \,\cong\, D_{T/A}^m.
		$$
		 As a consequence, the induced map $T \otimes_R D_{R/A}^m \rightarrow D_{T/A}^m$ is an isomorphism.
		 Finally, by taking a direct limit, it follows that the induced map $T \otimes_R D_{R/A} \rightarrow D_{T/A}$ is also an isomorphism.
	\end{proof}

	A distinguished type of formally \'etale ring map is localization. 
	The following remark describes the well-known behavior of differential operators under localization. 
	
	\begin{remark}[Localization of differential operators]
		\label{rem_localization_diff}
		Let $W \subset R$ be a multiplicatively closed subset.
		\begin{enumerate}[\rm (i)]
			\item If $P_{R/A}^m$ is finitely presented for all $m \ge 0$, then we obtain the natural isomorphisms:
			\begin{enumerate}[\rm (a)]
				\item $D_{W^{-1}R/(W \cap A)^{-1}A} \;\cong\; D_{W^{-1}R/A} \;\cong\; W^{-1}R \otimes_R D_{R/A}$.\smallskip
				\item $
				\Delta(W^{-1}R/(W \cap A)^{-1}A) \;\cong\; \Delta(W^{-1}R/A) \;\cong\; W^{-1}R \otimes_R \Delta(R/A).
				$
			\end{enumerate}
			\item Given $\delta \in D_{R/A}^m$, we extend it to an element $\delta' \in D_{W^{-1}R/(W \cap A)^{-1}A}$.
			We proceed by induction on $m$.
			If $m=0$, then $\delta \in \Hom_R(R,R)=R$ and $\delta'$ is defined by setting $\delta'(\frac{r}{w}) = \frac{\delta(r)}{w}$ for all $r \in R, w \in W$.	If $m > 0$, then we set
			$$ \qquad \delta'\Big(\frac{r}{w}\Big) \,=\, \frac{\delta(r) - [\delta,w]'(\frac{r}{w})}{w}
			\qquad \text{for all $r \in R$ and $w \in W$.}
			$$
			This is well-defined by the induction hypothesis and the fact that $[\delta,w] \in D_{R/A}^{m-1}$.
		\end{enumerate}
	\end{remark}
	\begin{proof}
		The result now follows from \autoref{thm_diff_ops_etale}.
		Alternatively, see \cite[Proposition 2.17]{BJNB}, \cite[\S 16]{EGAIV_IV}.
	\end{proof}

	We have the following finite generation results that are inspired by a quite pleasing result of Lyubeznik \cite{LYUBEZNIK_BS_POLY}.
	We first deal with a polynomial ring $P$ over the coefficient ring $A$ and then with \'etale $P$-algebras via \autoref{thm_diff_ops_etale}.

	\begin{proposition}
		\label{prop_finite_gen_loc}
		Let $A$ be Noetherian domain containing the field $\QQ$ of rational numbers and $P = A[x_1,\ldots,x_r]$ be a polynomial ring over $A$.
		Suppose that $\varphi \colon P \rightarrow R$ is an \'etale ring map of $A$-algebras.
		Then the following statements hold:
		\begin{enumerate}[\rm (i)]
			\item For any $f \in P$, there exists a nonzero element $a \in A$ such that $P_f \otimes_{A} A_a$ is a finitely generated left module over $D_{P/A} \otimes_{A} A_a$.
			\item For any $f \in R$, there exists a nonzero element $a \in A$ such that $R_f \otimes_{A} A_a$ is a finitely generated left module over $D_{R/A} \otimes_{A} A_a$.
		\end{enumerate}		
	\end{proposition}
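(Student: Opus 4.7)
The plan is to establish a cyclic generation $P_f = D_{P/A}\cdot f^{-N}$ (resp.\ $R_f = D_{R/A}\cdot f^{-N}$) after inverting a suitable $a \in A$, via a Bernstein--Sato functional equation pulled back from the fraction field $K = \mathrm{Frac}(A)$. The key trick will be to combine the rationality of roots of the Bernstein--Sato polynomial with the hypothesis $\QQ \subset A$ to turn certain nonzero integer values into units after localization.

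For (i), I would first pass to $P_K = P \otimes_A K = K[x_1,\ldots,x_r]$. The classical theorem of Bernstein yields a nonzero $b(s) \in K[s]$ and an operator $\mathcal{P}(s) \in D_{P_K/K}[s]$ satisfying $\mathcal{P}(s)\cdot f^{s+1} = b(s)\cdot f^s$, and by \autoref{thm: BS rationality} all roots of $b(s)$ are rational. Using the base-change isomorphism $D_{P_K/K} \cong D_{P/A} \otimes_A K$ from \autoref{lem_base_change_diff}, I would then clear denominators to obtain a nonzero $a_0 \in A$ such that the functional equation lives over $A_{a_0}$, the leading coefficient of $b$ becomes a unit in $A_{a_0}$, and the integer denominators of all roots $\beta_i = p_i/q_i$ also become units.

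Next I would choose $N$ so that $-k \neq \beta_i$ for every $i$ and every $k \geq N$. For such $k$, the value $b(-k)$ equals a unit of $A_{a_0}$ times the nonzero integer $\prod_i(-kq_i - p_i) \in \ZZ$; since $\QQ \subset A$, this integer is automatically a unit in $A$. Specializing the Bernstein--Sato equation at $s = -k-1$ then yields
$$f^{-k-1} \;=\; b(-k-1)^{-1}\,\mathcal{P}(-k-1)\cdot f^{-k} \;\in\; \bigl(D_{P/A}\otimes_A A_{a_0}\bigr)\cdot f^{-k}$$
for all $k \geq N-1$. Iterating, every $f^{-k}$ with $k \geq N$ lies in $(D_{P/A}\otimes_A A_{a_0})\cdot f^{-N}$, and combining with the multiplicative action of $P \subset D_{P/A}$ on $f^{-N}$ to recover all non-negative powers of $f$, I would conclude $P_f \otimes_A A_{a_0} = (D_{P/A}\otimes_A A_{a_0})\cdot f^{-N}$, which is cyclic.

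Part (ii) runs along the same lines after replacing $P$ by $R$: since $R$ is \'etale over $P$, the algebra $R_K$ is smooth over $K$, so the Bernstein--Sato equation for $f \in R_K$ exists with rational roots (\autoref{thm: BS rationality}), and the base-change $D_{R_K/K} \cong D_{R/A} \otimes_A K$ follows by combining \autoref{thm_diff_ops_etale}(ii) with \autoref{lem_base_change_diff}. The main obstacle I anticipate is coordinating the clearing of denominators: one must find a single $a_0$ which simultaneously makes the coefficients of $\mathcal{P}(s)$, the coefficients of $b(s)$, and the denominators of the roots $\beta_i$ lie in $A_{a_0}$, and for which the ambient base-change isomorphism for differential operators descends to $A_{a_0}$. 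Once this bookkeeping is in place, the integer-to-unit trick provided by $\QQ \subset A$ makes the induction on powers of $f$ entirely uniform.
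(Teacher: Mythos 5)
Your proof of part (i) follows the paper's argument closely: pass to the fraction field $K=\Quot(A)$, invoke the Bernstein--Sato functional equation $b(s)f^s=\delta(s)f^{s+1}$ over $P_K$, use rationality of the roots so that $b(k)\in\QQ^\times\subset A^\times$ for $|k|$ large, clear denominators from $\delta(s)$ (the only place localization is genuinely needed), and iterate. One minor simplification is available to you: once you know the roots are rational you have $b(s)\in\QQ[s]$ outright, so there is no need to separately track the leading coefficient or the "denominators of the roots" --- they are already units in $A$ since $\QQ\subset A$, and the only denominators to clear are those of the finitely many coefficients of $\delta(s)\in D_{P_K/K}[s]$.

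For part (ii) your route genuinely diverges from the paper's. You propose to apply the Bernstein--Sato machinery directly to $f\in R_K$, using the existence and $\QQ$-rationality of the Bernstein--Sato polynomial for a \emph{smooth} algebra over a characteristic-zero field (\autoref{thm: BS rationality}), together with the base-change isomorphism $D_{R_K/K}\cong D_{R/A}\otimes_A K$ from \autoref{lem_base_change_diff} (you do not actually need \autoref{thm_diff_ops_etale} here, since $R$ is already smooth over $A$). The paper instead reduces to the polynomial case: it finds $g\in P$ so that $P_g\to R_f$ is module-finite \'etale, applies part (i) to $g\in P$, uses an integral equation of $1/f$ over $P_g$ to express each $1/f^k$ as a $P_g$-combination of nonnegative powers of $f$, and then transports operators from $P_g$ to $R_f$ via the \'etale isomorphism $R\otimes_P D_{P/A}\cong D_{R/A}$. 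Your approach is more uniform and avoids the integrality bookkeeping, at the cost of relying on the general smooth-algebra Bernstein--Sato results (which the paper does prove, in \autoref{thm: BS existence}--\autoref{thm: BS rationality}, in a logically independent way --- so there is no circularity). The paper's route is slightly more elementary in that it consumes only the classical polynomial-ring Bernstein--Sato theory inside part (ii). Both are correct.
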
  
	\begin{proof}
		Let $D = D_{P/A}$, $\widehat{D} = D_{R/A}$, $K=\Quot(A)$, $P' = P \otimes_A K$ and $D' = D_{P'/K} \cong D_{P/A} \otimes_{A} K$.
		
		(i)	
		Let $f \in P$.	
		From the existence of the Bernstein--Sato polynomial (see \cite[Theorem 3.3, page 94]{COUTINHO}, \cite[Theorem 5.7, page 14]{BJORK}), we obtain a polynomial $b_f(s) \in K[s]$ (the Bernstein--Sato polynomial of $f$) and an operator $\delta(s) \in D'[s]$ that satisfy the following functional equation
		$$
		b_f(s)\, f^{s} = \delta(s)\, f^{s+1}.
		$$
		A known important result (see \cite[Proposition 2.11]{LEYKIN_BS_POLY}, \cite{KASHIWARA}, \cite{MALGRANGE}) says that $b_f(s)$ has rational roots; in particular,  $b_f(s) \in \QQ[s] \subset K[s]$.
		In \autoref{thm: BS rationality}, we present a proof of this result for any smooth algebra over a field of characteristic zero.
		Write $\delta(s) = \sum_{j=1}^l c_j \xx^{\alpha_j}\partial^{\beta_j}s^{\gamma_j} \in D'[s]$ with $c_j \in K$, $\alpha_j, \beta_j \in \NN^r$ and $\gamma_j \in \NN$.
		We can collect all the denominators in $A$ of the coefficients $c_j \in K$, and then localize at a suitable $0 \neq a \in A$ and assume that $\delta(s) \in D[s] \otimes_{A} A_a$.
		Let $\ell$ be a positive integer such that $b_f(k) \neq 0$ for all $k \le -\ell$.
		Therefore, 
		$$
		f^k \;=\; \frac{\delta(k)}{b_f(k)}f^{k+1} \;\in\; (D \otimes_A A_a) \cdot f^{k+1}
		$$ for all $k \le -\ell$, and by induction it follows that $P_f \otimes_{A} A_a$ is generated by $f^{-\ell}$ as a $(D \otimes_{A} A_a)$-module.
		
		(ii) 
		Let $f \in R$.
		Since $\Quot(P) \rightarrow R_f \otimes_P \Quot(P)$ is an \'etale ring map, it follows that $R_f \otimes_P \Quot(P)$ is isomorphic to the product of finitely many finite separable field extensions of $\Quot(P)$ (see, e.g., \cite[\href{https://stacks.math.columbia.edu/tag/00U3}{Tag 00U3}]{stacks-project}), and so, in particular, $R_f \otimes_P \Quot(P)$ is a finite dimensional vector space over $\Quot(P)$.
		Therefore, we may choose some nonzero element $g \in P$ such that $\varphi \colon P_g \rightarrow R_f$ is a module-finite \'etale ring map.
		
		From part (i), there exist a nonzero element $a \in A$ and a positive integer $\ell$ such that $P_g$ is generated by $g^{-\ell}$ as a $(D\otimes_A A_a)$-module.
		Consider an integral equation 
		$$
		\Big(\frac{1}{f}\Big)^m - c_{m-1}\Big(\frac{1}{f}\Big)^{m-1} - \cdots - c_{1}\Big(\frac{1}{f}\Big) - c_0 \,=\, 0  \; \in  R_f
		$$
		of $1/f$ over $P_g$, with $c_i \in P_g$.
		Hence, for all $k \ge 1$, we get the equality 
		$$
		\frac{1}{f^k} \,=\, \left( fc_{m-1} + \cdots + f^{m-1}c_1 + f^mc_0 \right)^k.
		$$
		This shows that $1/f^k$ can be written as a linear combination of elements of the form $f^ec$ with $e \ge 0$ and $c \in P_g$.
		Due to \autoref{thm_diff_ops_etale}, there is an isomorphism $
		R \otimes_P D \xrightarrow{\cong} \widehat{D},\,  r \otimes_R \delta \mapsto r\varphi(\delta).
		$
		By combining everything, we obtain a differential operator $\widehat{\delta}_k \in D_{R_a/A_a} \cong \widehat{D} \otimes_{A} A_a$ such that
		$$
		\frac{1}{f^k} \;=\; \widehat{\delta}_k\Big(\frac{1}{g^\ell
		}\Big).
		$$
		So, it follows that $R_f$ is generated by $\varphi(g^{-\ell})$ as a $(\widehat{D} \otimes_{A} A_a)$-module.
		Of course, as a consequence, there is some positive integer $\widehat{\ell}$ such that $R_f$ is generated by $f^{-\widehat{\ell}}$ as a $(\widehat{D} \otimes_A A_a)$-module.
	\end{proof}

	We derive some consequences for smooth algebras over the coefficient ring $A$, and for that purpose the following remark will be important.

	\begin{remark}
		\label{rem_cover_smooth_etale}
		Let $R$ be a smooth $A$-algebra.
		Then there exist finitely elements $g_1,\ldots,g_c \in R$ generating the unit ideal in $R$ and such that, for $1 \le i \le c$, there is a polynomial ring $P_i$ over $A$ such that $P_i \rightarrow R_{g_i}$ is an \'etale ring map.  
	\end{remark}
	\begin{proof}
		See \cite[\href{https://stacks.math.columbia.edu/tag/054L}{Tag 054L}]{stacks-project}.
	\end{proof}

	We are now ready for our main result regarding the behavior of differential operators in a relative smooth setting over a coefficient ring.

	\begin{theorem}
		\label{thm_gen_prop_smooth_A}
		Let $A$ be a Noetherian ring containing the field $\QQ$ of rational numbers and $R$ be a smooth $A$-algebra.
		Then the following statements hold: 
		\begin{enumerate}[\rm (i)]
			\item $D_{R/A} = \Delta(R/A)$. 
			In particular,
			$
			\gr(D_{R/A}) = \bigoplus_{m=0}^\infty D^{m}_{R/A}/D^{m-1}_{R/A}
			$ 
			is a Noetherian commutative ring and $D_{R/A}$ is a Noetherian ring.
			\item $D_{R/A}$ is strongly right Noetherian.
			\item Suppose that $A$ is a domain. 
			For any $f \in R$, there exists a nonzero element $a \in A$ such that $R_f \otimes_{A} A_a$ is a finitely generated left module over $D_{R/A} \otimes_{A} A_a$.
		\end{enumerate}
	\end{theorem}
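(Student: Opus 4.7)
The proof of all three parts exploits the \'etale covering from \autoref{rem_cover_smooth_etale}: write $(g_1, \ldots, g_c) = R$ with each $R_{g_i}$ \'etale over a polynomial ring $P_i$ over $A$. Throughout, smoothness (of finite presentation) makes each $P_{R/A}^m$ finitely presented (in fact locally free of finite rank), so both the base-change \autoref{lem_base_change_diff} and the \'etale comparison \autoref{thm_diff_ops_etale}(ii) apply, and \autoref{rem_localization_diff} lets us commute $D_{-/A}$ and $\Delta(-/A)$ with localization at each $g_i$.

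For part (i), I first treat the polynomial case. When $P = A[x_1, \ldots, x_r]$ with $\QQ \subseteq A$, $D_{P/A}$ is the relative Weyl algebra (\autoref{rem_relative_Weyl_alg}), generated as an $A$-algebra by the $x_j$ together with $\partial_j \in \Der_{P/A}$, so $D_{P/A} = \Delta(P/A)$. For the \'etale map $P_i \to R_{g_i}$, combining \autoref{thm_diff_ops_etale}(ii) with the standard base change $R_{g_i} \otimes_{P_i} \Der_{P_i/A} \cong \Der_{R_{g_i}/A}$ for \'etale maps yields $D_{R_{g_i}/A} = \Delta(R_{g_i}/A)$. To globalize, \autoref{rem_localization_diff} shows that the inclusion $\Delta(R/A) \hookrightarrow D_{R/A}$ becomes an equality after each localization $R \to R_{g_i}$; since $(g_1, \ldots, g_c) = R$, it is already an equality. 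The remaining Noetherianity assertions then follow from \autoref{prop_derivation_ring}(ii), since $R$ is essentially of finite type over $A$.

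For part (ii), let $B$ be a Noetherian commutative $A$-algebra. Smoothness is preserved under base change, so $R \otimes_A B$ is a smooth $B$-algebra and $\QQ \subseteq B$. Then \autoref{lem_base_change_diff} gives $D_{R/A} \otimes_A B \cong D_{(R \otimes_A B)/B}$, and part (i) applied to the smooth $B$-algebra $R \otimes_A B$ yields that this ring is Noetherian, proving that $D_{R/A}$ is strongly right Noetherian.

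Part (iii) is where the real work lies and is the main obstacle. For each $i$, apply \autoref{prop_finite_gen_loc}(ii) to the \'etale map $P_i \to R_{g_i}$ and the element $f \in R_{g_i}$ to obtain a nonzero $a_i \in A$ such that $(R_{g_i})_f \otimes_A A_{a_i}$ is finitely generated over $D_{R_{g_i}/A} \otimes_A A_{a_i}$. Set $a = a_1 \cdots a_c$; after clearing powers of $g_i$ from denominators, choose finite sets $N_i \subseteq R_f \otimes_A A_a$ whose images generate $(R_f)_{g_i} \otimes_A A_a$ as a $D_{R/A} \otimes_A A_a$-module, using \autoref{rem_localization_diff} to identify $D_{R_{g_i}/A}$ with $R_{g_i} \otimes_R D_{R/A}$. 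The remaining task is to show that the $D_{R/A} \otimes_A A_a$-submodule $M \subseteq R_f \otimes_A A_a$ generated by $\bigcup_i N_i$ is the entire module. By construction $M_{g_i}$ agrees with $(R_f \otimes_A A_a)_{g_i}$ for every $i$; since $(g_1, \ldots, g_c) = R$ implies $(g_1^N, \ldots, g_c^N) = R$ for all $N \ge 1$, a standard partition-of-unity argument (picking $h_i \in R$ with $\sum_i h_i g_i^N = 1$ for $N$ killing the denominators) forces $M = R_f \otimes_A A_a$. This sheaf-theoretic globalization on $\Spec(R_f)$ is the only ingredient beyond the polynomial-ring case already handled in \autoref{prop_finite_gen_loc}.
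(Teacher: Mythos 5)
Your proposal is correct and follows the paper's strategy closely for all three parts: the \'etale covering of \autoref{rem_cover_smooth_etale}, the Weyl-algebra description of $D_{P_i/A}$ for the polynomial subring, the \'etale/localization transfer of \autoref{thm_diff_ops_etale} and \autoref{rem_localization_diff}, the base-change lemma for (ii), and \autoref{prop_finite_gen_loc}(ii) for (iii). The only place where you take a slightly different route is the globalization in part (iii): the paper works with a \emph{single} generator $f^{-\ell_i}$ on each chart, picks $k > \max_i \ell_i$, produces explicit identities $\tfrac{g_i^{e_i+s_i}}{f^k} = g_i^{s_i}\delta_i\bigl(\tfrac{1}{f^{\ell_i}}\bigr)$ in $R_{a_if}$, and then sums against a partition of unity $\sum_i \beta_i g_i^{e_i+s_i} = 1$ to land $1/f^k$ in the $D$-span of the $f^{-\ell_i}$'s, whence all $1/f^k$ and thus $R_f$. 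You instead form the finitely generated $D_{R/A}\otimes_A A_a$-submodule $M$ generated by denominator-cleared local generators and argue that $M_{g_i} = (R_f\otimes_A A_a)_{g_i}$ for each $i$ forces $M = R_f\otimes_A A_a$ by the usual partition-of-unity descent. The two arguments are mathematically equivalent (both hinge on $R \subset D_{R/A}$ and $D_{R_{g_i}/A} \cong R_{g_i}\otimes_R D_{R/A}$); yours is a touch more abstract and avoids tracking the explicit powers $e_i, s_i$, while the paper's is more hands-on and exhibits an actual generating formula. Either way, you should make explicit (as you did implicitly) that clearing denominators from the local generators does not change the $D_{R_{g_i}/A}$-submodule they generate, because $g_i$ is a unit in $R_{g_i} \subset D_{R_{g_i}/A}$.
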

	\begin{proof}	
		By \autoref{rem_cover_smooth_etale}, let $g_1,\ldots,g_c \in R$ be elements generating the unit ideal and such that $P_i \rightarrow R_{g_i}$ is an \'etale ring map and $P_i$ is a polynomial ring over $A$.
		
		(i) It suffices to show that $R_{g_i} \otimes_R D_{R/A} = R_{g_i} \otimes_R \Delta(R/A)$ for all $1 \le i \le c$.
		From \autoref{rem_localization_diff}, this is equivalent to check the equality $D_{R_{g_i}/A} = \Delta(R_{g_i}/A)$ for all $1 \le i \le c$.
		By applying \autoref{thm_diff_ops_etale} to the \'etale ring map $\varphi_i \colon P_i \rightarrow R_{g_i}$ we obtain an isomorphism 
		$$
		R_{g_i} \otimes_{P_i} D_{P_i/A}  \xrightarrow{\cong} D_{R_{g_i}/A}, \quad r \otimes_{P_i} \delta \mapsto r\varphi_i(\delta).
		$$
		Since $D_{P_i/A}$ is a relative Weyl algebra (see \autoref{rem_relative_Weyl_alg}), it is clear that $D_{P_i/A} = \Delta(P_i/A)$.
		As a consequence, we obtain the required equality $D_{R_{g_i}/A} = \Delta(R_{g_i}/A)$.
		The additional claims follow from \autoref{prop_derivation_ring}.
		
		(ii) Let $B$ be a Noetherian $A$-algebra. 
		By \autoref{lem_base_change_diff}, we have $D_{R/A} \otimes_{A} B \cong D_{(R\otimes_{A} B)/B}$, and so the already proved part (i) implies that $D_{R/A} \otimes_{A} B$ is Noetherian. 
		So it follows that $D_{R/A}$ is strongly right Noetherian.
		
		(iii) 
		By applying \autoref{prop_finite_gen_loc} to the \'etale ring map $P_i \rightarrow R_{g_i}$, we obtain a nonzero element $a_i \in A$ and a positive integer $\ell_i$ such that $\left(R_{g_i}\right)_f \otimes_{A} A_{a_i}= R_{a_ifg_i}$ is generated by $f^{-\ell_i}$ as a $(D_{R_{g_i}/A} \otimes_{A} A_{a_i})$-module.
		Suppose $k$ is an integer larger than all the $\ell_i$'s. 
		Therefore we may choose a differential operator $\delta_i/g_i^{e_i} \in D_{R_{a_ig_i}/A_{a_i}} = D_{R_{g_i}/A} \otimes_{A} A_{a_i}$, with $\delta_i \in D_{R_{a_i}/A_{a_i}} = D_{R/A} \otimes_A A_{a_i}$ and $e_i \ge 0$, such that 
		$$
		\frac{1}{f^k} \;=\; \frac{\delta_i}{g_i^{e_i}}\Big(\frac{1}{f^{\ell_i}}\Big) \;\in\; R_{a_ifg_i}.
		$$
		This induces the equality 
		$$
		\frac{g_i^{e_i}g_i^{s_i}}{f^k} \;=\; g_i^{s_i}\delta_i\Big(\frac{1}{f^{\ell_i}}\Big) \; \in \; R_{a_if}
		$$
		with some $s_i \ge 0$.
		Since the elements $g_1^{e_1+s_1}, \ldots, g_c^{e_c+s_c}$ also generate the unit ideal in $R$, we may find elements $\beta_1,\ldots,\beta_c \in R$ such that $\beta_1g_1^{e_1+s_1} + \cdots \beta_cg_c^{e_c+s_c} = 1$.
		By summing up, we obtain the equation 
		$$
		\frac{1}{f^k} \;=\; \frac{\beta_1g_1^{e_1+s_1} + \cdots +\beta_cg_c^{e_c+s_c}}{f^k} \;=\; \beta_1 g_1^{s_1}\delta_1\Big(\frac{1}{f^{\ell_1}}\Big) + \cdots + \beta_cg_c^{s_c} \delta_c\Big(\frac{1}{f^{\ell_c}}\Big) \; \in\; R_{a_1\cdots a_c f}. 
		$$
		Therefore, after taking $a = a_1\cdots a_c \in A$, it follows that $R_f \otimes_{A} A_a$ is a finitely generated left module over $D_{R/A} \otimes_{A} A_a$.
		This completes the proof of part (iii).
	\end{proof}

	\begin{corollary}
		\label{cor_gen_free_D_R/A}
		Adopt the same assumptions of \autoref{thm_gen_prop_smooth_A}.
		Then, for any finitely generated left $D_{R/A}$-module $M$, there is a nonzero element $a \in A$ such that $M \otimes_A A_a$ is a free $A_a$-module.
	\end{corollary}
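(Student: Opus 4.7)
The strategy is to invoke \autoref{thm: noncom gen freeness} applied to $D = D_{R/A}$, equipped with the order filtration $D^m_{R/A}$. To do this, I only need to verify the two hypotheses of that theorem, both of which have already been effectively established: that $D_0 = R$ is a finitely generated $A$-algebra (which is immediate since $R$ is smooth, hence of finite presentation, over $A$), and that $\gr(D_{R/A})$ is a finitely generated \emph{commutative} $R$-algebra.

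For the second, more substantive point, I would reason as follows. By \autoref{thm_gen_prop_smooth_A}(i), the hypothesis that $A \supset \QQ$ and that $R$ is smooth over $A$ guarantees $D_{R/A} = \Delta(R/A)$, so in particular $\gr(D_{R/A}) = \gr(\Delta(R/A))$. \autoref{prop_derivation_ring}(i) then supplies a canonical surjection $\Sym_R(\Der_{R/A}) \twoheadrightarrow \gr(\Delta(R/A))$, so it suffices to observe that $\Der_{R/A}$ is a finitely generated $R$-module. This follows because the smoothness of $R/A$ makes $\Omega_{R/A}$ finitely presented (indeed, finitely generated projective), so $\Der_{R/A} = \Hom_R(\Omega_{R/A}, R)$ is itself finitely generated; alternatively, one may quote the finite generation statement recorded in the proof of \autoref{prop_derivation_ring}(ii). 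Either way, $\gr(D_{R/A})$ is a quotient of a finitely generated symmetric $R$-algebra and is therefore a finitely generated commutative $R$-algebra.

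With both hypotheses of \autoref{thm: noncom gen freeness} verified, applied to the Noetherian domain $A$, the finitely generated $A$-algebra $R$, and the filtered ring $D_{R/A}$, we conclude that for every finitely generated left $D_{R/A}$-module $M$ there exists $0 \neq a \in A$ such that $M \otimes_A A_a$ is a free $A_a$-module. Since each of the ingredients has been carefully prepared in \autoref{sect_gen_free_filtered} and in \autoref{thm_gen_prop_smooth_A}, this final step is essentially a one-line citation, and I do not foresee any real obstacle beyond the bookkeeping of checking that the order filtration on $D_{R/A}$ satisfies $D_0 = R$ and $D_{-1} = 0$, as required by the statement of \autoref{thm: noncom gen freeness}.
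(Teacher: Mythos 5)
Your proof is correct and takes precisely the route the paper intends: the paper's proof is the one-line citation of \autoref{thm_gen_prop_smooth_A} together with \autoref{thm: noncom gen freeness}, and you have simply unpacked the hypothesis-checking (that $R = D^0_{R/A}$ is a finitely generated $A$-algebra and that $\gr(D_{R/A})$ is a finitely generated commutative $R$-algebra, via the identification $D_{R/A} = \Delta(R/A)$ and the surjection $\Sym_R(\Der_{R/A}) \surjects \gr(\Delta(R/A))$) which the authors leave implicit. No discrepancy.
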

	\begin{proof}
		The result follows from \autoref{thm_gen_prop_smooth_A} and \autoref{thm: noncom gen freeness}.
	\end{proof}

	\subsection{Classical case over a field of characteristic zero}
	
	In this subsection, we briefly cover the classical case of a smooth algebra over a field of characteristic zero. 
	Here the main goal is to show that  Bernstein--Sato polynomials have rational roots in any smooth algebra over a field of characteristic zero.
	Our approach is to utilize Kashiwara's result \cite{KASHIWARA} on local Bernstein--Sato polynomials and the techniques of Mebkhout and  Narv{\'a}ez-Macarro \cite{MNM} that allow us to globalize. 
	While the rationality result is usually stated for a polynomial ring,  the aforementioned approach can be used more generally. 
	We point out that this rationality result (in the polynomial ring case) was our main tool in the proof of \autoref{prop_finite_gen_loc}(i) and this more general result can be similarly used to give a different proof of \autoref{thm_gen_prop_smooth_A}(iii).
	
	We first recall some fundamental results about Bernstein--Sato polynomials. Our treatment follows the survey paper \cite{AMJNBSurvey}.

	\begin{definition}[\cite{NB}]
		Let $R$ be a Noetherian algebra over a field $\kk$ of characteristic $0$. 
		We say that $R$ is \emph{differentiably admissible} if $R$ is regular, 
		$\Der_{R/\kk}$ is a projective $R$-module of rank $\dim(R)$, 
		and for every maximal ideal $\mm$ of $R$ the following 
		three conditions are satisfied:
		\begin{enumerate}
			\item $\dim(R_\mm) = \dim(R)$,  
			\item $R/\mm$ is an algebraic extension of $\kk$, 
			\item the natural map $\Der_{R/\kk} \otimes_R R_\mm \to \Der_{R_\mm/\kk}$
			is an isomorphism.
		\end{enumerate}
	\end{definition}

	\begin{observation}
		\label{obs_equidim_smooth}
		If $R$ is an equidimensional smooth algebra over a field $\kk$ of characteristic $0$, then $R$ is differentiably admissible.
	\end{observation}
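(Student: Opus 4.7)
The plan is to verify each of the three clauses of differentiable admissibility in turn, using only standard facts about smooth algebras over a field that will already be available in the paper.

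First I would dispose of the two global conditions. A smooth $\kk$-algebra is regular (this is immediate from the Jacobian criterion, or from formal smoothness together with finite presentation). For the projectivity and rank of $\Der_{R/\kk}$, smoothness of $R/\kk$ gives that $\Omega_{R/\kk}$ is a finitely generated projective $R$-module whose rank at any prime $\pp$ equals the relative dimension of $R$ over $\kk$ at $\pp$, which for smooth $\kk$-algebras coincides with $\dim R_\pp$. Equidimensionality forces this number to be the constant $\dim R$ on every connected component of $\Spec(R)$, and since $\Omega_{R/\kk}$ is then finitely presented over the Noetherian ring $R$, its dual $\Der_{R/\kk} = \Hom_R(\Omega_{R/\kk}, R)$ is also finitely generated projective of rank $\dim R$.

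Next I would handle the three conditions at a fixed maximal ideal $\mm \subset R$. Because $R$ is of finite type (in fact finite presentation) over $\kk$, the Nullstellensatz gives that $R/\mm$ is a finite, hence algebraic, extension of $\kk$. For the equality $\dim R_\mm = \dim R$, I would pick a minimal prime $\pp \subseteq \mm$; by equidimensionality $\dim R/\pp = \dim R$, and for a finitely generated integral domain over a field every maximal ideal has height equal to the Krull dimension (via Noether normalization and the dimension formula), so $\dim R_\mm \ge \dim (R/\pp)_{\mm/\pp} = \dim R$, while the reverse inequality is automatic. The isomorphism $\Der_{R/\kk} \otimes_R R_\mm \xrightarrow{\cong} \Der_{R_\mm/\kk}$ follows by combining the localization formula $\Omega_{R/\kk} \otimes_R R_\mm \cong \Omega_{R_\mm/\kk}$ with the compatibility of $\Hom$ with localization for finitely presented modules, exactly as in \autoref{lem_base_change_diff} and \autoref{rem_localization_diff}.

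None of these steps requires a new idea: the only subtlety, and arguably the ``main'' step, is bookkeeping about what equidimensionality buys us, namely the constancy of the rank of $\Omega_{R/\kk}$ and the equality $\dim R_\mm = \dim R$ for all maximal ideals; both reduce to standard dimension theory for finitely generated algebras over a field.
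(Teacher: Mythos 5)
Your argument is correct and tracks the paper's own (very terse) proof: both verify the conditions of differentiable admissibility directly from smoothness plus equidimensionality, with Stacks Tag 00TT controlling projectivity and the rank of $\Omega_{R/\kk}$. One small slip worth flagging: the rank of $\Omega_{R/\kk}$ at a prime $\pp$ is the \emph{local} dimension $\dim_\pp R$ (the largest dimension of an irreducible component through $\pp$), not $\dim R_\pp$ (which is the height of $\pp$); these agree at maximal ideals but not at arbitrary primes, though since equidimensionality forces $\dim_\pp R = \dim R$ at every point, your conclusion that the rank is globally constant is unaffected.
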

	\begin{proof}
		Since $R$ is finitely generated and equidimensional, the three conditions are satisfied. 
		We know that $\Der_{R/\kk}$ is projective because $R$ is smooth and its rank is constant due to \cite[\href{https://stacks.math.columbia.edu/tag/00TT}{Tag 00TT}]{stacks-project}.
	\end{proof}

	\begin{theorem}[{\cite[Theorem~3.26]{AMJNBSurvey}}]\label{thm: BS existence}
		Let $R$ be a differentiably admissible algebra over a field $\kk$ of characteristic $0$. 
		Then, for any $f \in R$, there exist a polynomial $b_f(s) \in \kk[s]$ {\rm(}the Bernstein--Sato polynomial{\rm)} and an operator $\delta(s) \in D_{R/\kk}[s]$ that satisfy the functional equation
		\[
		b_f(s)\, f^{s} = \delta(s)\, f^{s+1}.
		\]
	\end{theorem}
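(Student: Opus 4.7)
My approach follows Bernstein's classical strategy, adapted from the Weyl-algebra case to the differentiably admissible setting. The outline is: equip $D_{R/\kk}$ with the order filtration; develop the resulting dimension theory; establish Bernstein's inequality; apply it to the formal module $R_f \otimes_\kk \kk(s) \cdot f^s$ to deduce holonomicity; and conclude by a standard chain-stabilization argument that produces the functional equation.

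The order filtration $\{D_{R/\kk}^m\}_{m\ge0}$ together with differentiable admissibility yields a PBW-type identification $\gr(D_{R/\kk}) \cong \Sym_R(\Der_{R/\kk})$. One proves this by localizing at each maximal ideal $\mm$, using that $\Der_{R/\kk}\otimes_R R_\mm \to \Der_{R_\mm/\kk}$ is an isomorphism and that $\Der_{R/\kk}$ is projective of rank $n=\dim R$; compare the analogous step in \autoref{thm_gen_prop_smooth_A}(i), where \autoref{thm_diff_ops_etale} reduces everything to a polynomial algebra. Consequently $\gr(D_{R/\kk})$ is a Noetherian regular commutative ring of Krull dimension $2n$, and for any finitely generated $D_{R/\kk}$-module $M$ with a good filtration we may define
\[
d(M) \,\coloneqq\, \dim \Supp_{\gr D_{R/\kk}}(\gr M),
\]
independent of the chosen good filtration.

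The central technical step is \emph{Bernstein's inequality}: $d(M) \ge n$ for every nonzero finitely generated $M$. I would reduce to a local statement by localizing at a maximal ideal $\mm \in \Supp(M)$ and passing to the $\mm$-adic completion; the residue field is algebraic over $\kk$ by admissibility, so Cohen's structure theorem gives a formal power series ring over a field, the filtered algebra acquires an explicit free PBW basis in $n$ commuting derivations, and the classical Sato--Kashiwara--Bernstein Hilbert-polynomial argument then produces the inequality $d(M) \ge n$. This local-to-global reduction is the main obstacle, and it is precisely where all three conditions in the definition of differentiable admissibility are essential.

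To conclude, set $K=\kk(s)$ and $R_K = R\otimes_\kk K$; then $R_K$ is differentiably admissible over $K$ and $D_{R_K/K} \cong D_{R/\kk}\otimes_\kk K$, so the above dimension theory transfers verbatim. Equip the rank-one free $R_{K,f}$-module $N = R_{K,f}\cdot f^s$ with the Bernstein action $\partial(g\,f^s) = \big(\partial(g) + s\,g\,\partial(f)/f\big)f^s$ of $D_{R_K/K}$. A direct good-filtration computation on the cyclic $D_{R_K/K}$-submodule generated by $f^s$ gives $d \le n$, so by Bernstein's inequality this submodule is holonomic and hence of finite length. The descending chain
\[
D_{R_K/K}\cdot f^{s} \,\supseteq\, D_{R_K/K}\cdot f^{s+1} \,\supseteq\, D_{R_K/K}\cdot f^{s+2} \,\supseteq\, \cdots
\]
(descending because $f^{s+k+1} = f\cdot f^{s+k}$ lies in $D_{R_K/K}\cdot f^{s+k}$) therefore stabilizes, producing some $k \ge 0$ and $\widetilde{\delta}\in D_{R_K/K}$ with $f^{s+k} = \widetilde{\delta}\cdot f^{s+k+1}$. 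Writing $\widetilde{\delta} = \delta'(s)/b(s)$ with $\delta'(s)\in D_{R/\kk}[s]$ and $0\neq b(s)\in\kk[s]$, and then formally substituting $s\mapsto s-k$, one obtains the desired functional equation $b_f(s)f^s = \delta(s)f^{s+1}$ with $b_f(s)=b(s-k)\in\kk[s]$ and $\delta(s)=\delta'(s-k)\in D_{R/\kk}[s]$.
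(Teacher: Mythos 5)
The paper does not prove this theorem: it is quoted verbatim from the literature, with the proof deferred to the cited reference \cite[Theorem~3.26]{AMJNBSurvey} and to \cite{NB}, \cite{MNM}. Your blind reconstruction is in fact essentially the argument used there, so the ``comparison'' is really a correctness check, and your outline passes it: the PBW identification $\gr(D_{R/\kk})\cong\Sym_R(\Der_{R/\kk})$ in the differentiably admissible case, the reduction of Bernstein's inequality to the complete local case via the three admissibility axioms, the holonomicity of $R_{K,f}\cdot \fps$ over $K=\kk(s)$, the descending chain of cyclic submodules, and the clearing of denominators followed by the $s\mapsto s-k$ shift are precisely the steps in the standard proof. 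You are right to single out Bernstein's inequality as the crux; the paper does not need to supply these details because it imports the theorem wholesale. Two points in your sketch deserve a line each if you were to write this out: first, after completing at $\mm$ you want to compare $\kk$-linear and $\kappa(\mm)$-linear derivations, which works because $\kappa(\mm)/\kk$ is algebraic and we are in characteristic $0$, but this should be said explicitly; second, the assertion that $R_K$ is again differentiably admissible over $K$ is a genuine base-change statement (covering regularity, the rank of $\Der_{R_K/K}$, and equidimensionality at all maximal ideals of $R_K$) and should be cited or proved rather than asserted with ``verbatim.'' Neither is a gap in the argument, only in the exposition.
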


	By utilizing \autoref{thm_gen_prop_smooth_A}(i) and \cite[Proposition~2.10]{NB}, we obtain  $D_{R/\kk} = \Delta(R/\kk) = R \langle\Der_{R/\kk} \rangle$ when $R$ is a smooth algebra or a differentiably admissible algebra.
	This allows to easily define the following objects. 
	
	\begin{definition}
		Let $R$ be a smooth algebra or a differentiably admissible algebra over a field $\kk$ of characteristic $0$.  
		For an element $0 \neq f \in R$, we define $R_f[s] \fps$ as a free $R_f[s]$-module generated by the formal element $\fps$ and with a 
		$D_{R/\kk}[s]$-module structure determined as follows:
		for every derivation $\delta$ and $g \in R_f[s]$, we set
		\[
		\delta (g \fps) = \left (\delta (g) + \frac{sg \delta(f)}{f}  \right ) \fps.
		\]
		We define $D_{R/\kk}[s] \fps$ as the $D_{R/\kk}[s]$-submodule of 
		$R_f[s] \fps$ generated by $\fps$.
	\end{definition}
	
	These modules give a different interpretation of the Bernstein--Sato polynomial that was employed by Mebkhout and Narv{\' a}ez-Macarro \cite{MNM}.
	
	\begin{proposition}
		\label{prop: BS properties}
		Let $R$ be a smooth algebra over a field $\kk$ of characteristic $0$. 
		Then, for any $f \in R$, the following statements hold: 
		\begin{enumerate}[\rm (i)]
			\item The Bernstein--Sato polynomial $b_f(s)$ of $f$ exists.
			\item $b_f(s) = {\rm lcm}\big\lbrace b_f^{R_\mm}(s) \mid \mm \in \MaxSpec(R) \big\rbrace$, where $b_f^{R_\mm}(s)$ denotes the Bernstein--Sato polynomial over the localization at a maximal ideal $\mm \subset R$.
			\item For any field extension $\LL$ of $\kk$, $b_f(s)$ equals the Bernstein--Sato polynomial $b_f^{R\otimes_\kk \LL}(s)$ of $f \otimes 1 \in  R \otimes_\kk \LL$.
		\end{enumerate}
	\end{proposition}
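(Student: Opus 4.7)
The plan is to treat the three statements in order, using the differentiably admissible case (Theorem~\ref{thm: BS existence}) as a black box and working throughout with the $D_{R/\kk}[s]$-module
\[
N_R \coloneqq D_{R/\kk}[s]\,f^s \,\big/\, D_{R/\kk}[s]\,f^{s+1}.
\]
Existence of $b_f^R(s)$ amounts to the $\kk[s]$-annihilator of the class $[f^s] \in N_R$ being nonzero, in which case $b_f^R(s)$ is its monic generator.

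For (i), since $R$ is smooth over $\kk$ it is regular, and as every regular local ring is a domain, each maximal ideal of $R$ contains a unique minimal prime. Hence the minimal primes of $R$ are pairwise comaximal and $R \cong \prod_{i=1}^n R/\pp_i$ decomposes as a finite product of regular integral domains, each of which is an equidimensional smooth $\kk$-algebra and thus differentiably admissible by \autoref{obs_equidim_smooth}. Theorem~\ref{thm: BS existence} then furnishes functional equations $b_i(s)\,f_i^s = \delta_i(s)\,f_i^{s+1}$ in $D_{R_i/\kk}[s]$ for the components $f_i \in R_i$ of $f$. Using the product decomposition $D_{R/\kk} \cong \prod_i D_{R_i/\kk}$, setting $b_f^R(s) \coloneqq {\rm lcm}_i\,b_i(s)$ and gluing $\delta(s) \coloneqq \bigoplus_i (b_f^R/b_i)\,\delta_i(s)$ yields a global functional equation.

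For (ii), each localization $R_\mm$ is itself differentiably admissible --- smoothness of $R$ makes $\Der_{R_\mm/\kk}$ projective of the correct rank, and the Nullstellensatz makes $R/\mm$ algebraic over $\kk$ --- so $b_f^{R_\mm}(s)$ exists by Theorem~\ref{thm: BS existence}. Localizing any global functional equation shows $b_f^{R_\mm} \mid b_f^R$, so the collection $\{b_f^{R_\mm}\}_\mm$ consists of finitely many monic divisors of $b_f^R$ and its LCM $L(s)$ divides $b_f^R$. For the reverse divisibility, \autoref{rem_localization_diff} combined with flatness of $R \to R_\mm$ yields $N_R \otimes_R R_\mm \cong N_{R_\mm}$; if $L(s)\cdot [f^s] = 0$ in each $N_{R_\mm}$, the standard local-to-global principle for vanishing of elements of an $R$-module forces $L(s)\cdot [f^s] = 0$ in $N_R$, hence $b_f^R \mid L$.

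For (iii), set $R' \coloneqq R \otimes_\kk \LL$, a smooth $\LL$-algebra, so $b_f^{R'}(s)$ exists by (i). Lemma~\ref{lem_base_change_diff} gives $D_{R'/\LL} \cong D_{R/\kk} \otimes_\kk \LL$, hence $N_{R'} \cong N_R \otimes_\kk \LL$ as $\LL[s]$-modules. Fix a $\kk$-basis $\{e_\alpha\}$ of $\LL$; the decomposition $N_{R'} = \bigoplus_\alpha N_R\,e_\alpha$ of $\kk[s]$-modules shows that $p(s) = \sum_\alpha e_\alpha\,p_\alpha(s) \in \LL[s]$ annihilates $[f^s]_{N_{R'}}$ if and only if each $p_\alpha(s) \in \kk[s]$ annihilates $[f^s]_{N_R}$. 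Therefore $\Ann_{\LL[s]}([f^s]_{N_{R'}}) = (b_f^R)\LL[s]$, and comparing monic generators yields $b_f^{R'} = b_f^R$. The main obstacle is pinning down the two compatibility isomorphisms $N_R \otimes_R R_\mm \cong N_{R_\mm}$ and $N_R \otimes_\kk \LL \cong N_{R'}$ for the noncommutative module $N_R$; once these are verified by combining flatness with the base-change statements of \autoref{rem_localization_diff} and \autoref{lem_base_change_diff}, both (ii) and (iii) reduce to routine local-global and $\kk$-linear bookkeeping.
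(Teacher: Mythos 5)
Your proof is correct, and parts (ii) and (iii) are essentially identical to the paper's: both prove divisibility via the localization isomorphism $R_\mm \otimes_R N_R \cong N_{R_\mm}$ together with the local-to-global principle, and both establish (iii) by the $\kk$-basis decomposition $N_{R\otimes_\kk\LL} \cong \bigoplus_\alpha N_R\, e_\alpha$ coming from $D_{(R\otimes_\kk\LL)/\LL} \cong D_{R/\kk}\otimes_\kk\LL$.

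Where you deviate is in part (i), and the deviation is worth noting. You decompose $R$ globally as a finite product of connected components $R \cong \prod_i R_i$, each a regular domain of finite type over $\kk$ and hence automatically equidimensional, apply \autoref{thm: BS existence} to each factor, and splice the resulting functional equations together using $D_{R/\kk}\cong\prod_i D_{R_i/\kk}$. The paper instead invokes \autoref{rem_cover_smooth_etale} to produce a cover $\{R_{g_i}\}$ with each $R_{g_i}$ \'etale over a polynomial ring $P_i$; equidimensionality of $R_{g_i}$ is then deduced from the \'etale map (each domain factor of $R_{g_i}$ must have the same dimension as $P_i$), and the product $\prod_i b_f^{R_{g_i}}(s)$ is shown to annihilate $N_R$ by checking on the cover. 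Your route is slightly more self-contained here --- equidimensionality of a finite-type domain over a field is elementary, and you avoid the \'etale structure theorem entirely for this step --- and it produces an explicit global functional equation rather than just an annihilation statement. The paper's route leans on \autoref{rem_cover_smooth_etale}, which it needs elsewhere in the section anyway, so the overhead is amortized. Do add a line justifying $D_{R/\kk}\cong\prod_i D_{R_i/\kk}$ (it follows from $P^m_{R/\kk}\cong\prod_i P^m_{R_i/\kk}$, since the diagonal ideal in $R\otimes_\kk R \cong \prod_{i,j} R_i\otimes_\kk R_j$ contains all the off-diagonal idempotents), and note that a component with $f_i=0$ contributes trivially and can be discarded.
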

	\begin{proof}
		First, notice
		that the Bernstein--Sato polynomial $b_f(s)$ of $f$, when it exists, 
		is the minimal polynomial of the action of $s$ on the $D_{R/\kk}[s]$-module 
		\[
		(D_{R/\kk}[s] \fps)/(D_{R/\kk}[s] f\fps).
		\] 
		
		(i) By \autoref{rem_cover_smooth_etale}, we can find elements $g_1,\ldots,g_c \in R$ generating the unit ideal and such that $P_i \rightarrow R_{g_i}$ is an \'etale ring map and $P_i$ is a polynomial ring over $\kk$.
		Since $R_{g_i}$ is regular, it is a finite product of regular domains $R_{g_i} = \prod_j R_{i,j}$. 
		Each domain $R_{i,j}$ is \'etale over $P_i$ (see \cite[\href{https://stacks.math.columbia.edu/tag/00U2}{Tag 00U2}]{stacks-project}), and thus should have the same dimension as $P_i$.
		This shows that $R_{g_i}$ is equidimensional.
		Then \autoref{obs_equidim_smooth} and \autoref{thm: BS existence} yield the Bernstein--Sato polynomial $b_{f}^{R_{g_i}}(s)$ and the vanishing
		$$
		 R_{g_i} \otimes_R  \left(b_{f}^{R_{g_i}}(s) \cdot
		(D_{R/\kk}[s] \fps)/(D_{R/\kk}[s] f\fps) \right) \,\cong\, b_{f}^{R_{g_i}}(s) \cdot (D_{R_{g_i}/\kk}[s] \fps)/(D_{R_{g_i}/\kk}[s] f\fps) \,=\, 0.
		$$
		As a consequence, we get  
		$$
		b_{f}^{R_{g_1}}(s) \cdots b_{f}^{R_{g_c}}(s) \,\cdot \, \big(D_{R/\kk}[s] \fps\big)/\big(D_{R/\kk}[s] f\fps\big) = 0,
		$$
		and this implies the existence of the Bernstein--Sato polynomial $b_f(s)$ of $f$.
		
		(ii)
		For any $\mm \in \MaxSpec(R)$, the isomorphism 
		\[
		R_\mm \otimes_R
		\big(D_{R/\kk}[s] \fps\big)/\big(D_{R/\kk}[s] f\fps\big) \,\cong\, \big(D_{R_\mm/\kk}[s] \fps\big)/\big(D_{R_\mm/\kk}[s] f\fps\big)
		\]
		shows that the local Bernstein--Sato polynomial divides the global one.  
		The assertion follows as we run through all the maximal ideals of $R$.
		
		(iii)
		Let $T = R \otimes_\kk \LL$.
		Let $\{e_i\}_{i \in I}$ be a basis of $\LL$ as a $\kk$-vector space.
		The inclusion $\kk \hookrightarrow \LL$ yields the isomorphisms 
		$$
		\frac{D_{T/\LL}[s] \fps}{D_{T/\LL}[s] (f \otimes 1)\fps} \;\cong \;
 		\LL \otimes_{\kk}  \frac{D_{R/\kk}[s] \fps}{D_{R/\kk}[s] f\fps}  \;\cong\;  \bigoplus_{i \in I} \frac{D_{R/\kk}[s] \fps}{D_{R/\kk}[s] f\fps} \, e_i.
		$$
		Hence $b_{f}^T(s) \subset \LL[s]$ divides $b_{f}(s) \subset \kk[s] \subset \LL[s]$.
		 We can write $b_{f}^T(s) = \sum b_i (s) e_i$ where $b_i(s) \in \kk[s]$ and only finitely many of them are not $0$. 		
		We obtain that $b_i(s) \cdot \big(D_{R/\kk}[s] \fps\big)/\big(D_{R/\kk}[s] f\fps\big) = 0$, and so it follows that $b_f(s)$ divides $b_i(s)$.
		This implies that $b_f(s)$ divides $b_{f}^T(s)$.
	\end{proof}

	Having proved the existence of Bernstein--Sato polynomials for a smooth algebra over a field of characteristic zero, we now present the following rationality result.
	
	\begin{theorem}[Kashiwara \cite{KASHIWARA}, Malgrange \cite{MALGRANGE}]\label{thm: BS rationality}
		Let $R$ be a smooth algebra over a field $\kk$ of characteristic $0$. 
		Then, for any $f \in R$, the Bernstein--Sato polynomial  $b_f(s)$ factors over $\QQ$.
	\end{theorem}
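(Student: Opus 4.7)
\smallskip
The plan is to combine Kashiwara's local rationality theorem for analytic Bernstein--Sato polynomials with the local-to-global and base-change reductions already packaged in \autoref{prop: BS properties}. By part~(ii) of that proposition, it suffices to prove that $b_f^{R_\mm}(s)$ has rational roots for each maximal ideal $\mm \subset R$. Fix such an $\mm$; smoothness of $R$ over $\kk$ makes $R_\mm$ a regular local $\kk$-algebra of some dimension $n$, with residue field finite algebraic over $\kk$. Using part~(iii) I may enlarge $\kk$ without changing the Bernstein--Sato polynomial and pick a maximal ideal lying over $\mm$, reducing to the situation where the residue field is an algebraically closed field of characteristic zero; taking this field to be $\CC$ is convenient for the final step.

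Next, I compare the algebraic local polynomial $b_f^{R_\mm}(s)$ with the Bernstein--Sato polynomial computed in the completion $\widehat{R_\mm}$, which by Cohen's structure theorem is isomorphic to $\CC[[x_1,\ldots,x_n]]$. This comparison is the heart of the Mebkhout--Narv\'aez-Macarro approach \cite{MNM}: a faithfully flat descent argument on the $D[s]$-module $D[s]\fps / D[s]f\fps$ yields that the algebraic local polynomial coincides with the formal one (once existence is known on both sides, which follows from \autoref{prop: BS properties}(i) on the algebraic side and from the classical theory on the formal side). One further faithful descent via $\CC\{x_1,\ldots,x_n\} \hookrightarrow \CC[[x_1,\ldots,x_n]]$ identifies the formal polynomial with the analytic one, and Kashiwara's theorem \cite{KASHIWARA} then supplies the rationality of the roots.

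The principal obstacle is the transition between $R_\mm$ and $\widehat{R_\mm}$: the ring $D_{\widehat{R_\mm}/\CC}$ is strictly larger than $\widehat{R_\mm} \otimes_{R_\mm} D_{R_\mm/\CC}$, because the completion admits many continuous derivations that do not arise from the local ring, so the functional equation cannot simply be tensored across. This is exactly what the Mebkhout--Narv\'aez-Macarro machinery resolves, by working directly with the quotient $D[s]\fps / D[s]f\fps$ and showing that its annihilator in $\CC[s]$ (generated by the Bernstein--Sato polynomial) is preserved under the passage to completion. With that comparison in hand, the remaining ingredients---the local-global reduction and the base-change invariance---follow immediately from \autoref{prop: BS properties}, and Kashiwara's theorem furnishes the final rationality statement.
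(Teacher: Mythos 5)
The essential missing ingredient is the reduction to a field that actually embeds into $\CC$. You propose to ``enlarge $\kk$ \dots\ taking this field to be $\CC$,'' but an arbitrary field of characteristic zero need not embed into $\CC$: its transcendence degree over $\QQ$ may exceed the continuum, and enlarging $\kk$ only makes this worse. Since Kashiwara's theorem is proved over $\CC$, you must first get there by \emph{descending}, not enlarging. The paper's proof does exactly this via spreading out: by \cite[\href{https://stacks.math.columbia.edu/tag/00TP}{Tag 00TP}]{stacks-project} one writes $R \cong R_0 \otimes_{\bFF} \kk$ with $f \in R_0$, $\bFF$ a finitely generated extension of $\QQ$, and $R_0$ smooth over $\bFF$; such an $\bFF$ embeds into $\CC$, and \autoref{prop: BS properties}(iii) applied along the two extensions $\bFF \hookrightarrow \kk$ and $\bFF \hookrightarrow \CC$ transfers $b_f$ into a smooth $\CC$-algebra, after which \autoref{prop: BS properties}(ii) reduces to local polynomials over $\CC$. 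Without this step your argument does not cover the general case. Note also that \autoref{prop: BS properties}(iii) is stated for the \emph{global} Bernstein--Sato polynomial, so your order ``apply (ii), then apply (iii)'' should be reversed to ``apply (iii), then apply (ii)'', as the paper does; (iii) says nothing directly about $b_f^{R_\mm}$ versus $b_f^{(R\otimes_\kk\LL)_{\mm'}}$ for $\mm'$ lying over $\mm$.

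Beyond that gap, your route is otherwise a legitimate, and in one respect more careful, alternative: you pass explicitly through $\widehat{R_\mm}\cong\CC[[x_1,\ldots,x_n]]$ and $\CC\{x_1,\ldots,x_n\}$, whereas the paper invokes Kashiwara's theorem for the algebraic local ring $S_\mm$ directly, silently absorbing the algebraic-to-analytic comparison. You are right that this comparison is nontrivial (continuous derivations of the completion do not all extend from $R_\mm$), and that the real content is the invariance of the $\CC[s]$-annihilator of $D[s]\fps/D[s]f\fps$ under the faithfully flat base changes $R_\mm \to \widehat{R_\mm}$ and $\CC\{x\}\to\CC[[x]]$. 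Importing that step from \cite{MNM} is acceptable, but be clear that it is not a consequence of \autoref{prop: BS properties} and genuinely lies outside what this paper develops.
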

	\begin{proof}
		By applying \cite[\href{https://stacks.math.columbia.edu/tag/00TP}{Tag 00TP}]{stacks-project}, we can find a finitely generated field extension $\bFF$ of $\QQ$ and a smooth $\bFF$-algebra $R_0$ such that 
		$R \cong R_0 \otimes_{\bFF} \kk$. 
		Moreover, we may also assume that $f \in R_0$.
		We embed $\bFF$ into $\CC$ and consider $S \coloneqq R_0 \otimes_\bFF \CC$.
		Due to  the \autoref{prop: BS properties}(iii),  the Bernstein--Sato polynomial of $f$ in all three rings is the same one. 
		Thus, we reduced the problem to a smooth algebra $S$ over $\CC$. 
		The result of Kashiwara \cite{KASHIWARA} shows that the \emph{local} Bernstein--Sato polynomial (that is, the Bernstein--Sato polynomial for $f$ in the localization $S_\mm$ with $\mm \in \MaxSpec(S)$) can be factored completely in $\QQ[s]$. 
		The assertion now follows from \autoref{prop: BS properties}(ii).
	\end{proof}

  \section{Applications to local cohomology}
  \label{sect_local_cohom}
  	
  	In this section, we prove the following theorem regarding the generic freeness of local cohomology modules.

    	\begin{theorem}
    	\label{thm_local_cohom}
    	Let $A$ be Noetherian domain containing a field $\kk$ and $R$ be a smooth $A$-algebra. 
    	Suppose one of the following two conditions: 
    	\begin{enumerate}[\rm (a)]
    		\item $\kk$ is a field of characteristic zero, or
    		\item $\kk$ is a field of positive characteristic and the regular locus ${\rm Reg}(A) \subset \Spec(A)$ contains a nonempty open subset.
    	\end{enumerate}
    	Then, for any ideal $I \subset R$, there is a nonzero element $a \in A$ such that $\HH_I^i(R) \otimes_A A_a$ is a free $A_a$-module for all $i \ge 0$. 
    \end{theorem}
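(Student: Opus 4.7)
The strategy splits along the two characteristic hypotheses but follows a single pattern: first show that, after inverting a suitable $0 \neq a \in A$, the local cohomology module $\HH_I^i(R) \otimes_A A_a$ is finitely generated as a left module over a Noetherian ring of operators (the ring of differential operators in characteristic zero, a suitable ring encoding the $F$-module structure in positive characteristic), and then invoke a general generic freeness theorem for modules over that ring to conclude $A_a$-freeness.

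\emph{Characteristic zero case.} Choose generators $f_1, \ldots, f_s$ of $I$ and represent $\HH_I^i(R)$ as the $i$-th cohomology of the \v{C}ech complex $\check{C}^\bullet(f_1,\ldots,f_s;R)$. Its finitely many terms are the localizations $R_{f_{j_1}\cdots f_{j_k}}$, each of which carries a natural left $D_{R/A}$-module structure by \autoref{rem_localization_diff}. Apply \autoref{thm_gen_prop_smooth_A}(iii) termwise and take the product of the resulting finitely many elements of $A$ to obtain a single $0 \neq a \in A$ such that every term of the \v{C}ech complex becomes finitely generated as a module over $D_{R/A} \otimes_A A_a$. The ring $D_{R/A}$ is Noetherian by \autoref{thm_gen_prop_smooth_A}(i), and $D_{R/A} \otimes_A A_a$ remains Noetherian by \autoref{thm_gen_prop_smooth_A}(ii); hence the kernels and cokernels within the localized \v{C}ech complex are again finitely generated, so each $\HH_I^i(R) \otimes_A A_a$ is finitely generated as a left $D_{R/A} \otimes_A A_a$-module. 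Since only the finitely many indices $0 \le i \le s$ matter, applying \autoref{cor_gen_free_D_R/A} to each nonzero $\HH_I^i(R) \otimes_A A_a$ produces a further nonzero element of $A$ whose inversion renders all of the $\HH_I^i(R) \otimes_A A_a$ simultaneously free over $A_a$.

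\emph{Positive characteristic case.} Hypothesis (b) lets us first invert an element of $A$ so that $A_a$ is regular; then $R \otimes_A A_a$, being smooth over a regular ring, is itself regular, so by Kunz's theorem the Frobenius on $R \otimes_A A_a$ is flat. In this setting Lyubeznik's theory of $F$-modules applies: each $\HH_I^i(R \otimes_A A_a)$ is an $F$-finite module, presented as the colimit of a Frobenius pullback sequence arising from a generating morphism $\beta \colon M \to F(M)$ where $M$ is a finitely generated $(R \otimes_A A_a)$-module. Apply Grothendieck's generic freeness (\autoref{cor: gen free}) to $M$ to further invert an element and assume $M$ is $A_a$-free. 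Because $A_a$ is regular, Frobenius pullback on $R \otimes_A A_a$ preserves $A_a$-freeness at each finite stage $F^e(M)$; a careful bookkeeping of the transition maps in the colimit that defines the $F$-module, combined with Noetherianity of the underlying structures, is then used to transport freeness from the stages to $\HH_I^i(R) \otimes_A A_a$ itself.

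\emph{Expected obstacle.} The characteristic zero case is conceptually streamlined, since all the heavy lifting has been assembled into \autoref{thm_gen_prop_smooth_A} and \autoref{cor_gen_free_D_R/A}. The delicate point is the last step in positive characteristic: because the transition maps $F^e(M) \to F^{e+1}(M)$ twist the $A_a$-module action by Frobenius, a direct limit of $A_a$-free modules under these maps is a priori only $A_a$-flat, and one must exploit the regularity of $A_a$ together with the special structure of the generating morphism $\beta$ to upgrade flatness to freeness. This is most naturally addressed either through a filtration whose graded pieces lie in the scope of \autoref{cor: gen free}, or by combining the argument with the finiteness of associated primes results alluded to in the introduction so that passage to the colimit preserves the $A_a$-free structure.
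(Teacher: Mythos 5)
Your characteristic zero argument is correct and is essentially the paper's proof: represent $\HH_I^i(R)$ as \v{C}ech cohomology, use \autoref{thm_gen_prop_smooth_A}(iii) to make every term a finitely generated left $D_{R/A}$-module after inverting a single $a$, exploit Noetherianity of $D_{R/A}$ (from \autoref{thm_gen_prop_smooth_A}(i)) to conclude each cohomology module is finitely generated over $D_{R/A} \otimes_A A_a$, and finish with \autoref{cor_gen_free_D_R/A}.

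The positive characteristic case has a genuine gap, and you have also misidentified the obstacle. You worry that the transition maps $F^e(\beta) \colon F^e(M) \to F^{e+1}(M)$ ``twist the $A_a$-module action by Frobenius'' and so yield at best $A_a$-flatness of the colimit. But a root, by definition, is an $R$-module $M$ with an \emph{$R$-linear} map $\beta \colon M \to F(M)$; the transition maps $F^e(\beta)$ are images of $\beta$ under the endofunctor $F^e$ of $R$-modules, and are therefore honest $R$-linear (hence $A_a$-linear) maps---no semi-linearity arises. The real difficulty, which your proposal leaves entirely implicit under ``careful bookkeeping,'' is to produce a \emph{single} element $a \in A$ that simultaneously makes all stages of the colimit well-behaved. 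Inverting once to make $M$ free over $A_a$ (which \autoref{cor: gen free} gives you) does not by itself control $F^e(M)$ for $e \ge 1$: if one presents $Q = \mathrm{coker}(\beta)$ as $F/N$, then $F^e(Q) \cong F/N^{[p^e]}$, and $A_a$-freeness of $F/N$ does not propagate to $F/N^{[p^e]}$ simply because $A_a$ is regular. The paper's argument at this point relies on \autoref{cor_Grob_Frob_pow}, a Gr\"obner-basis statement showing that one localization element $a$ renders $F/N^{[p^e]} \otimes_A A_a$ free for \emph{all} $e \ge 0$ at once; this, fed into the short exact sequences
\[
0 \to F^e(M) \to F^{e+1}(M) \to F/N^{[p^e]} \to 0
\]
(valid because $F_R$ is exact by Kunz once $R$ is regular), gives splitting and freeness of each $F^e(M)$ by induction, whence the colimit is free. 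Your proposal never supplies this uniform-in-$e$ ingredient, and neither regularity of $A_a$ nor finiteness of associated primes substitutes for it; that is the step you would need to fill in.
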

    \begin{proof}
		The result follows from \autoref{thm_gen_free_LC_zero} and  \autoref{thm_gen_free_LC_pos} below.
    \end{proof}

	As shown by the following example of Katzman \cite{KATZAMAN}, one cannot hope for a generic freeness result of local cohomology that does not involve additional assumptions.

		\begin{remark}
			\label{rem_Katzman}
		Let $\kk$ be a field and $S$ be the  $\kk$-algebra
		$$
		S = \frac{\kk[s,t,x,y,u,v]}{\left(sx^2v^2 - (t+s)xyuv+ty^2u^2\right)}.
		$$
		Consider $A=\kk[s,t]$ as the coefficient ring.
		In \cite[Example 4.6]{GEN_FREENESS_LOC_COHOM} it was proved that, {\it one cannot find an element $0\neq a \in A$ such that} 
		$$
		\HH_{(u,v)}^2(S) \otimes_{A} A_a
		$$ 
		is a free $A_a$-module.
	\end{remark}

  \subsection{Characteristic zero} 
  
  Here we prove the generic freeness of local cohomology modules in a characteristic zero setting. 
  The proof follows straightforwardly from the techniques and results we developed in \autoref{sect_diff_ops_smooth}.

   \begin{theorem}
	\label{thm_gen_free_LC_zero}
   Let $A$ be a Noetherian domain  containing the field of rational numbers $\QQ$ and  $R$ be a  smooth $A$-algebra. 
	Then, for any ideal $I \subset R$ and any $i \ge 0$, there exists a nonzero element $a \in A$ such that $\HH_I^i(R) \otimes_{A} A_a$ is a free $A_a$-module.
   \end{theorem}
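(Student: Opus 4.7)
The plan is to realize $H^i_I(R) \otimes_A A_a$ as a finitely generated left $D_{R/A} \otimes_A A_a$-module for a suitable nonzero $a \in A$, and then invoke \autoref{cor_gen_free_D_R/A} to conclude $A_a$-freeness (possibly after a further localization in $A$). The link between local cohomology and differential operators is the classical fact that every localization $R_f$ carries a canonical left $D_{R/A}$-module structure extending that of $R$ (see \autoref{rem_localization_diff}), and that the natural restriction maps $R_{f_J} \to R_{f_{J'}}$ between such localizations are $D_{R/A}$-linear.

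The concrete steps I would carry out are as follows. First, fix a finite generating set $f_1, \ldots, f_s$ of $I$ and compute $H^i_I(R)$ as the cohomology of the \v{C}ech complex $\check{C}^\bullet(f_1,\ldots,f_s;R)$, whose terms are finite direct sums of localizations $R_{f_J} = R_{\prod_{j \in J} f_j}$. Second, apply \autoref{thm_gen_prop_smooth_A}(iii) to each of the finitely many elements $\prod_{j \in J} f_j \in R$ and choose a single nonzero $a \in A$ that works simultaneously; after inverting $a$, each $R_{f_J} \otimes_A A_a$ is a finitely generated left module over $D_{R_a/A_a} \cong D_{R/A} \otimes_A A_a$, where the base change isomorphism is provided by \autoref{lem_base_change_diff}. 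Third, observe that the \v{C}ech differentials (alternating sums of localization maps) are $D_{R/A}$-linear, so the entire tensored complex $\check{C}^\bullet \otimes_A A_a$ is a complex of finitely generated left $D_{R/A} \otimes_A A_a$-modules. Since $D_{R/A} \otimes_A A_a$ is Noetherian by \autoref{thm_gen_prop_smooth_A}(i) (combined with \autoref{lem_base_change_diff}), the cohomology modules
\[
H^i_I(R) \otimes_A A_a \;\cong\; H^i\bigl(\check{C}^\bullet(f_1,\ldots,f_s;R) \otimes_A A_a\bigr)
\]
are finitely generated left modules over $D_{R/A} \otimes_A A_a$ for every $i \ge 0$.

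Finally, I apply \autoref{cor_gen_free_D_R/A} to the smooth $A_a$-algebra $R \otimes_A A_a$ and to each of the finitely many modules $H^i_I(R) \otimes_A A_a$ for $0 \le i \le s$ (the higher cohomologies vanish since the \v{C}ech complex has length $s$). This yields a further nonzero $a' \in A_a$, which we may lift to a nonzero element of $A$, such that each $H^i_I(R) \otimes_A A_{aa'}$ is free over $A_{aa'}$. Replacing $a$ by $aa'$ gives the desired element.

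The main obstacle I anticipate is not the argument itself but the bookkeeping around making sure the $D_{R/A}$-structure is preserved under localization in $A$ and commutes with taking cohomology; this hinges on \autoref{lem_base_change_diff} providing the base change isomorphism $D_{R/A} \otimes_A A_a \cong D_{R \otimes_A A_a/A_a}$ (which requires the finite generation of the principal parts $P^m_{R/A}$, a consequence of $R$ being smooth and hence essentially of finite type over $A$) together with the exactness of $- \otimes_A A_a$. Once these technical compatibilities are in place, the proof reduces cleanly to \autoref{thm_gen_prop_smooth_A}(iii) and \autoref{cor_gen_free_D_R/A}, with no further input needed.
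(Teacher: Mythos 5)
Your proposal is correct and follows essentially the same route as the paper: pass to the \v{C}ech complex on generators of $I$, use \autoref{thm_gen_prop_smooth_A}(iii) to make its terms finitely generated left $D_{R/A}$-modules after inverting a single $a\in A$, then invoke \autoref{cor_gen_free_D_R/A} on the cohomology. The only difference is that you spell out the supporting bookkeeping (base change via \autoref{lem_base_change_diff}, $D$-linearity of the \v{C}ech maps, Noetherianity of $D_{R/A}\otimes_A A_a$) that the paper's proof leaves implicit.
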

   \begin{proof}
   	Let $D = D_{R/A}$.
   	Fix a set of generators $f_1,\ldots,f_n$ of $I$.
   	Notice that the \v{C}ech complex 
   	$$
   	{C}^\bullet \colon \quad 0 \rightarrow R \rightarrow \bigoplus_{i} R_{f_i} \rightarrow \bigoplus_{i < j} R_{f_if_j} \rightarrow \cdots \rightarrow R_{f_1\cdots f_n} \rightarrow 0
   	$$
   	is naturally a complex of left $D$-modules (see \autoref{rem_localization_diff}).
   	Due to \autoref{thm_gen_prop_smooth_A}(iii), after localizing at a suitable nonzero element in $A$, we may assume that $C^\bullet$ is a complex of finitely generated left $D$-modules.
	Therefore, \autoref{cor_gen_free_D_R/A} implies that $\HH_I^i(R) \cong \HH^i(C^\bullet)$ becomes $A$-free after localizing at another suitable nonzero element in $A$.
   \end{proof}

  \subsection{Positive characteristic}
  \label{subsect_pos_char}
  In positive characteristic, Lyubeznik introduced in \cite{Lyubeznik_FMOD} the theory of $F$-modules in order to show finiteness of associated primes of local cohomology. 
  We use the following setup throughout this subsection.
  
  \begin{setup}
  	\label{setup_pos_char}
  	Let $\kk$ be a field of characteristic $p > 0$ and $A$ be a Noetherian domain containing $\kk$.
  	Let $R$ be a smooth algebra over $A$.
	Define $R'$ to be an $R$-bimodule with usual structure on the left and the Frobenius endomorphism on the right: $r'r = r^pr'$ for all $r \in R$ and $r' \in R'$. 
  The Peskine--Szpiro functor is $F_R (M)\coloneqq R' \otimes_R M$. 
  \end{setup}
  
For example, $F_R(R) = R$ and $F_R(R/I) = R/{I^{[p]}}$. 
In general, if $M$ has a presentation $F_2 \xrightarrow{\phi} F_1 \to M \to 0$, then $F_2 \xrightarrow{\phi^{[p]}} F_1 \to F_R(M) \to 0$
where $\phi^{[p]}$ is obtained by taking all entries of the matrix $\phi$ to $p$th power. 

\begin{remark}
	\label{rem_regular_extension}
		Notice that, if we assume that $A$ is regular, then it follows that $R$ is also regular.
		Let $P \in \Spec(R)$ and $\pp = P \cap A \in \Spec(A)$.
		Since $A \rightarrow R$ is smooth, the local homomorphism $A_\pp \rightarrow R_P$ is flat and the fiber $R_P \otimes_{{A}_\pp}\kappa(\pp)$ is a regular ring.
		Hence \cite[Theorem 23.7]{MATSUMURA} implies that $R_P$ is a regular ring.
\end{remark}

Our main tool in a positive characteristic setting is the following result.
  
  \begin{theorem}[Lyubeznik, {\cite[Propositions 2.3, 2.10] {Lyubeznik_FMOD}}]
  	\label{thm_Lyub_roots}
  Suppose that $R$ is a regular ring, and let $I \subset R$ be an ideal. 
  Then $\lc_I^i (R)$ has a \emph{root}, i.e., a finitely generated $R$-module $M$ with an injective homomorphism $M \to F_R(M)$ such that 
  \[
	  \lc_I^i (R) \;=\; \lim_\to \left \{M \to F_R(M) \to F^2_R(M) \to \cdots \right \}.
  \]
  \end{theorem}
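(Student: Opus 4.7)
The plan is to build the root of $\HH_I^i(R)$ by carrying the construction through the \v{C}ech complex, which already computes local cohomology and in which every term is easily seen to be a direct limit of Frobenius iterates starting from a finitely generated root. Fix generators $f_1,\ldots,f_n$ of $I$ and the associated \v{C}ech complex $C^\bullet$ whose terms are direct sums of localizations $R_g$, with $g$ a product of the $f_i$'s.

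The first step is to exhibit a canonical root for each $R_g$. The natural candidate is $M_g := R$ equipped with the map $\beta_g \colon R \to F_R(R) = R$ given by multiplication by $g^{p-1}$, which is injective since $R$ is a domain localizable at $g$ (or more generally because the transition maps become isomorphisms after inverting $g$). A direct computation shows that the composition of the first $n$ iterated maps $R \xrightarrow{g^{p-1}} F_R(R) \xrightarrow{F_R(g^{p-1})} F_R^2(R) \to \cdots \to F_R^n(R)$ identifies, under $F_R^k(R) \cong R$, with multiplication by $g^{p^n - 1}$; therefore the direct limit $\varinjlim F_R^k(R)$ recovers $R_g$ as a module, and gives it its intrinsic $F_R$-module structure (using that Frobenius is flat on the regular ring $R$ by \autoref{rem_regular_extension} and Kunz).

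Next I would check that the \v{C}ech differentials are morphisms of $F_R$-modules compatible with the chosen roots, up to replacing the root of $R_{g_1}$ by $R \xrightarrow{g_2^{p-1} g_1^{p-1}} R$ whenever one localizes further from $g_1$ to $g_1 g_2$; the compatibility is just the identity $(g_1 g_2)^{p-1} = g_1^{p-1} g_2^{p-1}$ combined with the functoriality of $F_R$. Consequently $C^\bullet$ upgrades to a complex in the category of $F_R$-finite $F_R$-modules, and the cohomology of each differential gives $\HH_I^i(R)$.

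The main obstacle, and the heart of the argument, is showing that $F_R$-finite $F_R$-modules form an abelian subcategory of $F_R$-modules, i.e., that kernels and cokernels of maps between $F_R$-modules admitting roots again admit roots with an \emph{injective} structure map. Given an $F_R$-morphism $\varphi \colon \mathcal{M} \to \mathcal{N}$ with roots $M \to F_R(M)$ and $N \to F_R(N)$, one descends $\varphi$ to a map $\varphi_0 \colon M \to F_R^k(N)$ for some $k$, replaces $N$ by the root $F_R^k(N) \to F_R^{k+1}(N)$ (a root of the same $F_R$-module), and then takes $\ker(\varphi_0)$ and $\mathrm{coker}(\varphi_0)$ as candidate roots. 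Exactness of $F_R$ on the regular ring $R$ ensures that the induced structure maps $\ker(\varphi_0) \to F_R(\ker(\varphi_0))$ and $\mathrm{coker}(\varphi_0) \to F_R(\mathrm{coker}(\varphi_0))$ exist; injectivity of the first is inherited from injectivity of the root of $M$, while injectivity of the second requires replacing $\mathrm{coker}(\varphi_0)$ by the image of a sufficiently high Frobenius iterate inside its limit, a standard ``saturation'' trick that eliminates the kernel of the structure map. Applying this to the differentials of $C^\bullet$ yields roots for $\ker$ and $\mathrm{im}$ at each spot, hence a root for $\HH_I^i(R)$, which produces the desired expression as a direct limit $M \to F_R(M) \to F_R^2(M) \to \cdots$.
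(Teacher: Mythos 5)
The paper does not actually prove \autoref{thm_Lyub_roots}; the statement is cited as a black box from Lyubeznik's $F$-modules paper (Propositions~2.3 and~2.10), and this citation is all the paper offers. Your proposal reconstructs Lyubeznik's original argument more or less faithfully: realize the \v{C}ech complex as a complex of $F$-finite $F$-modules, with each term $R_g$ generated by the morphism $g^{p-1}\colon R\to F_R(R)\cong R$, note that the differentials are $F_R$-equivariant, and invoke that when $R$ is regular the $F$-finite $F$-modules form an abelian subcategory closed under subquotients so that each cohomology group again acquires a finitely generated root. This is indeed the intended mechanism.

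Two caveats are worth flagging. First, a regular ring need not be a domain, so $g^{p-1}\colon R\to R$ is not automatically injective; one should phrase each \v{C}ech term as carrying a finitely generated \emph{generating morphism} (no injectivity required), and only pass to a genuine root with injective structure map at the very end via the saturation trick you describe. Second, the heavy lifting of the argument is precisely the step you summarize as ``a standard saturation trick'': showing that kernels and cokernels of morphisms between $F$-finite $F$-modules over a regular ring admit finitely generated roots with \emph{injective} structure maps is exactly the content of Lyubeznik's cited Propositions~2.3 and~2.10, and one needs exactness of $F_R$ (Kunz) plus an argument that the chain $M\supseteq \ker(M\to F_R^e M)$ stabilizes to make the saturation yield an injective structure map on a still finitely generated module. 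You gesture at all the right ingredients; just be aware that this step is where essentially all the work lives.
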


	By utilizing the above theorem and \autoref{cor_Grob_Frob_pow}, we obtain the following generic freeness result for local cohomology modules.

\begin{corollary}
	\label{cor_gen_free_LC_pos}
Suppose that $R$ is a regular ring, and let $I \subset R$ be an ideal. 
Then there exists a nonzero element $a \in A$ such that $\lc_I^i (R) \otimes_A A_a$ is a free $A_a$-module. 
\end{corollary}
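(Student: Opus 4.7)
By \autoref{thm_Lyub_roots}, the regularity of $R$ yields a \emph{root} of $\HH_I^i(R)$: a finitely generated $R$-module $M$ equipped with an injective structure map $M \hookrightarrow F_R(M)$ such that $\HH_I^i(R) = \varinjlim_e F_R^e(M)$ under the induced transition maps. Thus it suffices to produce a single nonzero $a \in A$ for which every $F_R^e(M) \otimes_A A_a$ is $A_a$-free, in a way compatible with the transition maps so that the colimit is also $A_a$-free.

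To bring in Gr\"obner bases, write $R \cong P/J$ with $P = A[x_1, \ldots, x_r]$ a polynomial ring, and present $M \cong F/N$, where $F$ is a finite-rank free $P$-module and $N \supseteq JF$ is a finitely generated $P$-submodule. Choose generators of $N$ obtained by combining lifts $w_1, \ldots, w_m$ of generators of $N/JF$ with a generating set of $JF$. Unwinding the Peskine--Szpiro functor identifies
\[
F_R^e(M) \;\cong\; F\bigm/\bigl((w_1^{[p^e]}, \ldots, w_m^{[p^e]}) + JF\bigr)
\]
as $P$-modules, where $(-)^{[p^e]}$ denotes the componentwise $p^e$-th Frobenius power. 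Applying \autoref{cor_Grob_Frob_pow} to $N \subset F$ now produces a nonzero $a \in A$ such that $F/N^{[p^e]} \otimes_A A_a$ is $A_a$-free for every $e \ge 0$, with an explicit monomial basis coming from $\iniTerm(N)^{[p^e]}$.

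The main obstacle is to pass from this to the $A_a$-freeness of the smaller quotient $F_R^e(M) \otimes_A A_a = F/((w_1^{[p^e]}, \ldots, w_m^{[p^e]}) + JF) \otimes_A A_a$, in which only the $w_i$'s are Frobenius-powered while $JF$ is left unchanged. The plan is to select the Gr\"obner basis of $N$ so that it extends one of $JF$, and then adapt the Buchberger-criterion argument from the proof of \autoref{cor_Grob_Frob_pow} to the mixed set $\{w_1^{[p^e]}, \ldots, w_m^{[p^e]}\} \cup \{\text{unchanged } JF\text{-generators}\}$: Frobenius--Frobenius syzygies are handled exactly as in that corollary, $JF$--$JF$ syzygies reduce by the chosen Gr\"obner basis of $JF$, and the delicate mixed syzygies between $\iniTerm(w_i)^{[p^e]}$ and unchanged $JF$-initial terms reduce using the containment $\iniTerm(g_j)^{p^e} \in \iniTerm(JF)$ together with reductions already available in the original Gr\"obner basis of $N$. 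Generic Macaulay (\autoref{thm_gen_grob}(i)) then furnishes each $F_R^e(M) \otimes_A A_a$ with an explicit monomial $A_a$-basis; tracking the transition maps shows that these bases can be arranged to nest, and taking their union gives the desired $A_a$-basis of $\HH_I^i(R) \otimes_A A_a$.
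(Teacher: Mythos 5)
Your proposal correctly begins with \autoref{thm_Lyub_roots} to extract a root $M \hookrightarrow F_R(M)$, and it perceptively identifies a real structural obstacle: after presenting $M$ over a polynomial ring $P$ with $R = P/J$, the Peskine--Szpiro functor Frobenius-powers the lifted generators $w_i$ but leaves $JF$ unchanged, so $F_R^e(M)$ is \emph{not} a quotient by a Frobenius power of a single submodule of $F$, which is what \autoref{cor_Grob_Frob_pow} handles. The paper avoids this obstacle with a strategic move you do not make: it applies $F_R^e$ not to $M$ but to the \emph{cokernel} $Q$ of the root map, i.e.\ it starts from the short exact sequence $0 \to M \to F_R(M) \to Q \to 0$, writes $Q = F/N$, and uses exactness of $F_R$ (Kunz) to produce $0 \to F_R^e(M) \to F_R^{e+1}(M) \to F_R^e(Q) \to 0$ with $F_R^e(Q) \cong F/N^{[p^e]}$ a genuine Frobenius power of the \emph{whole} submodule $N$. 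That is exactly the setting of \autoref{cor_Grob_Frob_pow}. Your alternative --- a Buchberger-criterion adaptation to the mixed generating set $\{w_i^{[p^e]}\} \cup \{JF\text{-generators}\}$ --- is only sketched, and the mixed syzygies you yourself flag are genuinely delicate; the paper's choice of $Q$ means one never has to carry that argument out.

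There is also a gap at your final step. You assert that the monomial $A_a$-bases of the $F_R^e(M) \otimes_A A_a$ ``can be arranged to nest'' along the transition maps, so that their union gives a basis of the colimit. This is unsupported: the bases supplied by \autoref{thm_gen_grob}(i) at levels $e$ and $e+1$ come from different initial submodules, and the transition map $F_R^e(M) \to F_R^{e+1}(M)$ is the $e$-th Frobenius twist of the root inclusion $M \hookrightarrow F_R(M)$, not a coordinate embedding of monomial complements. Moreover an injection of free $A_a$-modules need not have free cokernel, so a directed union of free modules is not automatically free. The paper's argument is precisely what closes this gap: \autoref{cor_Grob_Frob_pow} also makes each cokernel $F_R^e(Q) \otimes_A A_a \cong (F/N^{[p^e]}) \otimes_A A_a$ free, so each short exact sequence $0 \to F_R^e(M) \otimes_A A_a \to F_R^{e+1}(M) \otimes_A A_a \to F_R^e(Q) \otimes_A A_a \to 0$ splits, and (by induction from the generic freeness of $M$ itself, via \autoref{thm_gen_grob}) the colimit is a directed union along split inclusions of free modules, hence free. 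So while your proposal shares the ``root plus colimit'' skeleton with the paper, the step of passing to $Q$ is the key idea, and without it both the Gr\"obner step and the colimit step remain unproved.
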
  
\begin{proof}
	From \autoref{thm_Lyub_roots}, we can find a root  $M$ of $\lc_I^i(R)$.
	Consider the short exact sequence $0 \rightarrow M \rightarrow F_R(M) \rightarrow Q \rightarrow 0$, with $Q$ some $R$-module.
	Write $Q = F/N$ with $F$ a free $R$-module of finite rank.
	Since $R$ is regular, the functor $F_R$ is exact by Kunz's theorem \cite{KUNZ_FROB}, and so we get a short exact sequence
	$$
	0 \rightarrow F_R^e(M) \rightarrow F_R^{e+1}(M) \rightarrow F_R^e(Q) \cong F/N^{[p^e]} \rightarrow 0
	$$
	for all $e \ge 0$.
	By utilizing  \autoref{thm_gen_grob} and \autoref{cor_Grob_Frob_pow}, we can find $0 \neq a \in A$ such that $M \otimes_A A_a$ and $F / N^{[p^e]} \otimes A_a$ (for all $e \ge 0$) are free $A_a$-modules.
	Therefore, for all $e \ge 0$, 
	$$
	0 \rightarrow F_R^e(M) \otimes_{A} A_a \rightarrow F_R^{e+1}(M) \otimes_{A} A_a \rightarrow F_R^e(Q)  \otimes_{A} A_a \rightarrow 0
	$$
	is a short exact sequence of free $A_a$-modules, and in particular, is $A_a$-split.
	As a consequence, we obtain that $\lc_I^i(R) \otimes_{A} A_a= \lim_\to \left(F_R^e(M) \otimes_{A} A_a\right)$ is a free $A_a$-module.
\end{proof}
  
  \begin{theorem}
  	\label{thm_gen_free_LC_pos}
	Assume \autoref{setup_pos_char} and suppose that ${\rm Reg}(A)$ contains a nonempty open subset of $\Spec(A)$.
	Let $I\subset R$ be an ideal.
	Then there exists a nonzero element $a \in A$ such that $\lc_I^i (R) \otimes_A A_a$ is a free $A_a$-module. 
  \end{theorem}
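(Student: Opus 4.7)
The plan is to use the hypothesis to reduce to \autoref{cor_gen_free_LC_pos}, which already handles the case in which $R$ itself is regular. The extra condition on the regular locus of $A$ is there precisely to allow us to first localize $A$ (and hence $R$) into the regular range.

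First, by hypothesis ${\rm Reg}(A)$ contains a nonempty Zariski open subset of $\Spec(A)$, so there exists a nonzero $a_0 \in A$ with $D(a_0) \subseteq {\rm Reg}(A)$; in other words, $A_{a_0}$ is a regular ring. Since smoothness is preserved under base change, $R \otimes_A A_{a_0}$ is smooth over $A_{a_0}$, and then \autoref{rem_regular_extension} (applied with $A_{a_0}$ in place of $A$) gives that $R \otimes_A A_{a_0}$ is itself regular. By flat base change along $A \to A_{a_0}$, there is a canonical isomorphism
\[
\HH_I^i(R) \otimes_A A_{a_0} \;\cong\; \HH_{I(R \otimes_A A_{a_0})}^i\big(R \otimes_A A_{a_0}\big)
\]
for every $i \ge 0$.

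Next, I would apply \autoref{cor_gen_free_LC_pos} to the regular $A_{a_0}$-algebra $R \otimes_A A_{a_0}$ together with the extended ideal $I(R \otimes_A A_{a_0})$. For each index $i \ge 0$ this produces a nonzero element of $A_{a_0}$, which after clearing a power of $a_0$ from the denominator may be taken to be an element $c_i \in A$, such that $\HH_I^i(R) \otimes_A A_{a_0 c_i}$ is free over $A_{a_0 c_i}$. Because $R$ is Noetherian (being finitely presented over the Noetherian ring $A$) and $I$ is finitely generated, the local cohomology $\HH_I^i(R)$ vanishes for all $i > {\rm ara}(I)$, so only finitely many indices contribute. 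Taking $a = a_0 \cdot \prod_i c_i \in A$ then gives a single nonzero element with the required property for all $i$ simultaneously.

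The argument is essentially a clean base-change reduction, and I do not anticipate a serious obstacle; the only point that requires a brief justification is aggregating the $i$-by-$i$ freeness statements of \autoref{cor_gen_free_LC_pos} into a single element $a$, which follows at once from the finite vanishing range of local cohomology over a Noetherian ring.
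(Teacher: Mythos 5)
Your argument is correct and follows exactly the same route as the paper: localize $A$ into its regular locus via the open-subset hypothesis, use \autoref{rem_regular_extension} to deduce that $R\otimes_A A_{a_0}$ is regular, and then invoke \autoref{cor_gen_free_LC_pos}. The only addition is your explicit aggregation over the finitely many nonvanishing cohomological degrees, a point the paper leaves implicit.
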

\begin{proof}
	The assumption yields a nonzero element $a' \in A$ such that $D(a') \subset {\rm Reg}(A)$,  hence $A_{a'}$ is regular. 
	By \autoref{rem_regular_extension}, $R \otimes_A A_{a'}$ is also regular.
	Then \autoref{cor_gen_free_LC_pos} gives a nonzero element $a'' \in A$ such that $\HH_I^i(R) \otimes_{A} A_{a'a''}$ is a free $A_{a'a''}$-module, and so result holds for $a = a'a'' \in A$.	
\end{proof}

	\begin{remark}
		\label{rem_reg_locus}
		If we assume that $A$ is a finitely generated algebra over $\kk$, from \cite[\href{https://stacks.math.columbia.edu/tag/07PJ}{Tag 07PJ}]{stacks-project} and \cite[\href{https://stacks.math.columbia.edu/tag/07P7}{Tag 07P7}]{stacks-project}, we obtain that
		$$
		\text{Reg}(A) = \big\{ \pp \in \Spec(A) \mid A_\pp \text{ is a regular local ring}\big\}
		$$
		is an open subset of $\Spec(A)$.
		Moreover, as $A$ is a domain, $\text{Reg}(A)$ is non-empty.
	\end{remark}
  
	\subsection{Finiteness of associated primes}  
  
	We apply our result to derive a finiteness property of associated primes 
	for smooth algebras over a regular ring of dimension one. As far as we know this result is new, its analogues hold in mixed characteristic by \cite[Theorem~4.3]{BBLSZ} and in positive characteristic by \cite{Lyubeznik_FMOD}.

  \begin{theorem}\label{thm_finite_ass}
  Let $A$ be a Dedekind domain of characteristic $0$ and $R$ be a smooth $A$-algebra. Then for any ideal $I$ of $R$ and any nonnegative integer $i \ge 0$  the module $\lc_I^i (R)$ has finitely many associated primes. 
  \end{theorem}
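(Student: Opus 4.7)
The strategy combines the generic freeness statement of \autoref{thm_local_cohom} with Lyubeznik's finiteness theorems for smooth algebras over a field (\cite{Lyubeznik_DMOD, Lyubeznik_FMOD}), in the spirit of \cite{BBLSZ}. Since $A$ is a Dedekind domain of characteristic zero, it contains $\QQ$, and \autoref{thm_local_cohom}(a) produces a nonzero $a \in A$ such that $\HH_I^i(R) \otimes_A A_a$ is $A_a$-free. Set $M = \HH_I^i(R)$. Each $P \in \Ass_R(M)$ contracts to some $\pp := P \cap A \in \Spec(A)$, which is either $(0)$ or a maximal ideal, and I will bound each fiber of this contraction map separately.

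For $\pp = (0)$ the primes $P$ correspond bijectively to $\Ass_{R_K}\bigl(\HH^i_{IR_K}(R_K)\bigr)$ with $K = \Quot(A)$; smoothness of $R_K$ over the characteristic-zero field $K$ and \cite{Lyubeznik_DMOD} give the finiteness. For $\pp = \mm$ maximal with $a \notin \mm$ no such $P$ occurs, since any would yield a nonzero element of the $A_\mm$-free module $M \otimes_A A_\mm$ annihilated by a uniformizer $\pi$ of $A_\mm$. The remaining case is $\pp = \mm$ maximal with $a \in \mm$, and only finitely many such $\mm$ arise because $(a)$ has only finitely many prime divisors in the Dedekind domain $A$.

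For a fixed such bad $\mm$, set $R_\mm := R \otimes_A A_\mm$ and let $\pi$ be a uniformizer of $A_\mm$. Smoothness gives $A$-flatness of $R$, so $\pi$ is regular on $R_\mm$; the short exact sequence $0 \to R_\mm \xrightarrow{\pi} R_\mm \to R_\mm/\pi R_\mm \to 0$ induces a long exact sequence in local cohomology in which the $\pi$-torsion submodule $T$ of $\HH^i_I(R_\mm)$ appears as a quotient of $\HH^{i-1}_I(R_\mm/\pi R_\mm)$, and the primes $P$ of interest biject with $\Ass(T)$. Because $\pi \in A$ and $D_{R/A}$ is $A$-linear, $D_{R/A}$ preserves $\pi R_\mm$; the whole long exact sequence becomes $D_{R/A}$-equivariant, and by \autoref{lem_base_change_diff} the action on the quotient factors through $D_{(R/\mm R)/(A/\mm)}$. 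Now $R_\mm/\pi R_\mm \cong R/\mm R$ is smooth over the field $A/\mm$, so depending on whether $\text{char}(A/\mm)$ equals $0$ or a prime $p > 0$, the module $\HH^{i-1}_I(R_\mm/\pi R_\mm)$ is either a holonomic $D$-module by \cite{Lyubeznik_DMOD} or an $F$-finite $F$-module by \cite{Lyubeznik_FMOD}. Since $T$ is a quotient within the respective category --- both of which are closed under quotients and whose objects have finitely many associated primes --- finiteness of $\Ass(T)$ follows.

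The main obstacle is exactly the quotient step: arbitrary $R$-module quotients can acquire new associated primes, so plain finiteness of $\Ass\bigl(\HH^{i-1}_I(R_\mm/\pi R_\mm)\bigr)$ does not suffice by itself. The remedy is to transport the $D$-module (resp.\ $F$-module) structure through the long exact sequence --- using $A$-linearity of $D_{R/A}$ and the base-change isomorphism of \autoref{lem_base_change_diff} --- and to work inside the categories of holonomic $D$-modules or $F$-finite $F$-modules, within which closure under quotients restores the finiteness of associated primes.
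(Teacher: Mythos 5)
Your proof follows essentially the same route as the paper's: invert an element $a$ supplied by \autoref{thm_local_cohom} so that $\HH_I^i(R) \otimes_A A_a$ is $A_a$-free, sort the primes $P \in \Ass_R(\HH^i_I(R))$ by their contractions $\pp = P \cap A$, handle $\pp=(0)$ by tensoring with $\Quot(A)$ and invoking Lyubeznik's $D$-module finiteness over a characteristic-zero field, and handle a maximal $\pp = \mm$ by noting that the associated primes over $\mm$ land in the $\pi$-torsion submodule $T = \IM(d)$, which by the base-change isomorphism \autoref{lem_base_change_diff} is a quotient of the finite-length $D_{(R\otimes_A\kappa(\mm))/\kappa(\mm)}$-module $\HH^{i-1}_I(R\otimes_A\kappa(\mm))$. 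That is precisely the paper's argument.

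Two small remarks. First, the implication ``Dedekind domain of characteristic $0$ implies $\QQ \subset A$'' is false in general (take $A=\ZZ$); the theorem as stated is only addressed by this method in the equal-characteristic-zero case $\QQ\subset A$, with the mixed-characteristic case being the earlier result of \cite{BBLSZ} --- the paper's own proof silently makes the same restriction when it invokes \autoref{thm_gen_free_LC_zero}, so this is a shared imprecision rather than a gap you introduced. Second, you add an alternative branch for ${\rm char}(A/\mm)=p>0$ via $F$-finite $F$-modules. Under $\QQ\subset A$ this branch is vacuous (all residue characteristics are zero), so it causes no harm here; but as written it would not be complete if it were actually needed, because the connecting map $d$ is not an $F$-module morphism --- its source $\HH^{i-1}_I(R\otimes_A\kappa(\mm))$ carries a Frobenius structure while its target $\HH^i_I(R_\mm)$ lives over a mixed-characteristic ring --- so $\ker(d)$ is not obviously an $F$-submodule and the closure-under-quotients argument does not transfer without further work. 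If you want to keep the case split, flag it as vacuous given $\QQ\subset A$. Otherwise drop it; the single $D$-module branch already proves the statement.
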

  \begin{proof}
  By \autoref{thm_gen_free_LC_zero} there is $0 \neq a \in A$ such that $\lc_I^k (R)_a$ is a free $A_a$-module. 
  It follows that $\Ass_A (\lc_I^i (R))$ is finite. 
  Therefore it suffices to show that there are only finitely many $R$-associated primes contracting to a given $A$-associated prime. We proceed as in \cite[Theorem~4.3]{BBLSZ}. 
  
First, there are only finitely many associated primes that contract to $(0) \subset A$ since 
$\lc_I^i (R) \otimes_A \Quot(A) \cong  \lc_I^i (R \otimes_A \Quot(A))$ and 
$R \otimes_A \Quot(A)$ is a regular finitely generated algebra over a field of characteristic $0$ (\cite[Remark~3.7 (i)]{Lyubeznik_DMOD}). Second, for any maximal ideal 
$\pp$ of $A$ the localization $A_\pp$ is a DVR, let $\pi$ be a uniformizer of $A_\pp$. 
The exact sequence 
\[0 \to A_\pp \xrightarrow{\pi} A_\pp \to \kappa(\pp) \to 0\] 
induces the exact sequence
\[\lc_I^{i-1}(R \otimes_A \kappa(\pp)) \xrightarrow{d} \lc_I^{i}(R_\pp) \xrightarrow{\pi} \lc_I^i (R_\pp).
\]
So any associated prime of   $\lc_I^i (R_\pp)$ that contracts to $\pp$ 
must be an associated prime of $\IM(d)$.
By smoothness, $d$ is a map of modules over $D_{R/A} \otimes_A \kappa(\pp) \cong D_{(R \otimes_A k(\pp))/\kappa(\pp)}$. Thus,  because $\lc_I^i (R \otimes_A k(\pp))$ has finite length by \cite{Lyubeznik_DMOD}, 
$\IM(d)$ has finite length as a $D_{(R \otimes_A k(\pp))/\kappa(\pp)}$-module, hence it can only have finitely many associated primes.
  \end{proof}
  
  \section{Examples: specializations of determinantal ideals}
  \label{sect_determinant}
  
By utilizing the methods developed in this paper, we show how to obtain specializations of determinantal ideals that keep many of the original invariants of determinantal ideals.
We chose this example because a description of a Gr\"obner basis is well-known in this case. 
One could use these techniques with any family of ideals for which we have an explicit computation of Gr\"obner bases (e.g., for Schubert determinantal ideals \cite[\S 16.4]{MILLER_STURMFELS}).
We use the following setup throughout.

\begin{setup}
	\label{setup_det}
	Let $A$ be a Noetherian domain containing a field $\kk$ and $R = A\left[x_{i,j} \mid 1 \le i \le m \text{ and } 1 \le j \le n\right]$ be a polynomial ring with $m \le n$.
	For each $1 \le i \le m$ and $1 \le j \le n$, let $a_{i,j} \in A$ be a nonzero element.
	Consider the matrix 
	$$
	M \;=\; \left[
	\begin{array}{cccc}
		a_{1,1}x_{1,1} & a_{1,2}x_{1,2} & \cdots &  a_{1,n}x_{1,n}\\
		a_{2,1}x_{2,1} & a_{2,2}x_{2,2} & \cdots &  a_{2,n}x_{2,n}\\
		\vdots & \vdots & \ddots &  \vdots\\
		a_{m,1}x_{m,1} & a_{m,2}x_{m,2} & \cdots &  a_{m,n}x_{m,n}
	\end{array}  
	\right] \in R^{m \times n}
	$$
	and let $I_t = I_t(M) \subset R$ be the corresponding ideal of $t$-minors, where $1 \le t \le m$.
	Let $[m] = \{1,\ldots,m\}$ and $[n] = \{1,\ldots,n\}$.
	Let $\mathcal{H}$ be the set 
	$$
		\mathcal{H} \;=\; \big\lbrace (i,j) \in [m] \times [n] \mid i+j \le t-1 \big\rbrace  \;\;\bigcup\;\;
		 \big\lbrace (i,j) \in [m] \times [n] \mid   i + j \ge n+m-t+2  \big\rbrace 
	$$
	of positions not in the main antidiagonal of any $t$-minor of the matrix $M$.
\end{setup}

We have the following specific generic freeness result. 

\begin{theorem}
	\label{thm_gen_free_detertminants}
	Assume \autoref{setup_det}.
	Choose $0 \neq a \in \bigcap_{(i,j) \notin \mathcal{H}} \left(a_{i,j}\right) \subset A$.
	Then the following statements hold: 
	\begin{enumerate}[\rm (i)]
		\item $R/I_t \otimes_{A} A_a$ is a free $A_a$-module.
		\item $\HL^i(R/I_t) \otimes_{A} A_a$ is a free $A_a$-module for all $i \ge 0$.
	\end{enumerate}
\end{theorem}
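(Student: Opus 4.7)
The plan is to reduce the theorem to the square-free Gr\"obner degeneration machinery of \autoref{sect_sqr_grob} by identifying, over the localization $A_a$, the initial ideal of $I_t$ with a classical antidiagonal square-free monomial ideal. I would equip $R$ with the standard grading $\deg(x_{i,j})=1$, so that $I_t$ is homogeneous, and fix an antidiagonal monomial order $>$ on $R$ (for instance, a diagonal lexicographic order for which the leading term of each $t$-minor is its antidiagonal term). For the $t$-minor $[I|J]$ indexed by $I=\{i_1<\cdots<i_t\}$ and $J=\{j_1<\cdots<j_t\}$, expansion of the determinant gives
\[
\init_>([I|J]) \;=\; \pm\, c_{I,J}\, \mu_{I,J}, \qquad c_{I,J} \,=\, \prod_{k=1}^{t} a_{i_k,\, j_{t-k+1}}, \qquad \mu_{I,J} \,=\, \prod_{k=1}^{t} x_{i_k,\, j_{t-k+1}}.
\]
Using the inequalities $k \le i_k \le m-t+k$ and $t-k+1 \le j_{t-k+1} \le n-k+1$, every antidiagonal position satisfies $t+1 \le i+j \le m+n-t+1$ and therefore lies outside $\mathcal{H}$. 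By the choice of $a$, each such $a_{i_k, j_{t-k+1}}$ divides $a$, so every factor of $c_{I,J}$, and hence $c_{I,J}$ itself, is a unit in $A_a$. Moreover every $\mu_{I,J}$ is visibly square-free, since no variable $x_{i,j}$ repeats on an antidiagonal.

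The core step would be to establish the equality $\init_>(I_t \otimes_A A_a) = (\mu_{I,J})$ inside $R_a := R \otimes_A A_a$. I would first verify this after base change to $\Quot(A)$: there every $a_{i,j}$ is invertible, so the $\Quot(A)$-algebra automorphism of $R \otimes \Quot(A)$ sending $x_{i,j} \mapsto a_{i,j}^{-1} x_{i,j}$ identifies $I_t(M) \otimes_A \Quot(A)$ with the classical determinantal ideal $I_t(X)$ of the generic matrix $X = (x_{i,j})$. The well-known theorem of Sturmfels (see also Bruns--Conca) that the $t$-minors of a generic matrix form a Gr\"obner basis with antidiagonal initial ideal then transports back to give $\init_>(I_t \otimes \Quot(A)) = (\mu_{I,J})$. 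To descend to $A_a$, the inclusion $(\mu_{I,J}) \subseteq \init_>(I_t \otimes A_a)$ is immediate because $c_{I,J}$ is a unit in $A_a$; for the reverse inclusion, any $w \in I_t \otimes A_a$ has an initial term of the form $b\,\mu$ with $b \in A_a$ and $\mu$ a monomial, and this term agrees with the one computed in $R \otimes \Quot(A)$ since $A_a \hookrightarrow \Quot(A)$ is injective, so $b\,\mu \in (\mu_{I,J}) \cdot \Quot(A)[\xx]$; since $(\mu_{I,J})$ is a monomial ideal, $\mu$ must be divisible by some $\mu_{I,J}$, forcing $b\,\mu \in (\mu_{I,J}) \subset R_a$.

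Once this initial ideal identification is in hand, parts (i) and (ii) are direct consequences of the machinery over the Noetherian domain $A_a$ (which still contains $\kk$). Since the generators of $\init_>(I_t \otimes A_a) = (\mu_{I,J})$ carry unit coefficients, the element required in both theorems may be taken as $1 \in A_a$. Applying \autoref{thm_gen_grob}(i) over $A_a$ gives that $(R/I_t) \otimes_A A_a \cong R_a / (I_t R_a)$ is a free $A_a$-module, settling (i). Since each $\mu_{I,J}$ is square-free, \autoref{thm_sqr_free_deg} applied over $A_a$ yields that $\HL^i\bigl((R/I_t) \otimes_A A_a\bigr)$ is $A_a$-free for every $i \ge 0$; by flat base change of local cohomology along $A \to A_a$ this coincides with $\HL^i(R/I_t) \otimes_A A_a$, giving (ii). The main obstacle here is the second step: pinning down the initial ideal of $I_t$ over $A_a$ as a classical square-free antidiagonal monomial ideal, since the verification requires a clean descent from the generic picture over $\Quot(A)$; after that everything follows mechanically from results already proved in the paper.
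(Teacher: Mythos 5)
Your proposal is correct, and it reaches the same endpoint as the paper (identifying the initial ideal of $I_t$ over $A_a$ with the classical antidiagonal square-free monomial ideal, then invoking \autoref{thm_gen_grob} and \autoref{thm_sqr_free_deg}), but the mechanism you use to pin down that initial ideal differs from the paper's. The paper works via the injective $\kk$-algebra map $\varphi \colon T=\kk[y_{i,j}] \hookrightarrow R$, $y_{i,j} \mapsto a_{i,j}x_{i,j}$, and observes that after localizing at $a$ the initial terms of the minors of $M$ are unit multiples of the antidiagonal monomials; it then deduces that the term-syzygies of these initial terms are generated by images of the divided Koszul relations and checks the Buchberger criterion (\autoref{thm_Buchberger_crit}) by transferring the reductions from $T$ through $\varphi$. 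You instead pass all the way up to the fraction field $\Quot(A)$, where a torus rescaling $x_{i,j}\mapsto a_{i,j}^{-1}x_{i,j}$ identifies $I_t(M)$ with the generic determinantal ideal and the antidiagonal Gr\"obner basis theorem over a field applies verbatim, and then descend the initial-ideal computation to $A_a$ by a clean observation: initial terms are preserved under the injection $A_a\hookrightarrow\Quot(A)$, and since the target ideal is a genuine monomial ideal the $\Quot(A)$-membership of a term $b\mu$ already forces divisibility of $\mu$ by some $\mu_{I,J}$ and hence the $A_a$-membership of $b\mu$. Your route avoids the syzygy bookkeeping in favor of a flat-descent argument and is arguably more transparent; the paper's route stays entirely inside its own Gr\"obner-over-a-ring framework and uses a change of variable that remains valid before localization. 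Both require the classical antidiagonal Gr\"obner basis result as input. (One small cosmetic point: you describe the order as a ``diagonal lexicographic order'' while meaning an antidiagonal-selecting order; the content is right.)
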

 \begin{proof}
	Consider the polynomial ring $T = \kk\left[y_{i,j} \mid 1 \le i \le m \text{ and } 1 \le j \le n\right]$ and the generic $n\times m$ matrix $Y = \left(y_{i,j}\right)$.
	We have an injective $\kk$-algebra map $\varphi : T \hookrightarrow R,\, y_{i,j} \mapsto a_{i,j}x_{i,j}$ such that $I_t = \varphi\left(I_t(Y)\right)R$.
	
	Let $>$ be an antidiagonal monomial order on $Y$ (see \cite[Chapter 4]{BCRM}, \cite[\S 16.4]{MILLER_STURMFELS}).
	We extend the same monomial order to $R$ (in the sense of \autoref{setup_Grobner_deform}) by saying that $x_{i,j} < x_{i',j'}$ if and only if $y_{i,j} < y_{i',j'}$.
	A Gr\"obner basis of $I_t(Y)$ is given by the $t$-minors of $Y$ (see \cite[Chapter 4]{BCRM}, \cite[\S 16.4]{MILLER_STURMFELS}). 
	We substitute $A$ by $A_a$. 
	Let $v_1,\ldots,v_b$ be the $t$-minors of $Y$ and $w_1 = \varphi(v_1),\ldots,w_b = \varphi(v_b)$ be the $t$-minors of $M$.
	The elements $\iniTerm(v_i)$ and $\iniTerm(w_i)$ are generated by the product of the elements in the main antidiagonal of a $t$-minor of the matrices $Y$ and $M$, respectively.
	The syzygies $\Syz(\iniTerm(v_1),\ldots,\iniTerm(v_b))$ are generated by the divided Koszul relations (see \cite[Lemma 15.1]{EISEN_COMM}).
	Since we are assuming that $a_{i,j}$ is invertible when $(i,j) \notin \mathcal{H}$, it follows that the syzygies  $\Syz\left(\iniTerm(w_1),\ldots,\iniTerm(w_b)\right)$ are generated by image of $\Syz(\iniTerm(v_1),\ldots,\iniTerm(v_b))$ under the induced map $\varphi \colon T^b \rightarrow R^b$.
	Therefore, as $v_1,\ldots,v_b$ is a Gr\"obner basis for $I_t(Y)$, by utilizing \autoref{thm_Buchberger_crit} we obtain that $w_1,\ldots,w_b$ is also a Gr\"obner basis for $I_t$.
	
	Part (i) now follows from \autoref{thm_gen_grob}.
	Since each $\iniTerm(w_i)$ is square-free, \autoref{thm_sqr_free_deg} yields the result of part (ii).
 \end{proof}

The next corollary gives a specific locus where a natural specialization behaves just as a generic matrix. 
 
 \begin{corollary}
		Let $S = \kk\left[x_{i,j} \mid 1 \le i \le m \text{ and } 1 \le j \le n\right]$ and $X = (x_{i,j})$ be the $m \times n$ generic matrix.
		Consider the matrix 
		$$
		N \;=\; \left[
		\begin{array}{cccc}
			\beta_{1,1}x_{1,1} & \beta_{1,2}x_{1,2} & \cdots &  \beta_{1,n}x_{1,n}\\
			\beta_{2,1}x_{2,1} & \beta_{2,2}x_{2,2} & \cdots &  \beta_{2,n}x_{2,n}\\
			\vdots & \vdots & \ddots &  \vdots\\
			\beta_{m,1}x_{m,1} & \beta_{m,2}x_{m,2} & \cdots &  \beta_{m,n}x_{m,n}
		\end{array}  
		\right] \in S^{m \times n}
		$$
		where $\beta_{i,j} \in \kk$.
		If $\beta_{i,j} \neq 0$ for all $(i,j) \notin \mathcal{H}$, then the following statements hold:
		\begin{enumerate}[\rm (i)]
			\item $\dim_\kk\left(\left[S/I_t(N)\right]_\mu\right) = \dim_\kk\left(\left[S/I_t(X)\right]_\mu\right)$ for all $\mu \in \ZZ$.
			\item $\dim_\kk\left(\left[\HL^i(S/I_t(N))\right]_\mu\right) = \dim_\kk\left(\left[\HL^i(S/I_t(X))\right]_\mu\right)$ for all $i \ge 0, \mu \in \ZZ$.
		\end{enumerate}
 \end{corollary}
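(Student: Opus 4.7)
The plan is to realize both $S/I_t(X)$ and $S/I_t(N)$ as fibers of a single $A$-free family produced by \autoref{thm_gen_free_detertminants}, so that the Hilbert functions of the fibers are automatically forced to coincide. To this end, I would introduce fresh indeterminates $\alpha_{i,j}$ for $(i,j) \in [m] \times [n]$ and set
\[
A \;=\; \kk\big[\alpha_{i,j} : (i,j) \in [m] \times [n]\big]\big[\alpha_{i,j}^{-1} : (i,j) \notin \mathcal{H}\big],
\]
which is a Noetherian domain containing $\kk$. With $a_{i,j} \coloneqq \alpha_{i,j}$, \autoref{setup_det} is in force: each $a_{i,j}$ is nonzero, and is a unit exactly when $(i,j) \notin \mathcal{H}$. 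Denote the resulting polynomial ring by $R = A[x_{i,j}]$, the matrix by $M$, and the $t$-minor ideal by $I_t \subset R$.

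Because each $a_{i,j}$ with $(i,j) \notin \mathcal{H}$ is a unit in $A$, the intersection $\bigcap_{(i,j) \notin \mathcal{H}}(a_{i,j})$ equals $A$, so \autoref{thm_gen_free_detertminants} applies with $a = 1$. It gives that $R/I_t$ and $\HL^i(R/I_t)$ are $A$-free for all $i \ge 0$; in particular every graded piece $[R/I_t]_\mu$ and $[\HL^i(R/I_t)]_\mu$ is a free $A$-module of some finite rank $r_\mu$, resp.\ $r_{i,\mu}$, depending only on the combinatorial data $(m,n,t)$. Since both $R/I_t$ and its local cohomology modules are $A$-flat, $R/I_t$ is fiber-full over $A$, and \autoref{thm_fib_full_mod} supplies the base change isomorphism
\[
\HL^i(R/I_t) \otimes_A C \;\cong\; \HL^i\big((R/I_t) \otimes_A C\big)
\]
for every $A$-algebra $C$.

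Finally, I would exhibit both sides of the corollary as specializations along the $\kk$-algebra maps $\psi_1, \psi_\beta \colon A \to \kk$ defined by $\alpha_{i,j} \mapsto 1$ and $\alpha_{i,j} \mapsto \beta_{i,j}$ respectively; the map $\psi_\beta$ is well-defined precisely because $\beta_{i,j} \neq 0$ for $(i,j) \notin \mathcal{H}$, which is the entire content of the hypothesis on $N$. Under the identification $R \otimes_A \kk \cong S$, the matrix $M$ specializes to $X$ via $\psi_1$ and to $N$ via $\psi_\beta$, and hence $(R/I_t) \otimes_A \kk$ becomes $S/I_t(X)$, resp.\ $S/I_t(N)$. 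Part (i) then follows from the observation that tensoring the free $A$-module $[R/I_t]_\mu$ of rank $r_\mu$ with $\kk$ along either map yields a $\kk$-vector space of dimension $r_\mu$; part (ii) follows identically by combining the base change isomorphism with the $A$-freeness of $[\HL^i(R/I_t)]_\mu$. The only non-formal ingredient in this plan is the base change for local cohomology, and it is handed over by the fiber-full framework of \autoref{sect_sqr_grob}; once it is in hand, the corollary reduces to a direct specialization argument.
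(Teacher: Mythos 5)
Your proposal is correct and follows essentially the same route as the paper: build a single family over the parameter ring $A$ in the $a_{i,j}$'s, invoke \autoref{thm_gen_free_detertminants} to get $A$-freeness of $R/I_t$ and $\HL^i(R/I_t)$, and then compare the fibers at $a_{i,j}=1$ (giving $X$) and at $a_{i,j}=\beta_{i,j}$ (giving $N$) via the base-change property of fiber-full modules from \autoref{thm_fib_full_mod}. The only difference is cosmetic: the paper takes $A$ to be the full polynomial ring $\kk[a_{i,j}]$ and picks the specialization point $\pp=(a_{i,j}-\beta_{i,j})$ inside the open locus afforded by \autoref{thm_gen_free_detertminants}, whereas you pre-localize $A$ at the $a_{i,j}$ with $(i,j)\notin\mathcal{H}$ so that the theorem applies with $a=1$; both choices work for exactly the same reason, namely that the hypothesis $\beta_{i,j}\neq 0$ for $(i,j)\notin\mathcal{H}$ places the $N$-specialization in the good locus.
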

\begin{proof}
	Here we specify \autoref{setup_det} by setting that $A = \kk[a_{i,j}]$ is a polynomial ring over $\kk$.
	Then $N$ becomes the specialization of $M$, under the natural specialization map $\pi \colon R \rightarrow R \otimes_{A} A/\pp \cong S$, where $\pp = \left(a_{i,j} - \beta_{i,j}\right) \in \Spec(A)$ is a rational maximal ideal in $A$. 
	The result of the corollary follows from \autoref{thm_gen_free_detertminants} and the base change property of fiber-full modules (see \autoref{thm_fib_full_mod}).
\end{proof}

\section*{Acknowledgments}  

We thank Linquan Ma for suggesting \autoref{thm_finite_ass}.
We thank Gennady Lyubeznik for pointing out \cite[Example 3.3]{BBLSZ}.
We thank Wenliang Zhang for discussions. 
We thank the reviewers for carefully reading our paper and for their comments and corrections.

Y.C.R. appreciates the hospitality offered by the Singularity Theory and Algebraic Geometry group at BCAM, where this work was initiated.
 
I.S. was supported by the Spanish Ministry of Science, Innovation and Universities
through the project PID2021-125052NA-I00 and the 
grant RYC2020-028976-I funded by MCIN/AEI/10.13039/501100011033 and by FSE ``invest in your future''.
 
 \bibliography{references}

@Article{Lyubeznik_FMOD,
  author     = {Lyubeznik, Gennady},
  title      = {{$F$}-modules: applications to local cohomology and {$D$}-modules in characteristic {$p>0$}},
  journal    = {J. Reine Angew. Math.},
  year       = {1997},
  volume     = {491},
  pages      = {65--130},
  issn       = {0075-4102},
  doi        = {10.1515/crll.1997.491.65},
  fjournal   = {Journal f\"{u}r die Reine und Angewandte Mathematik. [Crelle's Journal]},
  mrclass    = {13A35 (13D45)},
  mrnumber   = {1476089},
  mrreviewer = {Rodney Y. Sharp},
  url        = {https://doi-org.kuleuven.e-bronnen.be/10.1515/crll.1997.491.65},
}

@incollection{AMJNBSurvey,
	Author = {{\` A}lvarez Montaner, Josep and Jeffries, Jack and N\'{u}\~{n}ez-Betancourt, Luis},
	Booktitle = {Commutative Algebra},
	Pages = {1--76},
	Publisher = {Springer Cham},
	Title = {Bernstein-Sato Polynomials in Commutative Algebra},
	Year = {2022}}

@article {DerkKemp,
    AUTHOR = {Derksen, Harm and Kemper, Gregor},
     TITLE = {Computing invariants of algebraic groups in arbitrary
              characteristic},
   JOURNAL = {Adv. Math.},
  FJOURNAL = {Advances in Mathematics},
    VOLUME = {217},
      YEAR = {2008},
    NUMBER = {5},
     PAGES = {2089--2129},
      ISSN = {0001-8708},
}

@article {MNM,
    AUTHOR = {Mebkhout, Z. and Narv\'{a}ez-Macarro, L.},
     TITLE = {La th\'{e}orie du polyn\^{o}me de {B}ernstein-{S}ato pour les alg\`ebres
              de {T}ate et de {D}work-{M}onsky-{W}ashnitzer},
   JOURNAL = {Ann. Sci. \'{E}cole Norm. Sup. (4)},
  FJOURNAL = {Annales Scientifiques de l'\'{E}cole Normale Sup\'{e}rieure. Quatri\`eme
              S\'{e}rie},
    VOLUME = {24},
      YEAR = {1991},
    NUMBER = {2},
     PAGES = {227--256},
      ISSN = {0012-9593},
}

@article {NB,
    AUTHOR = {N\'{u}\~{n}ez-Betancourt, Luis},
     TITLE = {On certain rings of differentiable type and finiteness
              properties of local cohomology},
   JOURNAL = {J. Algebra},
  FJOURNAL = {Journal of Algebra},
    VOLUME = {379},
      YEAR = {2013},
     PAGES = {1--10},
      ISSN = {0021-8693},
}

@book{EISEN_COMM,
	Author = {David Eisenbud},
	Publisher = {Springer-Verlag},
	Series = {Graduate Texts in Mathematics, 150},
	Title = {Commutative Algebra with a view towards Algebraic Geometry},
	Year = {1995}}

@book{MATSUMURA,
	Author = {Hideyuki Matsumura},
	Edition = {1},
	Publisher = {Cambridge University Press},
	Series = {Cambridge Studies in Advanced Mathematics volume 8},
	Title = {Commutative Ring Theory},
	Year = {1989}}

@book{STURMFELS_GROBNER,
	Author = {Sturmfels, Bernd},
	Pages = {xii+162},
	Publisher = {American Mathematical Society, Providence, RI},
	Series = {University Lecture Series},
	Title = {Gr\"obner bases and convex polytopes},
	Volume = {8},
	Year = {1996}}

@Article{EGAIV_IV,
  author    = {Grothendieck, A.},
  title     = {{\'E}l\'ements de g\'eom\'etrie alg\'ebrique : {IV}. {\'E}tude locale des sch\'emas et des morphismes de sch\'emas, {Q}uatri\`eme partie},
  journal   = {Publications Math\'ematiques de l'IH\'ES},
  year      = {1967},
  volume    = {32},
  pages     = {5-361},
  language  = {fr},
  publisher = {Institut des Hautes \'Etudes Scientifiques},
}

@article{AFFINE_HOPF_I,
	Author = {Heyneman, Robert G. and Sweedler, Moss Eisenberg},
	Fjournal = {Journal of Algebra},
	Journal = {J. Algebra},
	Pages = {192--241},
	Title = {Affine {H}opf algebras. {I}},
	Volume = {13},
	Year = {1969}}

@article{OBERST_NOETH_OPS,
	Author = {Oberst, Ulrich},
	Fjournal = {Journal of Algebra},
	Journal = {J. Algebra},
	Number = {2},
	Pages = {595--620},
	Title = {The construction of {N}oetherian operators},
	Volume = {222},
	Year = {1999}}

@Misc{stacks-project,
  author       = {The {Stacks project authors}},
  title        = {The Stacks project},
  howpublished = {\url{https://stacks.math.columbia.edu}},
  year         = {2023},
}

@Article{BJNB,
  author     = {Brenner, Holger and Jeffries, Jack and N\'{u}\~{n}ez-Betancourt, Luis},
  title      = {Quantifying singularities with differential operators},
  journal    = {Adv. Math.},
  year       = {2019},
  volume     = {358},
  pages      = {106843, 89},
  issn       = {0001-8708},
  doi        = {10.1016/j.aim.2019.106843},
  fjournal   = {Advances in Mathematics},
  mrclass    = {14F10 (13A35 16S32)},
  mrnumber   = {4020453},
  mrreviewer = {Ana Bravo},
  url        = {https://doi-org.kuleuven.e-bronnen.be/10.1016/j.aim.2019.106843},
}

@Article{EGAIV_I,
  author    = {Grothendieck, A.},
  title     = {{\'E}l\'ements de g\'eom\'etrie alg\'ebrique : {IV}. {\'E}tude locale des sch\'emas et des morphismes de sch\'emas, {P}remi\`ere partie},
  journal   = {Publications Math\'ematiques de l'IH\'ES},
  year      = {1964},
  volume    = {20},
  pages     = {5-259},
  language  = {fr},
  publisher = {Institut des Hautes \'Etudes Scientifiques},
}

@Book{BJORK,
  title     = {Rings of differential operators},
  publisher = {North-Holland Publishing Co., Amsterdam-New York},
  year      = {1979},
  author    = {Bj\"{o}rk, J.-E.},
  volume    = {21},
  series    = {North-Holland Mathematical Library},
  pages     = {xvii+374},
}

@Book{MILLER_STURMFELS,
  title     = {Combinatorial commutative algebra},
  publisher = {Springer-Verlag, New York},
  year      = {2005},
  author    = {Miller, Ezra and Sturmfels, Bernd},
  volume    = {227},
  series    = {Graduate Texts in Mathematics},
  pages     = {xiv+417},
}

@Book{HERZOG_HIBI_MONOMIALS,
  title     = {Monomial ideals},
  publisher = {Springer-Verlag London, Ltd., London},
  year      = {2011},
  author    = {Herzog, J\"{u}rgen and Hibi, Takayuki},
  volume    = {260},
  series    = {Graduate Texts in Mathematics},
  pages     = {xvi+305},
}

@Article{Hochster_Roberts_Invariants,
  author     = {Hochster, Melvin and Roberts, Joel L.},
  title      = {Rings of invariants of reductive groups acting on regular rings are {C}ohen-{M}acaulay},
  journal    = {Advances in Math.},
  year       = {1974},
  volume     = {13},
  pages      = {115--175},
  issn       = {0001-8708},
  doi        = {10.1016/0001-8708(74)90067-X},
  fjournal   = {Advances in Mathematics},
  mrclass    = {13H10 (14M15)},
  mrnumber   = {347810},
  mrreviewer = {M. Nagata},
  url        = {https://doi-org.kuleuven.e-bronnen.be/10.1016/0001-8708(74)90067-X},
}

@Book{COUTINHO,
  author     = {Coutinho, S. C.},
  title      = {A primer of algebraic {$D$}-modules},
  publisher  = {Cambridge University Press, Cambridge},
  year       = {1995},
  volume     = {33},
  series     = {London Mathematical Society Student Texts},
  isbn       = {0-521-55119-6; 0-521-55908-1},
  doi        = {10.1017/CBO9780511623653},
  mrclass    = {32C38 (16S32)},
  mrreviewer = {Arno van den Essen},
  pages      = {xii+207},
  url        = {http://dx.doi.org/10.1017/CBO9780511623653},
}

@Article{LYUBEZNIK_BS_POLY,
  author     = {Lyubeznik, Gennady},
  title      = {On {B}ernstein-{S}ato polynomials},
  journal    = {Proc. Amer. Math. Soc.},
  year       = {1997},
  volume     = {125},
  number     = {7},
  pages      = {1941--1944},
  issn       = {0002-9939},
  doi        = {10.1090/S0002-9939-97-03774-X},
  fjournal   = {Proceedings of the American Mathematical Society},
  mrclass    = {16S32},
  mrnumber   = {1372038},
  mrreviewer = {Martin P. Holland},
  url        = {https://doi-org.kuleuven.e-bronnen.be/10.1090/S0002-9939-97-03774-X},
}

@Incollection{LEYKIN_BS_POLY,
  author   = {Leykin, Anton},
  title    = {Constructibility of the set of polynomials with a fixed {B}ernstein-{S}ato polynomial: an algorithmic approach},
  year     = {2001},
  volume   = {32},
  number   = {6},
  pages    = {663--675},
  note     = {Effective methods in rings of differential operators},
  doi      = {10.1006/jsco.2001.0488},
  fjournal = {Journal of Symbolic Computation},
  issn     = {0747-7171},
  journal  = {J. Symbolic Comput.},
  mrclass  = {16S32 (68W30)},
  mrnumber = {1866709},
  url      = {https://doi-org.kuleuven.e-bronnen.be/10.1006/jsco.2001.0488},
}

@Article{KASHIWARA,
  author     = {Kashiwara, Masaki},
  title      = {{$B$}-functions and holonomic systems. {R}ationality of roots of {$B$}-functions},
  journal    = {Invent. Math.},
  year       = {1976/77},
  volume     = {38},
  number     = {1},
  pages      = {33--53},
  issn       = {0020-9910},
  doi        = {10.1007/BF01390168},
  fjournal   = {Inventiones Mathematicae},
  mrclass    = {32C40 (58G15)},
  mrnumber   = {430304},
  mrreviewer = {Helmut Hamm},
  url        = {https://doi-org.kuleuven.e-bronnen.be/10.1007/BF01390168},
}

@Incollection{MALGRANGE,
  author     = {Malgrange, B.},
  title      = {Le polyn\^{o}me de {B}ernstein d'une singularit\'{e} isol\'{e}e},
  booktitle  = {Fourier integral operators and partial differential equations ({C}olloq. {I}nternat., {U}niv. {N}ice, {N}ice, 1974)},
  publisher  = {Springer, Berlin},
  year       = {1975},
  series     = {Lecture Notes in Math., Vol. 459},
  pages      = {98--119},
  mrclass    = {32C40},
  mrnumber   = {0419827},
  mrreviewer = {M. Sebastiani},
}

@Book{ADAMS_LOUST_GROEBNER,
  author     = {Adams, William W. and Loustaunau, Philippe},
  title      = {An introduction to {G}r\"{o}bner bases},
  publisher  = {American Mathematical Society, Providence, RI},
  year       = {1994},
  volume     = {3},
  series     = {Graduate Studies in Mathematics},
  isbn       = {0-8218-3804-0},
  doi        = {10.1090/gsm/003},
  mrclass    = {13P10 (13-01 68Q40)},
  mrnumber   = {1287608},
  mrreviewer = {Joachim Apel},
  pages      = {xiv+289},
  url        = {https://doi-org.kuleuven.e-bronnen.be/10.1090/gsm/003},
}

@Article{KUNZ_FROB,
  author     = {Kunz, Ernst},
  title      = {Characterizations of regular local rings of characteristic {$p$}},
  journal    = {Amer. J. Math.},
  year       = {1969},
  volume     = {91},
  pages      = {772--784},
  issn       = {0002-9327},
  doi        = {10.2307/2373351},
  fjournal   = {American Journal of Mathematics},
  mrclass    = {13.95},
  mrnumber   = {252389},
  mrreviewer = {M. Nagata},
  url        = {https://doi-org.kuleuven.e-bronnen.be/10.2307/2373351},
}

@Book{MASSON,
  author    = {M\'asson, G\'isli},
  title     = {Rings of differential operators and \'etale homomorphisms},
  publisher = {ProQuest LLC, Ann Arbor, MI},
  year      = {1991},
  note      = {Thesis (Ph.D.)--Massachusetts Institute of Technology},
  mrclass   = {Thesis},
  mrnumber  = {2716336},
  pages     = {(no paging)},
  url       = {http://gateway.proquest.com.kuleuven.e-bronnen.be/openurl?url_ver=Z39.88-2004&rft_val_fmt=info:ofi/fmt:kev:mtx:dissertation&res_dat=xri:pqdiss&rft_dat=xri:pqdiss:0571434},
}

@Book{McCONNELL_ROBSON,
  author    = {McConnell, J. C. and Robson, J. C.},
  title     = {Noncommutative {N}oetherian rings},
  publisher = {American Mathematical Society, Providence, RI},
  year      = {2001},
  volume    = {30},
  series    = {Graduate Studies in Mathematics},
  edition   = {Revised},
  isbn      = {0-8218-2169-5},
  note      = {With the cooperation of L. W. Small},
  doi       = {10.1090/gsm/030},
  mrclass   = {16P40 (16-02)},
  mrnumber  = {1811901},
  pages     = {xx+636},
  url       = {https://doi-org.kuleuven.e-bronnen.be/10.1090/gsm/030},
}

@Article{NOETH_OPS,
  author     = {Cid-Ruiz, Yairon},
  title      = {Noetherian operators, primary submodules and symbolic powers},
  journal    = {Collect. Math.},
  year       = {2021},
  volume     = {72},
  number     = {1},
  pages      = {175--202},
  issn       = {0010-0757},
  doi        = {10.1007/s13348-020-00285-3},
  fjournal   = {Collectanea Mathematica},
  mrclass    = {13N10 (13A15)},
  mrnumber   = {4204585},
  mrreviewer = {Cristodor Ionescu},
  url        = {https://doi-org.kuleuven.e-bronnen.be/10.1007/s13348-020-00285-3},
}

@Article{EGA4_II,
  author     = {Grothendieck, A.},
  title      = {\'{E}l\'{e}ments de g\'{e}om\'{e}trie alg\'{e}brique. {IV}. \'{E}tude locale des sch\'{e}mas et des morphismes de sch\'{e}mas. {II}},
  journal    = {Inst. Hautes \'{E}tudes Sci. Publ. Math.},
  year       = {1965},
  number     = {24},
  pages      = {231},
  issn       = {0073-8301},
  fjournal   = {Institut des Hautes \'{E}tudes Scientifiques. Publications Math\'{e}matiques},
  mrclass    = {14.00},
  mrnumber   = {199181},
  mrreviewer = {H. Hironaka},
  url        = {http://www.numdam.org/item?id=PMIHES_1965__24__231_0},
}

@Article{ASZ,
  author     = {Artin, M. and Small, L. W. and Zhang, J. J.},
  title      = {Generic flatness for strongly {N}oetherian algebras},
  journal    = {J. Algebra},
  year       = {1999},
  volume     = {221},
  number     = {2},
  pages      = {579--610},
  issn       = {0021-8693},
  doi        = {10.1006/jabr.1999.7997},
  fjournal   = {Journal of Algebra},
  mrclass    = {16D40 (16S38)},
  mrnumber   = {1728399},
  mrreviewer = {Peter J\o rgensen},
  url        = {https://doi-org.kuleuven.e-bronnen.be/10.1006/jabr.1999.7997},
}

@Article{FIBER_FULL,
  author   = {Cid-Ruiz, Yairon},
  title    = {Fiber-full modules and a local freeness criterion for local cohomology modules},
  journal  = {Math. Z.},
  year     = {2023},
  volume   = {303},
  number   = {2},
  pages    = {30},
  issn     = {0025-5874},
  doi      = {10.1007/s00209-022-03190-6},
  fjournal = {Mathematische Zeitschrift},
  mrclass  = {13D45 (13C10 13D07)},
  mrnumber = {4527971},
  url      = {https://doi-org.kuleuven.e-bronnen.be/10.1007/s00209-022-03190-6},
}

@Article{CONCA_VARBARO,
  author   = {Conca, Aldo and Varbaro, Matteo},
  title    = {Square-free {G}r\"{o}bner degenerations},
  journal  = {Invent. Math.},
  year     = {2020},
  volume   = {221},
  number   = {3},
  pages    = {713--730},
  fjournal = {Inventiones Mathematicae},
}

@Book{CONFERENCE_LEVICO,
  author    = {Polini, Claudia and Raicu, Claudiu and Varbaro, Matteo and Walker, Mark E.},
  title     = {Recent developments in commutative algebra},
  publisher = {Springer, Cham},
  year      = {[2021] \copyright 2021},
  volume    = {2283},
  series    = {Lecture Notes in Mathematics},
  note      = {Edited by Aldo Conca, Srikanth B. Iyengar and Anurag K. Singh, Centro Internazionale Matematico Estivo (C.I.M.E.) Summer Schools},
  pages     = {116},
}

@Article{COHOM_FULL_RINGS,
  author  = {Dao, Hailong and De Stefani, Alessandro and Ma, Linquan},
  title   = {{Cohomologically Full Rings}},
  journal = {International Mathematics Research Notices},
  year    = {2019},
  month   = {10},
  issn    = {1073-7928},
  note    = {rnz203},
  doi     = {10.1093/imrn/rnz203},
}

@Article{KOLLAR_KOVACS,
  author   = {Koll\'{a}r, J\'{a}nos and Kov\'{a}cs, S\'{a}ndor J.},
  title    = {Deformations of log canonical and {$F$}-pure singularities},
  journal  = {Algebr. Geom.},
  year     = {2020},
  volume   = {7},
  number   = {6},
  pages    = {758--780},
  fjournal = {Algebraic Geometry},
}

@Article{FIB_FULL_SCHEME,
  author  = {Cid-Ruiz, Yairon and Ramkumar, Ritvik},
  title   = {The fiber-full scheme},
  journal = {arXiv preprint arXiv:2108.13986},
  year    = {2021},
}

@Article{cid2022local,
  author  = {Cid-Ruiz, Yairon and Ramkumar, Ritvik},
  title   = {A local study of the fiber-full scheme},
  journal = {arXiv preprint arXiv:2202.06652},
  year    = {2022},
}

@Article{BASS_PROJ,
  author     = {Bass, Hyman},
  title      = {Big projective modules are free},
  journal    = {Illinois J. Math.},
  year       = {1963},
  volume     = {7},
  pages      = {24--31},
  issn       = {0019-2082},
  fjournal   = {Illinois Journal of Mathematics},
  mrclass    = {16.90},
  mrnumber   = {143789},
  mrreviewer = {J. P. Jans},
  url        = {http://projecteuclid.org.kuleuven.e-bronnen.be/euclid.ijm/1255637479},
}

@Article{REISNER_CM,
  author     = {Reisner, Gerald Allen},
  title      = {Cohen-{M}acaulay quotients of polynomial rings},
  journal    = {Advances in Math.},
  year       = {1976},
  volume     = {21},
  number     = {1},
  pages      = {30--49},
  issn       = {0001-8708},
  doi        = {10.1016/0001-8708(76)90114-6},
  fjournal   = {Advances in Mathematics},
  mrclass    = {14M05 (13C10)},
  mrnumber   = {407036},
  mrreviewer = {Ezio Stagnaro},
  url        = {https://doi-org.kuleuven.e-bronnen.be/10.1016/0001-8708(76)90114-6},
}

@Book{BCRM,
  author    = {Bruns, Winfried and Conca, Aldo and Raicu, Claudiu and Varbaro, Matteo},
  title     = {Determinants, {G}r{\"o}bner bases and cohomology},
  publisher = {Springer},
  year      = {2022},
}

@Article{kalkbrener1998algorithmic,
  author    = {Kalkbrener, Michael},
  title     = {Algorithmic properties of polynomial rings},
  journal   = {Journal of Symbolic Computation},
  year      = {1998},
  volume    = {26},
  number    = {5},
  pages     = {525--581},
  publisher = {Academic Press},
}

@Article{miller1996analogs,
  author    = {Miller, Lyn J},
  title     = {Analogs of {G}r{\"o}bner bases in polynomial rings over a ring},
  journal   = {Journal of Symbolic Computation},
  year      = {1996},
  volume    = {21},
  number    = {2},
  pages     = {139--153},
  publisher = {Academic Press},
}

@Article{KSMITH,
  author  = {Smith, Karen E.},
  title   = {Local cohomology and base change},
  journal = {J. Algebra},
  year    = {2018},
  volume  = {496},
  pages   = {11--23},
  issn    = {0021-8693},
  doi     = {10.1016/j.jalgebra.2017.09.036},
}

@Article{HOCHSTER_ROBERTS_PURITY,
  author   = {Hochster, Melvin and Roberts, Joel L.},
  title    = {The purity of the {F}robenius and local cohomology},
  journal  = {Advances in Math.},
  year     = {1976},
  volume   = {21},
  number   = {2},
  pages    = {117--172},
  fjournal = {Advances in Mathematics},
}

@Article{kollar2014maps,
  author     = {Koll\'{a}r, J\'{a}nos},
  journal    = {Algebr. Geom.},
  title      = {Maps between local {P}icard groups},
  year       = {2016},
  issn       = {2313-1691},
  number     = {4},
  pages      = {461--495},
  volume     = {3},
  doi        = {10.14231/AG-2016-022},
  fjournal   = {Algebraic Geometry},
  mrclass    = {14C22 (14B12 14F45 14J17 32S30 32S50)},
  mrnumber   = {3549172},
  mrreviewer = {Karl Schwede},
  url        = {https://doi-org.prox.lib.ncsu.edu/10.14231/AG-2016-022},
}

@Article{GEN_FREENESS_LOC_COHOM,
  author   = {Chardin, Marc and Cid-Ruiz, Yairon and Simis, Aron},
  title    = {Generic freeness of local cohomology and graded specialization},
  journal  = {Trans. Amer. Math. Soc.},
  year     = {2022},
  volume   = {375},
  number   = {1},
  pages    = {87--109},
  issn     = {0002-9947},
  doi      = {10.1090/tran/8316},
  fjournal = {Transactions of the American Mathematical Society},
  mrclass  = {13D45 (13A30)},
  mrnumber = {4358663},
  url      = {https://doi.org/10.1090/tran/8316},
}

@Article{KATZAMAN,
  author   = {Katzman, Mordechai},
  title    = {An example of an infinite set of associated primes of a local cohomology module},
  journal  = {J. Algebra},
  year     = {2002},
  volume   = {252},
  number   = {1},
  pages    = {161--166},
  fjournal = {Journal of Algebra},
}

@Article{Lyubeznik_DMOD,
  author     = {Lyubeznik, Gennady},
  title      = {Finiteness properties of local cohomology modules (an application of {$D$}-modules to commutative algebra)},
  journal    = {Invent. Math.},
  year       = {1993},
  volume     = {113},
  number     = {1},
  pages      = {41--55},
  issn       = {0020-9910},
  doi        = {10.1007/BF01244301},
  fjournal   = {Inventiones Mathematicae},
  mrclass    = {13D45 (13H05 14B12)},
  mrnumber   = {1223223},
  mrreviewer = {P. Schenzel},
  url        = {https://doi-org.kuleuven.e-bronnen.be/10.1007/BF01244301},
}

@Article{HAIMAN_STURMFELS,
  author     = {Haiman, Mark and Sturmfels, Bernd},
  title      = {Multigraded {H}ilbert schemes},
  journal    = {J. Algebraic Geom.},
  year       = {2004},
  volume     = {13},
  number     = {4},
  pages      = {725--769},
  issn       = {1056-3911},
  doi        = {10.1090/S1056-3911-04-00373-X},
  fjournal   = {Journal of Algebraic Geometry},
  mrclass    = {14C05 (13D40)},
  mrnumber   = {2073194},
  mrreviewer = {Roy M. Skjelnes},
  url        = {https://doi-org.kuleuven.e-bronnen.be/10.1090/S1056-3911-04-00373-X},
}

@Article{VASCONCELOS_FLATNESS_EFF,
  author     = {Vasconcelos, Wolmer V.},
  title      = {Flatness testing and torsionfree morphisms},
  journal    = {J. Pure Appl. Algebra},
  year       = {1997},
  volume     = {122},
  number     = {3},
  pages      = {313--321},
  issn       = {0022-4049},
  doi        = {10.1016/S0022-4049(97)00062-5},
  fjournal   = {Journal of Pure and Applied Algebra},
  mrclass    = {13C11 (13H05)},
  mrnumber   = {1481094},
  mrreviewer = {Roger A. Wiegand},
  url        = {https://doi-org.kuleuven.e-bronnen.be/10.1016/S0022-4049(97)00062-5},
}

@Article{BBLSZ,
  author     = {Bhatt, Bhargav and Blickle, Manuel and Lyubeznik, Gennady and Singh, Anurag K. and Zhang, Wenliang},
  title      = {Local cohomology modules of a smooth {$\Bbb{Z}$}-algebra have finitely many associated primes},
  journal    = {Invent. Math.},
  year       = {2014},
  volume     = {197},
  number     = {3},
  pages      = {509--519},
  issn       = {0020-9910},
  doi        = {10.1007/s00222-013-0490-z},
  fjournal   = {Inventiones Mathematicae},
  mrclass    = {13D45},
  mrnumber   = {3251828},
  mrreviewer = {Le Xuan Dung},
  url        = {https://doi-org.kuleuven.e-bronnen.be/10.1007/s00222-013-0490-z},
}

\end{document}